\documentclass[10pt]{amsart} % change to \documentclass[a4paper,11pt]{amsart} if necessary. The current one is suitable for legal size paper.
%0.787402 in = 2cm
\usepackage[lmargin=1in,rmargin=1in,tmargin=1in,bmargin=1in]{geometry}
\usepackage[ps,all,arc,rotate]{xy}
\usepackage{graphicx, float, epstopdf}
\usepackage{centernot}
\usepackage{fancyhdr}
\usepackage[utf8]{inputenc}
\usepackage{amsfonts,amssymb,amsmath,amsthm,mathrsfs}
\usepackage{graphics, setspace}
\usepackage[usenames,dvipsnames]{xcolor}
\usepackage{datetime}
\numberwithin{equation}{section}
\numberwithin{figure}{section}
   			% might have to be preceeded by \\ in the previous line (clarity)
\allowdisplaybreaks[4]                        %allows breaking multiline environments in amsmath commands
 
%

% Statements
\newtheorem{lemma}{Lemma}[section]
\newtheorem{theorem}{Theorem}[section]

\newtheorem{conjecture}[lemma]{Conjecture}

\theoremstyle{definition}

\newtheorem{remark}{Remark}[section]
%
% Notation

\newcommand{\R}{\mathbb{R}}

\newcommand{\N}{\mathbb{N}}

\newcommand{\e}{\operatorname{e}}
\newcommand{\real}{\operatorname{Re}}
\newcommand{\imag}{\operatorname{Im}}
\newcommand{\sumtwo}{\operatorname*{\sum\sum}}

\newcommand{\sumdots}{\operatorname*{\sum\cdots\sum}}

\newcommand{\ointdots}{\operatorname*{\oint\cdots\oint}}

\newcommand{\barell}{{\bar \ell}}
\newcommand{\barr}{{\bar r}}

\newcommand{\barq}{{\bar q}}

\newcommand{\barL}{{\bar L}}

\begin{document}
\title[More than five-twelfths of the zeros of $\zeta$ are on the critical line]{More than five-twelfths of the zeros of $\zeta$ \\are on the critical line} 
%%%%%%%%%%%%%%%
%%%%%%%%%%%%%%%
\dedicatory{\emph{Dedicated to Brian Conrey on the occasion of the 30th anniversary of his `Two-fifths' paper.}}
\author{Kyle Pratt}
\address{Department of Mathematics, University of Illinois, 1409 West Green Street, Urbana, IL 61801, United States}
\email{kpratt4@illinois.edu}
%%%%%%%%%%%%%%%
\author{Nicolas Robles}
\address{Department of Mathematics, University of Illinois, 1409 West Green Street, Urbana, IL 61801, United States \textnormal{and} Wolfram Research Inc, 100 Trade Center Dr, Champaign, IL 61820, United States}
\email{nirobles@illinois.edu}
\email{nicolasr@wolfram.com}
%%%%%%%%%%%%%%%
\author{Alexandru Zaharescu}
\address{Department of Mathematics, University of Illinois, 1409 West Green Street, Urbana, IL 61801, United States \textnormal{and} Simion Stoilow Institute of Mathematics of the Romanian Academy, P.O. Box 1-764, RO-014700 Bucharest, Romania}
\email{zaharesc@illinois.edu} 
%%%%%%%%%%%%%%%
\author{Dirk Zeindler}
\address{Department of Mathematics and Statistics, Lancaster University, Fylde College, Bailrigg, Lancaster LA1 4YF, United Kingdom}
\email{d.zeindler@lancaster.ac.uk}
%%%%%%%%%%%%%%%
\subjclass[2010]{Primary: 11M26, 11L07; Secondary: 11M06, 05A18. \\ \indent \textit{Keywords and phrases}: Riemann zeta-function, critical line, zeros, mollifier, incomplete Kloosterman sums, autocorrelation ratios, generalized von Mangoldt functions, convolution structure, Bell diagrams.}
\maketitle
%%%%%%%%%%%%%%%%%%%%%%%%%%%%%%%%%%%%%%%%%%%%%%%%%%%%%%%%%%%%%%%%%%%%%%%%%%%%%%%%%%%%%%%%%%%%%%%%%
\begin{abstract}
The second moment of the Riemann zeta-function twisted by a normalized Dirichlet polynomial with coefficients of the form $(\mu \star \Lambda_1^{\star k_1} \star \Lambda_2^{\star k_2} \star \cdots \star \Lambda_d^{\star k_d})$ is computed unconditionally by means of the autocorrelation of ratios of $\zeta$ techniques from Conrey, Farmer, Keating, Rubinstein and Snaith \cite{cfkrs}, Conrey, Farmer and Zirnbauer \cite{cfz} as well as Conrey and Snaith \cite{cs}.
This in turn allows us to describe the combinatorial process behind the mollification of 
\[
\zeta(s) + \lambda_1 \frac{\zeta'(s)}{\log T} + \lambda_2 \frac{\zeta''(s)}{\log^2 T} + \cdots + \lambda_d \frac{\zeta^{(d)}(s)}{\log^d T},
\]
where $\zeta^{(k)}$ stands for the $k$th derivative of the Riemann zeta-function and $\{\lambda_k\}_{k=1}^d$ are real numbers. Improving on recent results on long mollifiers and sums of Kloosterman sums due to Pratt and Robles \cite{pr01}, as an application, we increase the current lower bound of critical zeros of the Riemann zeta-function to slightly over five-twelfths.
\end{abstract}
%%%%%%%%%%%%%%%%%%%%%%%%%%%%%%%%%%%%%%%%%%%%%%%%%%%%%%%%%%%%%%%%%%%%%%%%%%%%%%%%%%%%%%%%%%%%%%%%%
\tableofcontents
\section{Introduction}
\subsection{Integral moments and autocorrelation ratios of $L$-functions}
Although applications of random matrix theory in number theory started with Montgomery's pair correlation conjecture \cite{montgomery} in the mid 1970's, it is during the last two decades that the use of random matrices as a tool in the study of $L$-functions has become indispensable.\\

A local statistic is a figure that involves exclusively correlations between zeros separated on a scale of a few mean spacings. In \cite{montgomery}, Montgomery conjectured that in the limit of a large height $T$ on the critical line, any local statistic is supplied by the associated statistic for eigenvalues from the Gaussian unitary ensemble (GUE). These conjectures were numerically tested by Odlyzko and found to have incredible agreement \cite{odlyzko}. The leading order of statistics involving zeros of $\zeta$ and statistics involving eigenvalues is identical and asymptotically no factors of arithmetical nature appear \cite[p. 594]{cs}. On the other hand, from the work of Bogomolny and Keating \cite{bogokeat}, it is expected that arithmetical contributions will be significant in the lower order terms.\\

The zeros of $\zeta$ are not the only quantities of interest, naturally one is interested in the zeros of other $L$-functions and their assocations to other types of matrices. Katz and Sarnak \cite{katzsarnak, katzsarnakbull} proposed that local statistics of zeros of families of $L$-functions could be understood by the eigenvalues of matrices coming from classical compact groups, see also the work of Rudnick and Sarnak in \cite{rudnicksarnak}. It is thus believed that families of $L$-functions can be modeled by the characteristic polynomials from such groups. These could be unitary, sympletic or orthogonal. The calculation performed by Iwaniec, Luo and Sarnark \cite{iwaniecluosarnak} for the one-level densities of families of $L$-functions with each symmetry type was in agreement with random matrix theory and further showed that there is no arithmetic component in the leading terms.\\

It is only recently that global, rather local, statistics were contrasted with classical compact groups. A distinct feature of global statistics is that an arithmetical factor \textsl{does} appear in the leading order terms.\\

Keating and Snaith argued in groundbreaking papers \cite{keatingsnaith1, keatingsnaith2} (first conjectured by Conrey and Ghosh \cite{conreyghosh}) that the leading terms of the moments of an $L$-function are the product of a characteristic polynomial from a random matrix and an Euler product. Namely, for the general $2k$ moment we expect that
\begin{align*}
\int_0^T |\zeta(\tfrac{1}{2}+it)|^{2k} dt \sim \frac{a_k g_k}{\Gamma(k^2+1)} T \log^{k^2}T.
\end{align*}
where $a_k$ is the Euler product
\begin{align*}
a_k = \prod_p \bigg( \bigg( 1-\frac{1}{p} \bigg)^{k^2} \sum_{r=0}^\infty \frac{d_k^2(p^r)}{p^r} \bigg) = \prod_p \bigg(1-\frac{1}{p}\bigg)^{k^2} \sum_{k=0}^\infty \binom{k+m-1}{m}^2 p^{-m}.
\end{align*}
The value of $g_k$ is the quantity associated with the eigenvalues of random matrices. Prior to \cite{keatingsnaith1, keatingsnaith2}, Conrey and Ghosh \cite{conreyghosh} had conjectured that $g_3=42$ and that in general $g_k$ is an integer. Conrey and Gonek \cite{conreygonek} later conjectured that $g_4 = 24024$. Using the above mentioned techniques from random matrix theory, Keating and Snaith conjectured a closed formula for $g_k$ which is given by the product
\begin{align*}
g_k = k^2! \prod_{j=0}^{k-1} \frac{j!}{(k+j)!}.
\end{align*}
Proving that the above candidate formula for $g_k$ is an integer is not a trivial matter, \cite[p. 196]{farmer3}.\\

This type of result can be generalized by considering averages of ratios of products of $L$-functions (on the number theoretical side) or of characteristic polynomials (on the random matrix side). These results were mostly established and illustrated, sometimes conjecturally when it comes to the number theoretical aspect, by Conrey, Farmer, Keating, Rubinstein and Snaith \cite{cfkrs}, Conrey, Farmer and Zirnbaheur \cite{cfz}, and Conrey and Snaith \cite{cs} among others.\\

These conjectures on the averages of ratios of products of $\zeta$ functions (autocorrelation of ratios) are useful not only for global statistics of zeros, as one would naturally expect, but also for local statistics. It has been suggested in fact that autocorrelation ratios of characteristic polynomials are more fundamental to random matrices than correlation functions (\cite{borodinstrahov} and \cite[p. 595]{cs}). Thus the same thing could be argued for autocorrelation ratios in the $L$-function universe. The reason why autocorrelation ratios are useful is because they provide many local or global statistic ($n$-level correlations, discrete moments, etc...). Moreover, the ratios conjectures imply Montgomery's pair correlation conjecture \cite[p. 594]{cfz}, and they contain additional information that can be utilized to make precise conjectures about the distributions of zeros of $L$-functions. \\

Autocorrelation ratios with usually one or two $L$-functions in the numerator and in the denominator are enough to cover a very wide spectrum of applications, but there is no limit to the size they can accommodate. A salient feature of this paper is that later we will need many zeta functions in both the numerator and in the denominator and this carries a heavy combinatorial price.
On the other hand, Bogomolny and Keating \cite{bogokeat} needed a heavy duty analysis of the Hardy-Littlewood prime pair conjectures to obtain the lower order terms of local statistics where arithmetical components appear. A nice feature of autocorrelations of ratios of $L$-functions is that they bypass those issues \cite[p. 595]{cs}.\\

Mollifiers are fundamentally important objects in the study of the moments of zeta and their arithmetic consequences. There will be plenty to say about this in a moment, but, roughly speaking, they are used to mine information about small values of $L$-functions, specially zeros, as well as to bound the number of zeros either in a vertical strip to the right of $\real(s) = \frac{1}{2}$ or at $\real(s)=\frac{1}{2}$. Mollifiers have also been employed to extract non-vanishing results at the central point for families of $L$-functions (see, among very many examples, \cite{kpv1, kpv2, milinovich, sound}).\\

Unfortunately, none of these results ever comes cheap. Even the simplest examples require sophisticated and very long analysis and regrettably this paper is no exception. Improvements on the underlying technology have somewhat decreased the length and complexity of the calculations. For example, using the autocorrelation ratios technique, Young \cite{youngshort} was able to shorten Levinson's original proof that more than one third of the zeros of zeta are on the critical line from fifty pages to eight\footnote{At one point in Levinson's original paper there are twenty four cancellations going on simultaneously! See \cite[p. 308]{levinsoncollected} for further details.}. Further refinements on mollifiers still require lengthy calculations, however.\\

What is surprising is that unlike other averages of families considered in \cite[p. 596]{cs}, `there does not seem to be a random matrix analogue of mollifying as there is nothing that naturally corresponds to a partial Dirichlet series'.\\

Before we move on to describe the mechanism of autocorrelation ratios, we mention that the ratios can also be used to study moments of $|\zeta'(\rho+a)|$ and allied quantities. Specifically, Conrey and Snaith showed how to obtain all the other lower order terms for these averages in \cite[$\mathsection$7]{cs}.\\

While difficult computations can be simplified with the autocorrelation ratios, it must be stressed that one needs to assume the Riemann hypothesis (RH), or generalized Riemann hypothesis depending on the $L$-function, and therefore there is a limit to how useful they are. In this paper, we provide unconditional results by using the \textsl{underlying} techniques and ideas behind the autocorrelation of ratios of $\zeta$ but without using the conjectures themselves. This is of particular importance because the application we provide is an improvement on the proportion of zeros on the Riemann zeta-function on the critical line, see Theorem \ref{512}. Naturally, RH cannot be assumed for this type of application. Having said this, it is also important to add that, as an illustrative check, Conrey and Snaith obtained the leading terms of the simplest mollified moment in \cite[$\mathsection$5.1]{cs} as well as higher mollified moments \cite[$\mathsection$6]{cs} under the ratios conjecture (and hence under RH).

\subsection{The ratios conjecture}
Since we are only concerned with the Riemann zeta-function, we need not step outside the unitary family. Assume the Riemann hypothesis, set $s=\frac{1}{2}+it$ and let us follow $\mathsection$2.1 of \cite{cs}. Farmer \cite{farmer1, farmer2} was the first to put forward the asymptotic conjecture
\begin{align*}
R(\alpha,\beta,\gamma,\delta) := \int_0^T \frac{\zeta(s+\alpha)\zeta(1-s+\beta)}{\zeta(s+\gamma)\zeta(1-s+\delta)}dt \sim T \frac{(\alpha+\delta)(\beta+\gamma)}{(\alpha+\beta)(\gamma+\delta)} - T^{1-\alpha-\beta} \frac{(\delta-\beta)(\gamma-\alpha)}{(\alpha+\beta)(\gamma+\delta)}
\end{align*}
as $T \to \infty$, provided that $\real(\gamma), \real(\delta)>0$.

The approximate functional equation states that
\begin{align} \label{AFE}
\zeta(s) = \sum_{n \le X} \frac{1}{n^s} + \chi(s) \sum_{n \le Y} \frac{1}{n^{1-s}} + O(R),
\end{align}
where $R$ is a remainder and $XY=t/(2\pi)$. Now, use \eqref{AFE} for the zeta functions that appear in the numerator of the integrand and use ordinary Dirichlet series for the zeta functions in the denominator ($\frac{1}{\zeta(s)} = \sum \mu(n)n^{-s}$). A rule of thumb (the so-called `recipe' \cite[p. 52]{cfkrs}) tells us we only need to be concerned with the pieces for which there is the same number of $\chi(s)$ and $\chi(1-s)$ due to oscillations. The next step is to integrate term-by-term and keep only the diagonal pieces and complete all the sums that we arrive at. For the first term of \eqref{AFE}, this procedure boils down to
\[
\sum_{hm=kn} \frac{\mu(h)\mu(k)}{m^{1/2+\alpha}n^{1/2+\beta}h^{1/2+\gamma}k^{1/2+\delta}} = \prod_p \sum_{h+m=k+n} \frac{\mu(p^h)\mu(p^k)}{p^{(1/2+\alpha)m+(1/2+\beta)n+(1/2+\gamma)h+(1/2+\delta)k}}.
\]
This is the perennial expression that appears, in some way or another, in all calculations involving autocorrelations of ratios of $L$-functions and it is what allowed Young to simplify Levinson's proof from fifty to eight pages. Now, we only have $0$ and $1$ as possibilities for $h$ and $k$. Thus a simple analysis (it will not be this easy again later) shows that the sum on the right-hand side is equal to
\begin{align*}
\frac{1}{1-\frac{1}{p^{1+\alpha+\beta}}} \bigg(1 - \frac{1}{p^{1+\beta+\gamma}} - \frac{1}{p^{1+\alpha+\delta}}+\frac{1}{p^{1+\gamma+\delta}} \bigg).
\end{align*}
This means that the Euler product on the right-hand side is given by the following ratio of products of $\zeta$
\begin{align*}
\frac{\zeta(1+\alpha+\beta)\zeta(1+\gamma+\delta)}{\zeta(1+\alpha+\delta)\zeta(1+\beta+\delta)} A(\alpha,\beta,\gamma,\delta),
\end{align*}
where $A$ is the `arithmetical factor'
\begin{align*}
A(\alpha,\beta,\gamma,\delta) = \prod_p \frac{(1-\frac{1}{p^{1+\gamma+\delta}})(1-\frac{1}{p^{1+\beta+\gamma}}-\frac{1}{p^{1+\alpha+\delta}}-\frac{1}{p^{1+\gamma+\delta}})}{(1-\frac{1}{p^{1+\beta+\gamma}})(1-\frac{1}{p^{1+\alpha+\delta}})}.
\end{align*}
As will become clearer later in our exposition, the piece from the other term coming from \eqref{AFE} is essentially the same except that $\alpha$ is replaced by $-\beta$, $\beta$ is replaced by $-\alpha$ and it is affected by a multiplication by
\[
\chi(s+\alpha)\chi(1-s+\beta) = \bigg(\frac{t}{2\pi}\bigg)^{-\alpha-\beta} \bigg( 1+O\bigg(\frac{1}{|t|}\bigg)\bigg).
\]
%by the use of Stirling's formula. 
These manipulations allowed Conrey, Farmer and Zirnbaeur \cite{cfz} to obtain a more precise ratios conjecture.

\begin{conjecture}[Conrey, Farmer and Zirnbaeur, 2006] \label{cfz2006conjecture}
If $-\frac{1}{4} < \real(\alpha) < \frac{1}{4}$, $\frac{1}{\log T} \ll \real(\delta) < \frac{1}{4}$ and $\imag(\alpha),\imag(\delta) \ll_\varepsilon T^{1-\varepsilon}$ for every $\varepsilon>0$, then
\begin{align*}
R(\alpha,\beta,\gamma,\delta) &= \int_0^T \frac{\zeta(1+\alpha+\beta)\zeta(1+\gamma+\delta)}{\zeta(1+\alpha+\delta)\zeta(1+\beta+\delta)} A(\alpha,\beta,\gamma,\delta) \\
& \quad + \bigg(\frac{t}{2\pi}\bigg)^{-\alpha-\beta} \frac{\zeta(1-\alpha-\beta)\zeta(1+\gamma+\delta)}{\zeta(1-\beta+\delta)\zeta(1-\alpha+\delta)} A(-\beta,-\alpha,\gamma,\delta)dt + O(T^{1/2+\varepsilon}).
\end{align*}
\end{conjecture}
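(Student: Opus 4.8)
The statement is offered as a \emph{conjecture}, so what I describe below is the heuristic derivation via the CFKRS recipe sketched in the preceding pages, together with an indication of exactly where it falls short of a proof. First I would insert the approximate functional equation \eqref{AFE} for each of the two zeta-functions $\zeta(s+\alpha)$ and $\zeta(1-s+\beta)$ in the numerator, while expanding $1/\zeta(s+\gamma)$ and $1/\zeta(1-s+\delta)$ as the ordinary Dirichlet series $\sum_k \mu(k) k^{-s-\gamma}$ and $\sum_h \mu(h) h^{-1+s-\delta}$. Multiplying everything out produces four families of terms, classified by how many factors $\chi$ they carry; the recipe instructs us to discard the two families in which the number of $\chi(s+\alpha)$ factors differs from the number of $\chi(1-s+\beta)$ factors, on the grounds that these oscillate and contribute only to lower order. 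What remains is the ``principal'' piece with no $\chi$ at all, and the ``dual'' piece carrying exactly $\chi(s+\alpha)\chi(1-s+\beta)$.

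Second, for the principal piece I would integrate term by term in $t$ over $[0,T]$. The off-diagonal terms $hm \ne kn$ oscillate like $(hm/kn)^{-it}$ and are heuristically negligible on average, so only the diagonal $hm=kn$ survives; after the customary step of completing the truncated sums back to infinity it contributes the arithmetic sum
\[
\sum_{hm=kn} \frac{\mu(h)\mu(k)}{m^{1/2+\alpha} n^{1/2+\beta} h^{1/2+\gamma} k^{1/2+\delta}},
\]
which the preceding discussion already factors as an Euler product and identifies with a ratio of four zeta-values times the arithmetic factor $A(\alpha,\beta,\gamma,\delta)$. For the dual piece one repeats the computation, each term now carrying $\chi(s+\alpha)\chi(1-s+\beta) = (t/2\pi)^{-\alpha-\beta}(1+O(1/|t|))$; the net effect is the substitution $\alpha \mapsto -\beta$, $\beta \mapsto -\alpha$ inside the arithmetic factor together with the extra weight $(t/2\pi)^{-\alpha-\beta}$, producing the second term of the conjecture. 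Summing the two contributions and integrating the smooth $t$-weights yields the stated main term; a careful accounting of the truncation errors in \eqref{AFE} and of the tails of the completed sums is what is meant to give the error $O(T^{1/2+\varepsilon})$. All of this is legitimate only in a region where the Dirichlet series converge absolutely, so one finishes by analytically continuing the resulting identity in $\alpha,\beta,\gamma,\delta$ to the ranges $-\tfrac14<\real(\alpha)<\tfrac14$, $\tfrac{1}{\log T}\ll\real(\delta)<\tfrac14$, checking that the poles of the individual zeta-factors cancel against one another.

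The hard part — and the reason this remains a conjecture rather than a theorem — is justifying the passage from ``keep only the diagonal'' to a genuine asymptotic with a power-saving error. The discarded off-diagonal terms are sums of shifted-convolution and incomplete-Kloosterman-sum type, and controlling them unconditionally with an error as small as $T^{1/2+\varepsilon}$ is far beyond present technology; even the heuristic that the $\chi$-unbalanced families are negligible is unproven. This is precisely why, in the body of the paper, we do not attempt to establish the ratios conjecture itself but instead extract only those diagonal contributions that can be handled rigorously — at the cost of a weaker but \emph{unconditional} error term — which is what makes the application to the proportion of zeros of $\zeta$ on the critical line possible without assuming RH.
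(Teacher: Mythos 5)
Your proposal matches the paper exactly: the statement is a conjecture (attributed to Conrey, Farmer and Zirnbauer), and the paper offers no proof of it, only the same recipe heuristic you describe — approximate functional equation in the numerator, Dirichlet series $\sum \mu(n)n^{-s}$ in the denominator, retention of the terms with equal numbers of $\chi(s)$ and $\chi(1-s)$, diagonal extraction and completion of sums giving the zeta-ratio times $A(\alpha,\beta,\gamma,\delta)$, and the dual term via $\alpha\mapsto-\beta$, $\beta\mapsto-\alpha$ with the factor $(t/2\pi)^{-\alpha-\beta}$. Your closing remarks on why the off-diagonal and $\chi$-unbalanced terms cannot be controlled unconditionally, and why the paper therefore uses only the underlying technique rather than the conjecture itself, are likewise consistent with the paper's own discussion.
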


The key to obtaining lower order term in the pair correlations is embedded in the above conjecture. One needs to differentiate with respect to $\alpha$ and $\beta$ and then set $\gamma = \alpha$ and $\delta = \beta$. It is important (and substantially more so later on) to note that $A(\alpha,\beta,\alpha,\beta)=1$. It is also useful to see that
\[
\frac{\partial}{\partial \alpha} \frac{f(\alpha, \gamma)}{\zeta(1-\alpha+\gamma)}\bigg|_{\gamma=\alpha} = -f(\alpha,\alpha).
\]
Unfortunately, we will not have recourse to such neat formulas in our analysis. This differentiation process turns the above into the following.
\begin{theorem}[Conrey and Snaith, 2007]
If Conjecture \textnormal{\ref{cfz2006conjecture}} is true, then
\begin{align*}
\int_0^T &\frac{\zeta'}{\zeta}(s+\alpha)\frac{\zeta'}{\zeta}(1-s+\beta)dt \\
& = \int_0^T \bigg( \bigg(\frac{\zeta'}{\zeta}(1+\alpha+\beta)\bigg)'  + \bigg(\frac{t}{2\pi}\bigg)^{-\alpha-\beta} \zeta(1+\alpha+\beta)\zeta(1-\alpha-\beta) \prod_p \frac{(1-\frac{1}{p^{1+\alpha+\beta}})(1-\frac{2}{p}+\frac{1}{p^{1+\alpha+\beta}})}{(1-\frac{1}{p})^2} \\
& \quad - \sum_p \bigg(\frac{\log p}{p^{1+\alpha+\beta}-1}\bigg)^2 \bigg)dt + O(T^{1/2+\varepsilon}),
\end{align*}
provided that $1/\log T \ll \real(\alpha),\real(\beta)<\frac{1}{4}$.
\end{theorem}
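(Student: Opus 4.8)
The plan is to deduce the formula from Conjecture~\ref{cfz2006conjecture} by differentiating in $\alpha$ and $\beta$ and then setting $\gamma=\alpha$, $\delta=\beta$, along the lines sketched in the introduction. The starting point is the identity
\[
\frac{\partial^2}{\partial\alpha\,\partial\beta}\,\frac{\zeta(s+\alpha)\zeta(1-s+\beta)}{\zeta(s+\gamma)\zeta(1-s+\delta)}\bigg|_{\gamma=\alpha,\ \delta=\beta}=\frac{\zeta'}{\zeta}(s+\alpha)\,\frac{\zeta'}{\zeta}(1-s+\beta),
\]
valid because the numerator separates the two variables and, after differentiation, each surviving denominator factor is evaluated at the same argument as its numerator partner. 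Thus $\partial_\alpha\partial_\beta R(\alpha,\beta,\gamma,\delta)\big|_{\gamma=\alpha,\delta=\beta}$ is the left-hand side of the theorem, and it remains to apply $\partial_\alpha\partial_\beta$ to the right-hand side of Conjecture~\ref{cfz2006conjecture} and specialise there. I would first note that the error term survives: writing $E=R-G-H$ for the difference of $R$ and the two main terms, $E$ is holomorphic on a polydisc of radius $\gg1/\log T$ about the point in question (here one uses $\real(\alpha),\real(\beta)\gg1/\log T$, which is why the theorem's hypothesis is stated as it is), so Cauchy's estimate for the second mixed derivative turns $|E|\ll T^{1/2+\varepsilon}$ into $O(T^{1/2+\varepsilon}\log^2 T)=O(T^{1/2+\varepsilon})$; differentiation under the $t$-integral is justified by holomorphy.

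For the first main term $G(\alpha,\beta,\gamma,\delta)=\dfrac{\zeta(1+\alpha+\beta)\zeta(1+\gamma+\delta)}{\zeta(1+\alpha+\delta)\zeta(1+\beta+\gamma)}\,A(\alpha,\beta,\gamma,\delta)$ I would use logarithmic differentiation, $\partial_\alpha\partial_\beta G=G\big(\partial_\alpha\partial_\beta\log G+\partial_\alpha\log G\,\partial_\beta\log G\big)$, together with $G\big|_{\gamma=\alpha,\delta=\beta}=1$ and $A(\alpha,\beta,\alpha,\beta)=1$. A short computation with the Euler product for $A$ shows that $\partial_\alpha\log A$ and $\partial_\beta\log A$ both vanish on the diagonal $\gamma=\alpha$, $\delta=\beta$; combined with the cancellation of the two $\tfrac{\zeta'}{\zeta}$-terms there, this makes $\partial_\alpha\log G$ and $\partial_\beta\log G$ vanish on the diagonal, so the quadratic cross term drops out. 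In $\partial_\alpha\partial_\beta\log G$ only $\zeta(1+\alpha+\beta)$ depends on both variables, contributing $\big(\tfrac{\zeta'}{\zeta}(1+\alpha+\beta)\big)'$, while $\partial_\alpha\partial_\beta\log A$ on the diagonal evaluates (after writing $p^{-1-\alpha-\beta}(1-p^{-1-\alpha-\beta})^{-1}=(p^{1+\alpha+\beta}-1)^{-1}$) to $-\sum_p\big(\tfrac{\log p}{p^{1+\alpha+\beta}-1}\big)^2$. This produces the first and third terms of the theorem.

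For the dual term, which arises from $G$ by the substitution $\alpha\mapsto-\beta$, $\beta\mapsto-\alpha$ and a factor $(t/2\pi)^{-\alpha-\beta}$, namely
\[
H(\alpha,\beta,\gamma,\delta)=\Big(\tfrac{t}{2\pi}\Big)^{-\alpha-\beta}\frac{\zeta(1-\alpha-\beta)\zeta(1+\gamma+\delta)}{\zeta(1-\beta+\delta)\zeta(1-\alpha+\gamma)}\,A(-\beta,-\alpha,\gamma,\delta),
\]
the new feature is that $H$ has a \emph{double} zero on the diagonal, from the poles of $\zeta$ at $1$ in the two reciprocal zeta-factors: $\zeta(1-\alpha+\gamma)^{-1}$ vanishes at $\gamma=\alpha$ and $\zeta(1-\beta+\delta)^{-1}$ at $\delta=\beta$. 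Writing $H=\zeta(1-\alpha+\gamma)^{-1}\,\zeta(1-\beta+\delta)^{-1}\,N$, and noting that the first reciprocal factor is independent of $\beta$ while the second is independent of $\alpha$, I would expand $\partial_\alpha\partial_\beta H$ by the product rule; after setting $\gamma=\alpha$, $\delta=\beta$ every term in which one of the two reciprocal factors survives undifferentiated vanishes, and invoking the identity $\frac{\partial}{\partial\alpha}\frac{f(\alpha,\gamma)}{\zeta(1-\alpha+\gamma)}\big|_{\gamma=\alpha}=-f(\alpha,\alpha)$ once in $\alpha$ and once in $\beta$ leaves exactly $N(\alpha,\beta,\alpha,\beta)$. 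Since $N$ is never itself differentiated, no factor of $\log(t/2\pi)$ appears. Finally, putting $\gamma=\alpha$, $\delta=\beta$ inside $A(-\beta,-\alpha,\gamma,\delta)$ makes the exponents $1+\beta+\gamma$ and $1+\alpha+\delta$ collapse to $1$, whence $A(-\beta,-\alpha,\alpha,\beta)=\prod_p\frac{(1-p^{-1-\alpha-\beta})(1-2p^{-1}+p^{-1-\alpha-\beta})}{(1-p^{-1})^2}$, so that $N(\alpha,\beta,\alpha,\beta)$ is precisely the second term of the theorem.

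I expect the main obstacle to be bookkeeping rather than any single hard estimate: one must verify that \emph{all} first-order contributions of the arithmetic factor $A$ cancel in the diagonal term, and---more delicately---that in the dual term the order-two vanishing of $H$ is consumed \emph{exactly} by the mixed derivative, so that the limit is finite, carries the stated Euler product, and is genuinely free of logarithmic-in-$t$ terms. The analytic inputs---uniformity of Conjecture~\ref{cfz2006conjecture}, differentiation under the integral, and Cauchy's estimate for the error term---are routine by comparison.
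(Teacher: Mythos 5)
Your proposal is correct and follows essentially the same route the paper indicates for this (quoted) Conrey--Snaith result: apply $\partial_\alpha\partial_\beta$ to Conjecture~\ref{cfz2006conjecture} and set $\gamma=\alpha$, $\delta=\beta$, using $A(\alpha,\beta,\alpha,\beta)=1$ together with the identity $\frac{\partial}{\partial\alpha}\frac{f(\alpha,\gamma)}{\zeta(1-\alpha+\gamma)}\big|_{\gamma=\alpha}=-f(\alpha,\alpha)$ (applied once in $\alpha$ and once in $\beta$) for the dual term. Your added details — the vanishing of $\partial_\alpha\log A$ and $\partial_\beta\log A$ on the diagonal, the evaluation $\partial_\alpha\partial_\beta\log A\big|_{\mathrm{diag}}=-\sum_p\big(\tfrac{\log p}{p^{1+\alpha+\beta}-1}\big)^2$, the collapse $A(-\beta,-\alpha,\alpha,\beta)=\prod_p\frac{(1-p^{-1-\alpha-\beta})(1-2p^{-1}+p^{-1-\alpha-\beta})}{(1-p^{-1})^2}$, and the Cauchy-estimate treatment of the error term — are all accurate fillings-in of that sketch.
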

The last sum over $p$ will appear frequently in the latter sections.\\

One example of moments of logarithmic derivatives is taken from \cite[p. 628]{cfz}. It illustrates the presence of the arithmetical factor. Assuming a variant of the ratios conjecture  one has 
\begin{align*}
\frac{1}{T} \int_0^T \bigg|\frac{\zeta'}{\zeta}\bigg(\frac{1}{2}+r+it\bigg)\bigg|^2 dt &= \bigg(\frac{\zeta'}{\zeta}\bigg)' (1+2r) + \bigg(\frac{T}{2\pi}\bigg)^{-2r} A(-r,-r,r,r) \frac{\zeta(1-2r)\zeta(1+2r)}{1-2r} \\
& \quad + c(r) + O(T^{-1/2+\varepsilon}),
\end{align*}
where $c(r)$ is a function of $r$ which is uniformly bounded for $|r| < 1/4-\varepsilon$ and is given by
\begin{align*}
c(r) = \sum_p \bigg(\frac{-p^{1+2r}\log^2 p}{(p^{1+2r}-1)^2} + \int_0^1 \frac{\log^2 p}{(\e(\theta)p^{1/2+r}-1)^2} d\theta \bigg).
\end{align*}

Oddly enough, although we are working in the context of global statistics and we thus expect arithmetical terms to be present, it so happens that only the simplest ones survive after undergoing a certain combinatorial process. Indeed, the arithmetical factors $A$ and their derivatives, which are sums over primes like the one above, conspire to either become zero or to get absorbed in an error term, thus (luckily) leaving us only with terms for which the arithmetical factor is equal to one.\\

It is important to mention the celebrated `Five authors' (Conrey, Farmer, Keating, Rubinstein, and Snaith) conjecture \cite[p. 44]{cfkrs} regarding the $2k$ moments of zeta. First, we recall the Vandermonde
\begin{align*}
\Delta(z_1, \cdots, z_m) = \prod_{1 \le i<j \le m} (z_j - z_i)
\end{align*}
and the notation $\e(z) = e^{2\pi i z}$.

\begin{conjecture}[CFKRS, 2005] \label{CFKRSconjecture}
Suppose $g(t)$ is a suitable weight function. Then
\begin{align*}
\int_{-\infty}^\infty |\zeta(\tfrac{1}{2}+it)|^{2k} g(t) dt = \int_{-\infty}^\infty P_k \bigg( \log \frac{t}{2 \pi} \bigg) (1+O(t^{-1/2+\varepsilon}))g(t)dt,
\end{align*}
where $P_k$ is a polynomial of degree $k^2$ given by the $2k$-fold residue
\begin{align*}
P_k(x) = \frac{(-1)^k}{(k!)^2} \frac{1}{(2 \pi i)^{2k}} \ointdots \frac{G(z_1, \cdots, z_{2k})\Delta^2(z_1, \cdots, z_{2k})}{\prod_{j=1}^{2k} z_j ^{2k}} e^{(x/2) \sum_{j=1}^k z_j - z_{k+j}} dz_1 \cdots z_{2k},
\end{align*}
where one integrates over small circles about $z_i=0$, with
\begin{align*}
G(z_1, \cdots, z_{2k}) = A_k(z_1, \cdots, z_{2k}) \prod_{i=1}^k \prod_{j=1}^k \zeta(1+z_i-z_{k+j}),
\end{align*}
and $A_k$ is the Euler product
\begin{align*}
A_k(z) = \prod_p \prod_{i=1}^k \prod_{j=1}^k \bigg(1-\frac{1}{p^{1+z_i-z_{k+j}}}\bigg) \int_0^1 \prod_{j=1}^k \bigg(1 - \frac{\e(\theta)}{p^{1/2+z_j}} \bigg)^{-1}  \bigg(1 - \frac{\e(-\theta)}{p^{1/2-z_{k+j}}} \bigg)^{-1} d\theta.
\end{align*}
More generally
\begin{align*}
I_{\zeta,\alpha,g} := \int_{-\infty}^\infty \zeta(\tfrac{1}{2}+\alpha_1+it) \cdots \zeta(\tfrac{1}{2}+\alpha_{2k}+it) g(t) dt = \int_{-\infty}^\infty P_k \bigg( \log \frac{t}{2 \pi}, \alpha \bigg) (1+O(t^{-1/2+\varepsilon}))g(t)dt,
\end{align*}
where
\begin{align*}
P_k(x, \alpha) = \frac{(-1)^k}{(k!)^2} \frac{1}{(2 \pi i)^{2k}} \ointdots \frac{G(z_1, \cdots, z_{2k})\Delta^2(z_1, \cdots, z_{2k})}{\prod_{j=1}^{2k} \prod_{i=1}^{2k} (z_j-\alpha_i)} e^{(x/2) \sum_{j=1}^k z_j - z_{k+j}} dz_1 \cdots z_{2k},
\end{align*}
with the path of integration being small circles surrounding the poles $\alpha_i$.
\end{conjecture}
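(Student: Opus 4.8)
The plan is to establish the formula by running the \emph{recipe} of Conrey, Farmer, Keating, Rubinstein and Snaith \cite{cfkrs}. Since the statement is a conjecture, what this produces is a heuristic derivation; I will indicate at the end precisely which step is the genuine obstruction to rigour. I adopt the convention implicit in the shape of $G$: the first $k$ shifts $\alpha_1,\dots,\alpha_k$ sit on zeta-factors of the form $\zeta(s+\cdot)$ and the remaining $k$ on factors $\zeta(1-s+\cdot)$, with $s=\tfrac12+it$, so that $I_{\zeta,\alpha,g}$ is a unitary-type average to which the ratios philosophy of \cite{cs,cfz} applies verbatim.

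First I would substitute the approximate functional equation \eqref{AFE}, taken with $X=Y=\sqrt{t/2\pi}$, into each of the $2k$ zeta-factors, writing each as a principal Dirichlet sum plus $\chi$ times a dual Dirichlet sum plus $O(R)$, and expand the product. Each resulting main term is indexed by the set $U$ of factors that contributed their dual sum and carries the phase $\prod_{i\in U}\chi(\cdot)$. Using $\chi(\tfrac12+z+it)=(t/2\pi)^{-z-it}e^{i(t+\pi/4)}\bigl(1+O(1/t)\bigr)$ on the $\zeta(s+\cdot)$ factors and its conjugate on the $\zeta(1-s+\cdot)$ factors, the oscillation principle of the recipe discards every $U$ whose net power of the rapidly oscillating $(t/2\pi)^{\pm it}$ is non-zero. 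The survivors are exactly those obtained by choosing a subset $S\subseteq\{1,\dots,k\}$, an equinumerous subset $S'\subseteq\{k+1,\dots,2k\}$, and a bijection $\sigma\colon S\to S'$: such a term amounts to ``swapping'' each $\alpha_j$, $j\in S$, with $-\alpha_{\sigma(j)}$, and is weighted by the smooth factor $(t/2\pi)^{-\sum_{j\in S}(\alpha_j+\alpha_{\sigma(j)})}$.

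Next, for each surviving term I would integrate against $g$, keep only the diagonal of the ensuing multiple Dirichlet series, and complete the truncated sums to infinity. The diagonal sum is multiplicative; its local factor at $p$ is a sum over exponent vectors subject to the diagonal's balance constraint, and — exactly as in the toy computation displayed just before Conjecture \ref{cfz2006conjecture}, only with the exponents $0,1$ replaced by arbitrary nonnegative integers — it telescopes to $\prod_{i}\prod_{j}\bigl(1-p^{-(1+z_i-z_{k+j})}\bigr)$ times the $\theta$-integral appearing in $A_k$. Taking the product over all $p$ reconstitutes $G(z)=A_k(z)\prod_{i,j}\zeta(1+z_i-z_{k+j})$, evaluated at the swapped shifts. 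At this stage $I_{\zeta,\alpha,g}$ has been written as $\int g(t)\,dt$ times a finite sum, over all balanced swaps $(S,S',\sigma)$, of $G$ at the swapped arguments times the corresponding power of $t/2\pi$.

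Finally I would repackage this sum over swaps as the $2k$-fold residue in the statement. The mechanism is purely formal: the integrand $G(z)\Delta^2(z)\big/\prod_{j,i}(z_j-\alpha_i)$ is symmetric separately in $\{z_1,\dots,z_k\}$ and in $\{z_{k+1},\dots,z_{2k}\}$, and its residue when the $z_j$ are dragged onto the poles $\alpha_i$ in a given pairing reproduces exactly one swapped term — the factors $\Delta^2(z)$ and $e^{(x/2)\sum_{j=1}^{k}(z_j-z_{k+j})}$, with $x=\log(t/2\pi)$, supplying the requisite signs and the power of $t/2\pi$; summing over all pairings of the $z$'s with the $\alpha$'s recovers the whole swap sum, and specialising $\alpha_i\to 0$ collapses the denominator to $\prod_j z_j^{2k}$ and yields $P_k(x)$. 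I expect none of these formal steps to be the real difficulty: step four, though lengthy, is the same maneuver that drives every autocorrelation-ratios computation and is by now routine. The hard part — and the reason the statement must be phrased as a conjecture — is the one step I glossed over, namely proving that the discarded off-diagonal terms, together with the error $R$ in \eqref{AFE} and the lower-order contributions of the $\chi$-asymptotics, really do collapse to $O(t^{-1/2+\varepsilon})$ after averaging against $g$. This is known only for $k\le 2$ (Ingham, Heath-Brown, Motohashi); for $k\ge 3$ it is wide open, and sidestepping exactly this analytic obstacle is the whole raison d'\^etre of the autocorrelation-ratios framework.
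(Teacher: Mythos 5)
Your derivation is the standard CFKRS recipe heuristic, and it coincides with the route behind the statement as the paper presents it: the paper records this as a conjecture imported from \cite{cfkrs} (cf.\ the sketch of ``the recipe'' in $\mathsection$1.2) and gives no proof of its own, so there is nothing further to compare against. You are also right to single out the genuine obstruction — justifying that the discarded off-diagonal and oscillatory pieces, together with the error term in \eqref{AFE}, contribute only $O(t^{-1/2+\varepsilon})$ — which is precisely why the statement remains a conjecture for $k\ge 3$.
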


This conjecture displays the rich structure behind the moments of zeta. We shall be needing a special type of moment related to the case $k=1$ for our purposes and we have chosen to write our result (Theorem \ref{theoremmaintermerror47}) in a way that parallels the structure of Conjecture \ref{CFKRSconjecture}.\\

Moreover in \cite{cfkrs, cfz, cs}, a lot of effort is invested in explicating the combinatorial structure of the permutations sums, arithmetical factors as well as double products that appear in certain formulae (notable results in this direction are given by \cite[Lemma 2.5.1, $\mathsection$2.7]{cfkrs}, \cite[$\mathsection$5]{cs} and \cite[$\mathsection$6.4, $\mathsection$7.2]{cfz}). In our findings we also come across formulae and concepts that require a similar effort but for which the existing ideas that have appeared in the literature do not seem to apply directly as far as the enumeration and the combinatorics are concerned.\\

Lastly, in the words of Conrey and Snaith \cite[p. 596]{cs} `before embarking on such a [moment] calculation, it would be useful to know ahead of time what the answer is'. Calling it a `painful calculation', as they do, is nothing short of accurate. The presentation we have decided to adopt follows this philosophy closely. We have started with simple examples where the combinatorics are undemanding so that objects can be counted `by hand' before moving on to the general principles. Even when considering the general principles, we have paused at critical steps to fall back to special cases (which have not have appeared in the literature before) to better illustrate the underlying blueprint of our results.

\subsection{Motivation and choice of the mollifiers}
We set $s = \sigma+it$ with $\sigma, t \in \R$ and denote by $\zeta(s) = \sum_{n\ge 1}n^{-s}$ the Riemann zeta-function for $\sigma>1$, and otherwise by analytic continuation. Now, let
\begin{itemize}
\item $N(T)$ denote the number of zeros $\rho = \beta + i \gamma$, counted with multiplicity, of $\zeta(s)$ inside the rectangle $0 < \beta < 1$ and $0 < \gamma < T$,
\item $N_0(T)$ denote the number of zeros, counted with multiplicity, of $\zeta(s)$ such that $\beta = \tfrac{1}{2}$ and $0 < \gamma < T$.
\end{itemize}
It is well-known \cite[Ch. IX]{titchmarsh} that the asymptotic formula for $N(T)$ is given
\[
N(T) = \frac{T}{2 \pi} \bigg(\log\frac{T}{2\pi}-1\bigg) + \frac{7}{8} + S(T) + O\bigg(\frac{1}{T}\bigg),
\]
where the term $S(T)$ is
\[
S(T) := \frac{1}{\pi} \arg \zeta \bigg(\frac{1}{2}+it\bigg) \ll \log T
\]
as $T \to \infty$. Let $\kappa$ be the proportion of zeros on the critical line, i.e.
\[
\kappa := \liminf_{T \to \infty} \frac{N_0(T)}{N(T)}.
\]
In 1942, Selberg \cite{selberg} showed that $1 \ge \kappa > 0$. This means that a positive proportion of non-trivial zeros of the Riemann zeta-function lies on the critical line.\\

Let $Q(x)$ be a real polynomial satisfying $Q(0)=1$ and $Q'(x)=Q'(1-x)$. Set $d$ to be the degree of $Q$, so that $d = \deg (Q) \ge 1$. We then define the differential operator $V$ by
\[
V(s) := Q \bigg(-\frac{1}{L}\frac{d}{ds}\bigg) \zeta(s),
\]
where, for large $T$, we set
\[
L := \log T.
\]
Using the functional equation of $\zeta(s)$, Littlewood's lemma and the arithmetic and geometric mean inequalities, Conrey \cite{conrey89} (see also \cite{levinson}) showed that
\begin{align} \label{kappa}
\kappa \ge 1 - \frac{1}{R} \log \bigg(\frac{1}{T} \int_1^T |V\psi(\sigma_0+it)|^2 dt \bigg) + o(1).
\end{align}
Here $\sigma_0 = 1/2 - R/L$ where $R$ is a bounded positive real of our choice and $\psi$ is a mollifier.\\

A mollifier is a regular function designed to dampen the large values of $\zeta(s)$ so the product $V\psi$ is expected to be smaller than $V$. To mollify $\zeta$, one uses a Dirichlet polynomial
\[
\psi(s) := \sum_{n \le y} \frac{b(n,y)}{n^s}
\]
with suitable coefficients $b(n,y)$ and an acceptable length $y=T^{\theta}$, where $0 < \theta < 1$.\\

A wide range of coefficients $b(n,y)$ have been studied in the literature. Levinson \cite{levinson} first used
\[
b(n,y) = \mu(n) n^{\sigma_0-1/2} \frac{\log (y/n)}{\log y}
\]
with $\theta = \frac{1}{2}-\varepsilon$. Along with the choice $Q(x)=1-x$ Levinson was able to prove that $\kappa > \frac{1}{3}$ in 1974. \\

In \cite[p. 7]{conreyiwaniec}, a comparison between Selberg's and Levinson's methods is made. Essentially these two methods are `diametrically opposed'. Indeed, Selberg's method is based on counting sign changes of the suitably normalized and mollified Riemann zeta-function and this is a very safe, if not entirely effective, procedure. One cannot get a negative (worse than trivial) bound for the counting number. However, due, among other things, to the fact that the zeros are not evenly spaced, Selberg's method fails to produce significant values of $\kappa$. On the other hand, Levinson's method is a gamble as it could produce negative bounds for the counting number of critical zeros if the pertinent estimates are wasteful. If the mollification is `nearly perfect', then it opens the possibility for $100\%$, or at least substantially higher values of $\kappa$. Therefore, it behooves us to perfect the technique of Levinson's method as much as possible and present it in its greatest flexibility and generality. This is indeed one of the goals of this article.\\

The next refinement is due to Conrey \cite{conrey83a} who further generalized the mollifier to
\begin{align} \label{conreylevinson}
b_C(n,y) = \mu(n) n^{\sigma_0-1/2} P\bigg(\frac{\log (y/n)}{\log y}\bigg)
\end{align}
where $P(x)$ is a real polynomial such that $P(0)=0$ and $P(1)=1$. Combined with further refinements on the polynomial $Q$, such as taking $d= 5$ and keeping $\theta = \frac{1}{2}-\varepsilon$, Conrey showed that $\kappa > 0.36581$ along with other results on the proportion of zeros of derivatives of the Riemann zeta-function on $\real(s) = \frac{1}{2}$.\\

The next improvement would be arithmetical in nature. In \cite{bchb}, Balasubramanian, Conrey and Heath-Brown examined the error terms from the mean value integral in \eqref{kappa} and the resulting exponential sums. In particular they showed using Vaughan's identity \cite{vaughanidentity} on $1/\zeta(s)$ and Weil's bound for Kloosterman sums that one could push the size of the length $\theta$ past the $\frac{1}{2}$ barrier to $\frac{9}{17}$ when the coefficients are given by \eqref{conreylevinson}. The consequences of Hooley's conjecture $R^*$, see \cite{hooley}, are also discussed.\\

Further improvements of this result using \eqref{conreylevinson} are worked out by Conrey in \cite{conrey89}, where he uses results from Deshouillers and Iwaniec \cite{deshouillersIwaniec1, deshouillersIwaniec2} on exponential sums to unconditionally prove that $\theta=\tfrac{4}{7}-\varepsilon$ which results in $\kappa > 0.4088$.\\

Further choices of $b(n,y)$ have been proposed. Following the work of Luo and Yao \cite{luoyao}, Feng \cite{feng} proposed
\[
b_F(n,y_F) = \mu(n) n^{\sigma_0 - 1/2} \sum_{k=2}^K \sum_{p_1 \cdots p_k | n} \frac{\log p_1 \cdots \log p_k}{\log^k y_F} P_k \bigg(\frac{\log (y_F/n)}{\log y_F}\bigg),
\]
where $K=2,3,\cdots$ is an integer of our choice and $y = T^\theta$ for some $0<\theta<1$. The set $\{p_i\}_{i=1}^k$ is composed of distinct primes and $P_k$ are certain polynomials not unlike $P$ above. Working with the two-piece mollifier
\[
\psi(s) = \sum_{n \le y_C} \frac{b_C(n,y)}{n^s} + \sum_{n \le y_F} \frac{b_F(n,y)}{n^s}
\]
where $y_C = T^{\theta_C}$ and $y_F = T^{\theta_F}$ with $\theta_C = \frac{4}{7}-\varepsilon$ and  $\theta_F = \frac{3}{7}-\varepsilon$, Feng proved that $\kappa > 0.4107$. \\

The size of $\theta_F$ was initially taken to be $\frac{4}{7}$ and Feng later reduced it to $\frac{1}{2}$. The proportion of zeros associated to $\theta_F = \frac{1}{2}-\varepsilon$ is $\kappa > 0.4128$. However, in \cite{bui, krz01, pr01, rrz01}, a gap was found in Feng's argument which reduces the size to $\frac{3}{7}$, unless some work is done at the exponential sum level of the error terms. In \cite{pr01}, it is shown, by decomposing the error terms associated to $b_F(n,y)$ into Type I and Type II sums and handling the resulting incomplete Kloosterman sums, that one can take $\theta_F = \frac{6}{11} - \varepsilon$, thereby validating Feng's claim that $\theta_F = \frac{1}{2}-\varepsilon$ and $\kappa > 0.4128$.\\

The best bound for an arbitrary coefficient $a_n$ of a generic Dirichlet series $\sum_{n \le T^{\theta}} a_n n^{-s}$ is $\theta = \frac{17}{33} - \varepsilon$. This is due to Bettin, Chandee and Radziwi\l{}\l{} \cite{bcr}. Its key ingredient is an improvement of a result of Duke, Friedlander and Iwaniec \cite{dfi} on trilinear Kloosterman sums due to Bettin and Chandee \cite{bc}. We also remark that in \cite{rrz02}, Robles and Zaharescu along with Roy proved that the bilinear Kloosterman sums of \cite{dfi} lead to $\theta = \frac{48}{95} - \varepsilon$, but this result was obtained shortly after the publication of \cite{bcr}.\\

Somewhat inspired by \cite{luoyao} and certainly drawing from the autocorrelation of ratios, Bui, Conrey and Young \cite{bcy} introduced a second piece to Conrey's mollifier, namely they worked with
\begin{align} \label{bcy}
\psi(s) = \sum_{n \le y_C} \frac{b_C(n,y)}{n^s} + \chi(s + \tfrac{1}{s}-\sigma_0) \sum_{hk \le y_2} \frac{\mu_2(h)h^{\sigma_0-1/2}k^{1/2-\sigma_0}}{h^sk^{1-s}}P_2 \bigg(\frac{\log (y_2/hk)}{\log y_2}\bigg).
\end{align}
Here $\mu_2$ is given by the Dirichlet convolution $\mu_2(h) = (\mu \star \mu)(h)$ and $\chi(s)$ is such that $\zeta(s) = \chi(s)\zeta(1-s)$, i.e. $\chi(s) = 2^s \pi^{s-1}\sin(\frac{1}{2}\pi s)\Gamma(1-s)$. In this case $y_2 = T^{1/2-\varepsilon}$ and $P_2$ is a polynomial with similar properties to those of $P_1$. They obtained $\kappa > 0.4105$. \\

In \cite[p. 515]{feng} and \cite[p. 36]{bcy} the idea of crossing all mollifiers
\[
\psi(s) = \sum_{n \le y_C} \frac{b_C(n,y)}{n^s} + \sum_{n \le y_F} \frac{b_F(n,y)}{n^s} + \chi(s + \tfrac{1}{s}-\sigma_0) \sum_{hk \le y_2} \frac{\mu_2(h)h^{\sigma_0-1/2}k^{1/2-\sigma_0}}{h^sk^{1-s}}P_2 \bigg(\frac{\log (y_2/hk)}{\log y_2}\bigg)
\]
was remarked and it was hinted at that this would be a `technically difficult' thing to do. This was accomplished in \cite{rrz01}.\\

Lastly, a family of mollifiers that generalizes \eqref{bcy} was studied in \cite{krz02} and independently and almost simultaneously by Sono in \cite{sono}.\\

It was suspected from an argument of Farmer (`$\theta=\infty$ conjecture'), see \cite{farmer1} and \cite[p. 1]{bg}, that mollifiers might be optimal when their size is $1-\varepsilon$ in that they produce $100\%$ of zeros on the critical line. However, another intriguing recent result in this direction is due to Bettin and Gonek \cite{bg}. They prove with a very short and elegant argument involving Mellin transforms and Parseval's formula that if one takes $\theta = \infty$, then RH would follow (not just $100\%$). Of course, we are very far away from such lengths of mollifiers. Nevertheless, theoretically this approach opens the door to a direction towards RH via the moments.\\

In this paper we propose to mollify the whole perturbed Riemann zeta-function. In  other words, we mollify $V(s)$ for a general $d$. As pointed out in the literature, see e.g. \cite[$\mathsection$ 3]{conrey89} and \cite[p. 515]{feng}, the idea behind Selberg's method is to mollify $\zeta(s)$ directly. However, in Levinson's framework, what one needs to mollify is the whole perturbed function $V(s)$. This is not an easy task and one runs into serious combinatorial difficulties. Indeed, as remarked in \cite[Remark (c)]{feng}, `it is too complicated to optimize exactly the coefficients of the mollifier.' It is in fact too complicated to even \textsl{display} the terms of the mollified moment, let alone optimize them.\\

To accomplish this task, we examine the behavior of the inverse $1/V(s)$ as the degree $d$ increases. Namely, we will be studying the expression
\begin{align} \label{Mds}
\mathcal{M}(s,d) := \frac{1}{\zeta(s) + \lambda_1 \frac{\zeta'(s)}{\log T} + \lambda_2 \frac{\zeta''(s)}{\log^2 T} + \cdots + \lambda_d \frac{\zeta^{(d)}(s)}{\log^d T}}
\end{align}
as a function of $d$ and the complex variable $s$. We shall be able to compute the mean value integral appearing in \eqref{kappa} with any desired degree of accuracy in terms of $d$.\\ 

In other words, instead of examining
\begin{align*}
\frac{1}{\zeta(s)} \bigg(\zeta(s) + c_1 \frac{\zeta'(s)}{\log T} + c_2 \frac{\zeta''(s)}{\log^2 T} + \cdots + c_d \frac{\zeta^{(d)}(s)}{\log^d T}\bigg)
\end{align*}
we examine
\begin{align} \label{ratioofzetaderivatives}
\frac{1}{\zeta(s) + \lambda_1 \frac{\zeta'(s)}{\log T} + \lambda_2 \frac{\zeta''(s)}{\log^2 T} + \cdots + \lambda_d \frac{\zeta^{(d)}(s)}{\log^d T}}\bigg(\zeta(s) + c_1 \frac{\zeta'(s)}{\log T} + c_2 \frac{\zeta''(s)}{\log^2 T} + \cdots + c_d \frac{\zeta^{(d)}(s)}{\log^d T}\bigg),
\end{align}
and provide the clarity needed to extract the rich features that this eventual autocorrelation of ratios of products of $\zeta$ functions has to offer. More explicitly, as mentioned in the discussion after Conjecture \ref{CFKRSconjecture} above, the object of study is the $k=1$ case of the autocorrelation functions (sometimes called shifted moments, see \cite[p. 1]{cfkrs2}). In our case, the shifts have their origins in \eqref{kappa}. It will become clearer as we proceed that \eqref{ratioofzetaderivatives} will lead to integrals of a ratios of several products of shifted zeta functions (see \eqref{autocorrelationratio} and \eqref{generaltermsauto} below) such as $G(z_1, \cdots, z_{2k})$ in Conjecture \ref{CFKRSconjecture} and \cite[equation (5.12)]{cfz}.\\

Although computing the moment integrals of the zeta function twisted by a general Dirichlet series associated to the mollification is the main and most difficult target of our research (see Theorem \ref{theoremmaintermerror47}), we can give an immediate application. An interim optimization of the parameters at our disposal yields $\kappa > 0.417293$ and $\kappa^* \ge 0.407511$ where $\kappa^*$ denotes the proportion of simple zeros on the critical line. 

\begin{theorem} \label{512}
More than five-twelfths of the non-trivial zeros of the Riemann zeta-function are on the critical line.
\end{theorem}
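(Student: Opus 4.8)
The plan is to run Levinson's method in the quantitative form \eqref{kappa}: it suffices to produce a mollifier $\psi$ and an admissible bounded $R>0$ for which
\[
\frac{1}{T}\int_1^T |V\psi(\sigma_0+it)|^2\,dt \le e^{R(1-5/12)} + o(1),
\]
since then $\kappa \ge 1 - \frac{1}{R}\log\big(e^{R(1-5/12)}\big) = \tfrac{5}{12}$, with a strict inequality once the free parameters are genuinely optimised. Thus the whole problem reduces to (i) choosing $\psi$, (ii) evaluating the mean-value integral asymptotically, and (iii) optimising. For the mollifier I would take the Dirichlet polynomial obtained by truncating the reciprocal $\mathcal{M}(s,d)$ of \eqref{Mds} to length $y=T^\theta$ — so its coefficients have the shape $(\mu\star\Lambda_1^{\star k_1}\star\cdots\star\Lambda_d^{\star k_d})(n)$ weighted by a polynomial in $\log(y/n)/\log y$ as in \eqref{conreylevinson} — and, to push $\theta$ as far as the error analysis allows, append a Feng-type second piece and a $\chi$-twisted Bui--Conrey--Young piece as in \eqref{bcy}. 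Using the Pratt--Robles treatment of the incomplete Kloosterman sums that arise from the Type I/Type II decomposition of the error terms \cite{pr01}, the Feng piece may be taken of length $\theta = 6/11-\varepsilon$ (and $\theta = 1/2-\varepsilon$ elsewhere) while keeping all remainders $O(T(\log T)^{-A})$.

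Next I would compute the twisted second moment. Writing $V = Q(-\tfrac1L\tfrac{d}{ds})\zeta$ and expanding $|V\psi|^2$, the integral becomes a finite linear combination of shifted moments of the form $\int_1^T \zeta(s+\alpha)\zeta(1-s+\beta)\, A(s)\overline{A(s)}\,dt$, where $A$ ranges over the Dirichlet polynomials built from the mollifier coefficients and differentiation in the shifts $\alpha,\beta$ (followed by letting them tend to $0$) reinstates the derivatives $\zeta^{(j)}$ occurring in $V$. Each such moment is evaluated unconditionally by the \emph{underlying} autocorrelation-ratios machinery rather than the conjectures themselves: after the approximate functional equation \eqref{AFE}, retaining only the diagonal, and completing the sums, the local Euler products telescope into ratios of $\zeta$ exactly as in the computation preceding Conjecture \ref{cfz2006conjecture}, with the arithmetic factors $A$ and their prime sums either vanishing or being absorbed into the error, so that only the pieces with $A\equiv 1$ survive. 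Assembling the resulting permutation/residue sums in a form paralleling Conjecture \ref{CFKRSconjecture} — which here, with many zeta factors in both numerator and denominator, demands the new combinatorial bookkeeping (the Bell-diagram formalism) — yields Theorem \ref{theoremmaintermerror47}: the mean value equals an integral of an explicit polynomial in $\log T$ whose coefficients are polynomials in $d$, the $\lambda_i$, the $c_i$, and the mollifier polynomials, plus a controlled error in the stated range of $\theta$.

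The main obstacle is precisely this step: carrying out the convolution combinatorics for the generalized von Mangoldt coefficients and forcing the error terms past the $\theta=\tfrac12$ barrier via the Kloosterman-sum input. The arithmetic-factor collapse has to be proved in the required generality rather than quoted from the literature, and the Type II ranges have to be arranged so that the bilinear/trilinear estimates of \cite{pr01} apply uniformly over the many convolution pieces.

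Finally, the leading term of $\tfrac1T\int_1^T|V\psi|^2$ is a quadratic functional of the mollifier polynomials depending on the parameters $d$, $\{\lambda_i\}$, $\{c_i\}$, $R$, $\theta$. I would minimise it by the calculus of variations: the associated Euler--Lagrange equations reduce the infinite-dimensional optimisation to one over polynomials of bounded degree, after which a numerical optimisation over all remaining parameters drives the functional strictly below $e^{R(1-5/12)}$. Concretely this yields $\kappa > 0.417293 > 5/12$ (and, by tracking the contribution of simple zeros in parallel, $\kappa^* \ge 0.407511$), and feeding this back into \eqref{kappa} proves the theorem. $\qed$
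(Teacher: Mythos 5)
There is a genuine gap, and it sits exactly at the quantitative heart of the theorem. Your overall framework (Levinson's inequality \eqref{kappa}, a mollifier built from the truncation of $\mathcal{M}(s,d)$ with coefficients $(\mu\star\Lambda_1^{\star k_1}\star\cdots\star\Lambda_d^{\star k_d})(n)$, main terms via the unconditional autocorrelation-ratios machinery, then numerical optimisation) matches the paper. But your error-term input does not: you propose to take the Feng-type piece of length $\theta=\tfrac{6}{11}-\varepsilon$ (quoting the earlier Pratt--Robles Kloosterman analysis) and $\theta=\tfrac12-\varepsilon$ elsewhere. With those lengths the optimisation is already known to top out around $\kappa>0.4128$ (and the crossed three-piece variants of \cite{rrz01} stay in the same range), which is \emph{below} $\tfrac{5}{12}\approx 0.4167$. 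The number $\kappa>0.417293$ you quote at the end is the paper's value, but it is obtained only because the paper first proves that the mollifier with generalized von Mangoldt coefficients may be taken of length $N=T^{4/7-\varepsilon}$ --- the third case of Theorem \ref{meanvalueintegral}. That is the new ingredient your proposal is missing: one must drop the square-free restriction, separate $d$ and $n_1$ in $(\mu\star\Lambda^{\star\ell_1}\star\cdots\star\Lambda_d^{\star\ell_d})(dn_1)$ by factoring $n_1\to hn_1$ with $h\mid d^\infty$ and writing each $\Lambda_k$ as a combination of $\log^{j}\Lambda$-convolutions, then run Heath-Brown's identity to reach Type I/Type II sums, bound Type I by Weil's bound and Type II by Deshouillers--Iwaniec, balancing with $W=U^{1/4}$. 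Without this $\tfrac{4}{7}$ length (the same length as the plain Conrey mollifier), the final step of your argument --- "numerical optimisation drives the functional strictly below $e^{R(1-5/12)}$" --- is a non sequitur given your stated inputs.

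Two secondary points. First, appending the $\chi$-twisted Bui--Conrey--Young piece \eqref{bcy} on top of the generalized Feng mollifier is not what the paper does; crossing those pieces is flagged in $\mathsection 9$ as technically difficult future work, and the theorem is proved with the single mollifier \eqref{fengsmollifierlogp} at $d=1$, $K=3$, $\theta=\tfrac47-\varepsilon$, $R=1.3036$ and explicit polynomials $P_1,P_2,P_3,Q$. Second, the main-term evaluation is not a matter of "retaining only the diagonal and completing the sums" in the conjectural recipe: the paper works unconditionally, starting from the twisted second moment of Theorem \ref{meanvalueintegral}, using Cauchy's integral formula to linearise the logarithms, an Euler-product computation of the nine local cases, the vanishing/absorption of the derivatives of the arithmetical factor $A_{\alpha,\beta}$ at $\mathbf{z}=\mathbf{w}=\mathbf{0}$, $s=\alpha$, $u=\beta$, and then the contour Lemma \ref{contourlemma} and the Euler--Maclaurin Lemma \ref{eulermaclaurinlemma} to produce the explicit main terms of Theorem \ref{theoremmaintermerror47}; your sketch gestures at this correctly in spirit, but the arithmetic-factor collapse and the separation of $s$ and $u$ are proved there, not assumed.
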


One of the most satisfactory features of the result we present for the twisted second moment is the amount of adjustability that it exhibits. Therefore we expect computational enthusiasts to push the boundaries of the terms we present. Moreover, if or when the length of present or future mollifiers is increased, number theorists should be able to use the theoretical and numerical procedures we present in this article to refine $\kappa$ or other arithmetical quantities of interest (see e.g. \cite{cs} for applications of autocorrelation ratios in pair correlations, distributions, discrete moments, connections to random matrix theory, etc). We end our discussion with some problems for future work in $\mathsection$9.

\begin{remark}
Shortly before we presented this paper, Wu \cite{wu} uploaded his result on the twisted mean square and the critical zeros of Dirichlet $L$-functions. Our conclusions partially overlap and the methodologies are independent of each other. Our bounds for the critical zeros (of Riemann or Dirichlet) are in agreement.
\end{remark}

\section{Preliminary tools}
We shall devote this section to presenting the tools we will need throughout the paper. Let $\nu(n)$ denote the number of distinct primes of $n$. We use $n=p_1p_2 \cdots p_r$ to denote the prime factorization of a general square-free number. If $n$ is a square-free number, then $\nu(n)=r$ and $\mu(n)=(-1)^r$.

The generalized von Mangoldt function $\Lambda_k(n)$ is defined as (see \cite{ivicmangoldt, rr02} and \cite{cmrz} for applications to zeros)
\[
\Lambda_k(n) := (\mu \star \log^k)(n)
\]
for $k \in \N$. If $k=1$, then we have $\Lambda_1(n) = \Lambda(n)$, the usual von Mangoldt function. For $\real(s)>1$, its Dirichlet series is given by
\[
\frac{\zeta^{(k)}}{\zeta}(s) = (-1)^k \sum_{n=1}^\infty \frac{\Lambda_k(n)}{n^s},
\]
where $\zeta^{(k)}$ stands for the $k$the derivative of $\zeta(s)$ with respect to $s$. We also note the following identity
\[
\frac{d}{ds} \bigg(\frac{\zeta^{(k)}}{\zeta}(s)\bigg) = \frac{\zeta^{(k+1)}}{\zeta}(s) - \frac{\zeta'}{\zeta}(s)\frac{\zeta^{(k)}}{\zeta}(s).
\]
Arithmetically, this means that
\begin{align} \label{arithmeticLambdaK}
\Lambda_{k+1}(n) = \Lambda_k(n) \log(n) + (\Lambda \star \Lambda_k)(n).
\end{align}
Moreover, for $\real(s)>1$, we can write
\[
\frac{d^{m-1}}{ds^{m-1}} \frac{\zeta'}{\zeta}(s) = (-1)^m \sum_{n=1}^\infty \frac{\Lambda(n)\log^{m-1}n}{n^s} = (-1)^m \sum_{n=1}^\infty \frac{\Lambda_{\mathcal{L},m-1}(n)}{n^s}
\]
where
\begin{equation}
\Lambda_{\mathcal{L},k}(n) := \Lambda(n) \log^k n = 
\begin{cases}
\ell^k \log^{k+1} p, & \mbox{ if $n=p^\ell$  for some prime $p$ and positive integer $\ell$}, \\
0, & \mbox{ otherwise}. \nonumber
\end{cases}
\end{equation}
The advantage of working with $\Lambda_{\mathcal{L},k}$ instead of $\Lambda_k(n)$ is that $\Lambda_{\mathcal{L},k}$ will be zero when $n$ is not a power of a prime, whereas this is certainly not the case for the much more combinatorially complicated arithmetical function $\Lambda_k(n)$.

We denote by $\mathcal{P}(k)$ the representation of unordered partitions of $k$ into positive parts \cite[p. 14]{knafo}, i.e.
\[
\mathcal{P}(k) := \bigg\{ (\vartheta_1, \vartheta_2, \cdots, \vartheta_k) \quad \textnormal{such that} \quad \vartheta_1, \vartheta_2, \cdots, \vartheta_k \ge 0 \quad \textnormal{and} \quad \sum_{i=1}^k i\vartheta_i = k\bigg\},
\]
and $\mathcal{C}(k,n)$ stands for the ordered partitions of the integer $k$ into $n$ nonnegative parts, i.e.
\[
\mathcal{C}(k,n) := \bigg\{ (\lambda_1, \lambda_2, \cdots, \lambda_n) \quad \textnormal{such that} \quad \lambda_1, \lambda_2, \cdots, \lambda_n \ge 0 \quad \textnormal{and} \quad \sum_{i=1}^n \lambda_i = k \bigg\}.
\]
Moreover, we also define
\[
 \mathcal{C}^*(n,m) := \bigg\{ (k_1, k_2, \cdots, k_m) \quad \textnormal{such that} \quad k_1, k_2, \cdots, k_m > 0 \quad \textnormal{and} \quad \sum_{i=1}^m k_i = n \bigg\}.
\]
The multinomial coefficients are given by
\[
\binom{n}{k_1, k_2, \cdots, k_m} = \frac{n!}{k_1! k_2! \cdots k_m!}.
\]
%
%The indicator function is $\mathbf{1}_{\alpha \in A} = 1$ if $\alpha \in A$ and $0$ otherwise.
%\begin{equation}
%\mathbf{1}_{\alpha \in A} := 
%\begin{cases}
%1, & \mbox{ if $\alpha \in A$},\\
%0, & \mbox{ otherwise}, \nonumber
%\end{cases}
%\end{equation}
%has its usual meaning.

An identity involving $\Lambda_k$ that can occasionally become useful is 
\begin{align} \label{Lambdaidentity}
\Lambda_k(n) = \sum_{(i_1, \cdots, i_r) \in \mathcal{C}^*(k,r)} \binom{k}{i_1, \cdots, i_r} \log^{i_1} p_1 \cdots \log^{i_r} p_r
\end{align}
for square-free $n$ as described earlier.\\

The polynomials $B_{n,k}(x_1, x_2, \cdots, x_{n-k+1})$ denote the partial or incomplete exponential Bell polynomials, whereas $B_n(x_1, x_2, \cdots, x_n)$ will denote the $n$th complete exponential Bell polynomials. A good introduction to these combinatorial objects can be found for instance in \cite[$\mathsection$ 3.3]{comtet}. For the sake of completeness, and given the role they will play shortly, we shall define and illustrate their main properties.

The Bell polynomials are defined by
\[
B_{n,k}(x_1, x_2, \cdots, x_{n-k+1}) := \sum_{\divideontimes(k,n)} \frac{n!}{j_1! j_2! \cdots j_{n-k+1}!} \bigg(\frac{x_1}{1!}\bigg)^{j_1}\bigg(\frac{x_2}{2!}\bigg)^{j_2} \cdots \bigg(\frac{x_{n-k+1}}{(n-k+1)!}\bigg)^{j_{n-k+1}}.
\]
where $\divideontimes(k,n)$ indicates that the sum is over $\{j_1, \cdots, j_{n-k+1}\} \in \mathcal{C}(k,n) \cap \mathcal{P}(k)$. The complete exponential Bell polynomials are given by the sum
\[
B_n(x_1, x_2, \cdots, x_n) = \sum_{k=1}^n B_{n,k}(x_1, x_2, \cdots, x_{n-k+1}).
\]
The partial Bell polynomials can be computed efficiently by a recursion  relation
\[
B_{n,k} = \sum_{i=1}^{n-k+1} \binom{n-1}{i-1}x_i B_{n-i,k-i},
\]
where $B_{0,0}=1$, $B_{n,0}=0$ for all $n \ge 1$ and $B_{0,k}=0$ for $k \ge 1$. When it comes to the complete Bell polynomials we have the recursion 
\[
B_{n+1}(x_1, x_2, \cdots, x_{n+1}) = \sum_{i=0}^n \binom{n}{i} B_{n-i}(x_1, x_2, \cdots, x_{n-i})x_{i+1},
\]
with $B_0=1$. Lastly, the generating function is given by
\[
\exp \bigg(\sum_{i=0}^\infty \frac{x_i}{i!} t^i \bigg) = \sum_{n=0}^\infty \frac{1}{n!}B_n(x_1, x_2, \cdots, x_n)t^n.
\]

The exponential Bell polynomial encodes the information related to the ways a set can be partitioned, and given a Bell polynomial $B_n$ we can separate the partial Bell polynomial $B_{n,k}$ by collecting all those monomials with degree $k$. We shall now illustrate some examples.\\

Let us for instance take $B_{3,k}(x_1, x_2, x_3)$. We immediately find
\begin{align*}
B_{3,1}(x_1, x_2, x_3) &= x_3, \quad B_{3,2}(x_1, x_2, x_3) = 3x_1x_2, \quad \textnormal{and} \quad B_{3,3}(x_1, x_2, x_3) = x_1^3,
\end{align*}
which we can represent pictorially as in \cite{dickau, weinsstein}:
\begin{figure}[H]
	\centering
	\includegraphics[scale=0.9]{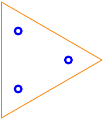}
	\includegraphics[scale=0.9]{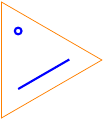}
	\includegraphics[scale=0.9]{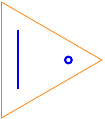}
	\includegraphics[scale=0.9]{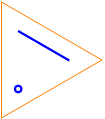}
	\includegraphics[scale=0.9]{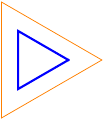}
%	\hspace{5mm}
	\caption{$B_{3,3}(x_1,x_2,x_3)$ (extreme left), $B_{3,2}(x_1,x_2,x_3)$ (3 middle diagrams) and $B_{3,1}(x_1,x_2,x_3)$ (extreme right).}
\end{figure}
In this case, $x_1$ indicates the presence of a block with a single element, $x_2$ the presence of a block with two elements and $x_3$ a block with three elements. Since the coefficient of $B_{3,2}$ is 3, we obtain three different ways of partitioning a block of 3 elements into 2 blocks, one block of 1 element and one block of 2 elements. We also note that $B_3(1,1,1)=B_3=5$, which is the Bell number associated to $3$. This represents the total number of diagrams.\\

Similarly, if we now consider $B_{4,k}(x_1,x_2,x_3,x_4)$ for $k=1,2,3,4$ then we obtain
\begin{align*}
B_{4,1}(x_1, x_2, x_3, x_4) = x_4, &\quad B_{4,2}(x_1, x_2, x_3, x_4) = 3x_2^2 + 4x_1x_3, \\
B_{4,3}(x_1, x_2, x_3, x_4) = 6x_1^2x_2, &\quad B_{4,4}(x_1, x_2, x_3, x_4) = x_1^4.
\end{align*}
\begin{figure}[H]
	\centering
	\includegraphics[scale=0.9]{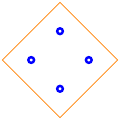}
	\includegraphics[scale=0.9]{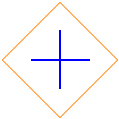}
	\includegraphics[scale=0.9]{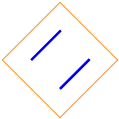}
    \includegraphics[scale=0.9]{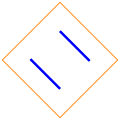}
	\includegraphics[scale=0.9]{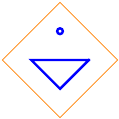}
	\includegraphics[scale=0.9]{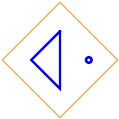}
	\includegraphics[scale=0.9]{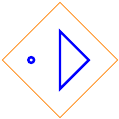}
	\includegraphics[scale=0.9]{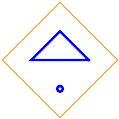}
	\includegraphics[scale=0.9]{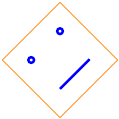}
    \includegraphics[scale=0.9]{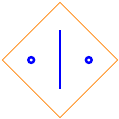}
    \includegraphics[scale=0.9]{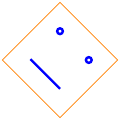}
   	\includegraphics[scale=0.9]{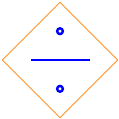}
   	\includegraphics[scale=0.9]{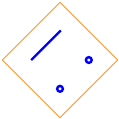}
    \includegraphics[scale=0.9]{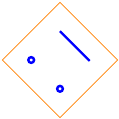}
    \includegraphics[scale=0.9]{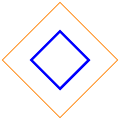}
%	\hspace{5mm}
	\caption{$B_{4,k}(x_1,x_2,x_3,x_4)$ for $k=1,2,3,4$.}
\end{figure}
We remark again that the total number of diagrams, or Bell number, is $B_4=15$. With these tools in mind, let us now proceed.

\section{Constructing a mollifier}
In this section we will present the ideas behind the construction of a mollifier and we will capitalize on how they have been constructed up until now before we explain the approach we have taken.

\subsection{The zeroth order case $d=0$.} In this case, going back to \eqref{Mds} with $d=0$, one simply has
\[
\mathcal{M}(s,0) = \frac{1}{\zeta(s)} = \sum_{n=1}^\infty \frac{\mu(n)}{n^s}
\]
from which we get a mollifier of the form
\begin{align} \label{psi0}
\psi_{d=0}(s) = \sum_{n \le y_0} \frac{\mu(n)n^{\sigma_0-1/2}}{n^s} P_0\bigg(\frac{\log (y_0/n)}{\log y_0}\bigg).
\end{align}
When we impose that the cutoffs on the polynomial $P_0$, i.e. $P_0$ be such that $P_0(0)=0$ and $P_0(1)=1$, we then see that this is in agreement with \eqref{conreylevinson}.

\subsection{The linear case $d=1$.} Now we have to deal with the first derivative. This is the case contemplated by Feng \cite{feng}. Going back to \eqref{Mds} we formally get
\begin{align} \label{truncationd1}
\mathcal{M}(s,1) &= \frac{1}{\zeta(s)+\frac{\zeta'(s)}{\log T}} = \frac{1}{\zeta(s)} \bigg(1+\frac{1}{\log T}\frac{\zeta'}{\zeta}(s)\bigg)^{-1} \nonumber \\
& = \sum_{k=0}^\infty (-1)^k \frac{1}{\log^k T}\frac{1}{\zeta(s)} \bigg(\frac{\zeta'}{\zeta}(s)\bigg)^k  = \sum_{k=0}^\infty \frac{1}{\log^k T}\frac{1}{\zeta(s)} \bigg(-\frac{\zeta'}{\zeta}(s)\bigg)^k \nonumber \\
& = \sum_{k=0}^\infty \frac{1}{\log^k T} \sum_{n=1}^\infty \frac{(\mu \star \Lambda^{\star k})(n)}{n^s},
\end{align}
by the use of the binomial theorem for fractional powers
\[
(1+x)^{-1} = \sum_{k=0}^\infty \binom{-1}{k}x^k = \sum_{k=0}^\infty (-1)^k x^k.
\]
Here $\Lambda^{\star k}$ stands for convolving $\Lambda$ with itself exactly $k$ times. Suppose that $n$ is square-free, then
\begin{align} \label{d1convolution}
(\mu \star \Lambda^{\star k})(n) &= \sum_{d_0 d_1 \cdots d_k = n} \mu(d_0) \Lambda(d_1) \cdots \Lambda(d_k) = \sum_{\operatorname{cyclic}} \Lambda(p_1) \cdots \Lambda(p_k) \mu(p_{k+1} \cdots p_r) \nonumber \\
&= (-1)^{r+k} \sum_{p_1 \cdots p_k | n} \log p_1 \cdots \log p_k = (-1)^k \mu(n) \sum_{p_1 \cdots p_k | n} \log p_1 \cdots \log p_k.
\end{align}
According to Feng's conjecture (\cite[p. 516]{feng}), if $n$ had had a square divisor, then the coefficients coming from \eqref{truncationd1} would contribute a lower order term to the mean value integrals $\int |V\psi(\sigma_0+it)|^2 dt$. This means that we could simply ignore the $n$'s for which $\mu^2(n)=0$. However, since Feng's claim is not substantiated, we must operate by supposing that $n$ is square-free. Otherwise, the computation of the convolution \eqref{d1convolution} becomes very difficult. Therefore, by keeping $n$ square-free, we get
\begin{align*}
\mathcal{M}(s,1) &= \sum_{k=0}^\infty \frac{1}{\log^k T} \sum_{n=1}^\infty \frac{1}{n^s} (-1)^k \mu(n) \sum_{p_1 \cdots p_k} \log p_1 \cdots \log p_k = \sum_{n=1}^\infty \frac{\mu(n)}{n^s} \sum_{k=0}^\infty (-1)^k \sum_{p_1 \cdots p_k | n} \frac{\log p_1 \cdots \log p_k}{\log^k T}.
\end{align*}
Here the $p_i$ denote distinct primes. This suggests a mollifier of the form
\begin{align} \label{fengsmollifierlogp}
\psi_{d=1}(s) = \sum_{n \le y_1} \frac{\mu(n)n^{\sigma_0-1/2}}{n^s} \sum_{k=2}^K \sum_{p_1 \cdots p_k | n} \frac{\log p_1 \cdots \log p_k}{\log^k y_1} P_{1,k} \bigg(\frac{\log(y_1/n)}{\log y_1}\bigg).
\end{align}
The conditions on $P_{1,k}$ are that $P_{1,k}(0)=0$ for all $k$. We note the following remarks.
\begin{enumerate}
\item Feng has set the convention of starting at $K=2$.
\item Here $K=2,3,\cdots$ is an integer of our choice coming from the truncation of the infinite sum over $k$. The higher $K$ is, the more precise the mollification. However, this is achieved at the cost of adding extra terms that require taxing computational resources.
\item The sign alternator $(-1)^k$ has been absorbed into the polyonimals $P_{1,k}$, i.e.
\[
(-1)^k P_{1,k}(x) = (-1)^k \sum_{i=0}^{\deg P} a_{1,k,i} x^i =  \sum_{i=0}^{\deg \tilde{P}} \tilde{a}_{1,k,i} x^i = \tilde{P}_{1,k}(x)
\]
where
\[
\begin{cases} 
\tilde{a}_{1,k,i} = (-1)^k a_{1,k,i}, \\
\deg P = \deg \tilde{P}.
\end{cases}
\]
\item Alternatively, we could have written
\[
\psi_{d=1}(s) = \sum_{n \le y_1} \frac{\mu^2(n)n^{\sigma_0-1/2}}{n^s} \sum_{k=2}^K \frac{1}{\log^k y_1} (\mu \star \Lambda^{\star k})(n) P_{1,k} \bigg(\frac{\log (y_1/n)}{\log y_1}\bigg),
\]
since $\mu^2(n)$ will discriminate square-free numbers.
\end{enumerate}

\subsection{The quadratic case $d=2$.} Before proceeding with the general case, it will be instructive to see how adding the second derivative increases substantially the complexity of the combinatorics associated to this problem. The degree $d$ is small enough that a trick that changes $\zeta''/\zeta$ into derivatives of $\zeta'/\zeta$ is sufficient to obtain a useful mollifier.
The effect of working with $d=2$ is that the expression $\mathcal{M}$ in \eqref{Mds} becomes
\begin{align*}
\mathcal{M}(s,2) &= \frac{1}{\zeta(s) + \frac{\zeta'(s)}{\log T} + \frac{\zeta''(s)}{\log T}} = \frac{1}{\zeta(s)}\bigg(1+\frac{1}{\log T}\frac{\zeta'}{\zeta}(s) + \frac{1}{\log^2 T}\frac{\zeta''}{\zeta}(s) \bigg)^{-1} \\
&= \frac{1}{\zeta(s)} \sum_{k=0}^\infty (-1)^k \bigg(\frac{1}{\log T} \frac{\zeta'}{\zeta}(s) + \frac{1}{\log^2 T} \frac{\zeta''}{\zeta}(s) \bigg)^k \\
&= \frac{1}{\zeta(s)} \sum_{k=0}^\infty (-1)^k \sum_{j=0}^k \binom{k}{j} \frac{1}{(\log T)^{k+j}} \bigg(\frac{\zeta''}{\zeta}(s)\bigg)^j \bigg(\frac{\zeta'}{\zeta}(s)\bigg)^{k-j}.
\end{align*}
Now we use $k=1$ in \eqref{arithmeticLambdaK} to replace $\zeta''/\zeta$ by an expression involving only $\zeta'/\zeta$ and hence
\begin{align*}
\mathcal{M}(s,2) &= \frac{1}{\zeta(s)} \sum_{k=0}^\infty (-1)^k \sum_{j=0}^k \binom{k}{j} \frac{1}{(\log T)^{k+j}} \bigg( \frac{d}{ds} \frac{\zeta'}{\zeta}(s) + \bigg(\frac{\zeta'}{\zeta}(s)\bigg)^2 \bigg)^j \bigg(\frac{\zeta'}{\zeta}(s)\bigg)^{k-j} \\
&= \sum_{k=0}^\infty  \sum_{j=0}^k \binom{k}{j} \frac{1}{(\log T)^{k+j}} \sum_{i=0}^j (-1)^{k+k-j+2i} \binom{j}{i} \sum_{n=1}^\infty \frac{(\mu \star \Lambda^{\star k-j+2i} \star \Lambda_{\mathcal{L},1}^{\star j-i})(n)}{n^s}.
\end{align*}
Let us then look at the Dirichlet convolution a little bit more closely. For $n$ square-free, in a general power setting, we have
\begin{align*}
(\mu \star \Lambda^{\star a} \star \Lambda_{\mathcal{L},1}^{\star b})(n) &= \sum_{d_0d_1 \cdots d_a d_{a+1} \cdots d_{a+b}=n} \mu(d_0) \Lambda(d_1) \cdots \Lambda(d_a) \Lambda_{\mathcal{L},1}(d_{a+1}) \cdots \Lambda_{\mathcal{L},1}(d_{a+b}) \\
&= \sum_{\operatorname{cyclic}} \mu(d_0) \Lambda(p_1) \cdots \Lambda(p_a) \Lambda_{\mathcal{L},1}(p_{a+1}) \cdots \Lambda_{\mathcal{L},1}(p_{a+b}) \\
&= \mu(p_{a+b+1} \cdots p_r) \sum_{p_1 \cdots p_{a+b}|n} \log(p_1) \cdots \log(p_a) \log^2(p_{a+1}) \cdots \log^2(p_{a+b}) \\
&= (-1)^{a+b}\mu(n) \sum_{p_1 \cdots p_{a+b}|n} \log(p_1) \cdots \log(p_a) \log^2(p_{a+1}) \cdots \log^2(p_{a+b}).
\end{align*}
Using $a=k-j+2i$ as well as $b=j-i$ and inserting this into the above expression for $\mathcal{M}(s,2)$ while keeping $n$ square-free yields
\begin{align*}
\mathcal{M}(s,2) &= \sum_{k=0}^\infty \sum_{j=0}^k (-1)^j \binom{k}{j} \frac{1}{(\log T)^{k+j}} \sum_{i=0}^j \binom{j}{i} \sum_{n=1}^\infty (-1)^{k+i} \frac{\mu(n)}{n^s} \\
& \quad \times \sum_{p_1 \cdots p_{k+i}|n} \log p_1 \cdots p_{k-j+2i} \log^2 p_{k-j+2i-1} \cdots \log^2 p_{k+i} \\
&= \sum_{n=1}^\infty \frac{\mu(n)}{n^s} \sum_{k=0}^\infty \sum_{j=0}^k \sum_{i=0}^j (-1)^{i+j+k} \binom{k}{j}\binom{j}{i} \\
& \quad \times \sum_{p_1 \cdots p_{k+i}|n} \frac{\log p_1 \cdots p_{k-j+2i} \log^2 p_{k-j+2i-1} \cdots \log^2 p_{k+i}}{(\log T)^{k+j}} .
\end{align*}
Hence, the mollifier should be of the form
\begin{align*}
\psi_{d=2}(s) &= \sum_{n \le y_2} \frac{\mu(n)n^{\sigma_0-1/2}}{n^s} \sum_{k=0}^K \sum_{j=0}^k \sum_{i=0}^j (-1)^{i+j+k} \binom{k}{j}\binom{j}{i} \\
&\quad \times \sum_{p_1 \cdots p_{k+i}|n} \frac{\log p_1 \cdots p_{k-j+2i} \log^2 p_{k-j+2i-1} \cdots \log^2 p_{k+i}}{(\log T)^{k+j}} P_{2,k,j,i}\bigg(\frac{\log (y_2/n)}{\log y_2}\bigg).
\end{align*}
Again $K=2,3,\cdots$ is a positive integer of our choice. We see that there are as many polynomials are there are primes (in this case $k+i$ polynomials). We could also have written
\[
\psi_{d=2}(s) = \sum_{n \le y_2} \frac{\mu^2(n)n^{\sigma_0-1/2}}{n^s} \sum_{k=1}^K \sum_{j=0}^k \sum_{i=0}^j (-1)^j \binom{k}{j}\binom{j}{i} \frac{(\mu \star \Lambda^{\star k-j+2i} \star \Lambda_{\mathcal{L},1}^{\star j-i})(n)}{(\log y_2)^{k+j}} P_{2,k+i} \bigg(\frac{\log (y_2/n)}{\log y_2} \bigg),
\]
since the term $\mu^2(n)$ will discriminate square-free numbers. 

\subsection{The general $d \ge 0$ case}
We now relax the condition on $n$ and forgo the computation of the Dirichlet convolution. The advantage of operating this way will become clearer in the proof of our main result, see $\mathsection$5.1 and $\mathsection$6. The general $d \ge 0$ mollifier we want to use is given by
\begin{align*}
\mathcal{M}(d,s) &= \frac{1}{\zeta(s) + \tfrac{\zeta'(s)}{\log T} + \tfrac{\zeta''(s)}{\log^2 T} + \cdots + \tfrac{\zeta^{(d)}(s)}{\log^d T}} \\
& = \frac{1}{\zeta(s)} \bigg(1 + \frac{\zeta'}{\zeta}(s) + \frac{1}{\log^2 T} \frac{\zeta''}{\zeta}(s) + \cdots + \frac{1}{\log^d T} \frac{\zeta^{(d)}}{\zeta}(s)\bigg)^{-1} \\
& = \frac{1}{\zeta(s)} \sum_{k=0}^\infty (-1)^k \bigg(\frac{\zeta'}{\zeta}(s) + \frac{1}{\log^2 T} \frac{\zeta''}{\zeta}(s) + \cdots + \frac{1}{\log^d T} \frac{\zeta^{(d)}}{\zeta}(s) \bigg)^k \\
& = \frac{1}{\zeta(s)} \sum_{k=0}^\infty (-1)^k \sum_{k_1 + k_2 + \cdots + k_d = k} \binom{k}{k_1, k_2, \cdots, k_d} \prod_{m=1}^d \bigg(\frac{1}{\log^m T} \frac{\zeta^{(m)}}{\zeta}(s)\bigg)^{k_m} \\
& = \sum_{k=0}^\infty (-1)^k \sum_{k_1 + k_2 + \cdots + k_d = k} \binom{k}{k_1, k_2, \cdots, k_d} \frac{1}{(\log T)^{\sum_{m=1}^d mk_m}} \frac{1}{\zeta(s)} \prod_{m=1}^d \bigg(\frac{\zeta^{(m)}}{\zeta}(s)\bigg)^{k_m}.
\end{align*}
We now use the convolution
\begin{align} \label{generalconvolution}
\frac{1}{\zeta(s)} \prod_{m=1}^d \bigg(\frac{\zeta^{(m)}}{\zeta}(s)\bigg)^{k_m} = (-1)^{1 \times k_1 + 2 \times k_2 + \cdots + d \times k_d}\sum_{n=1}^\infty \frac{(\mu \star \Lambda^{\star k_1} \star \Lambda_2^{\star k_2} \star \cdots \star \Lambda_d^{\star k_d}) (n)}{n^s}
\end{align}
so that we end up with
\begin{align*}
\mathcal{M}(d,s) &= \sum_{k=0}^\infty (-1)^k \sum_{k_1 + k_2 + \cdots + k_d = k} \binom{k}{k_1, k_2, \cdots, k_d} \frac{(-1)^{1 \times k_1 + 2 \times k_2 + \cdots + d \times k_d}}{(\log T)^{\sum_{m=1}^d mk_m}} \\
& \quad \times \sum_{n=1}^\infty \frac{(\mu \star \Lambda^{\star k_1} \star \Lambda_2^{\star k_2} \star \cdots \star \Lambda_d^{\star k_d}) (n)}{n^s}.
\end{align*}
This suggests a mollifier of the form
\begin{align} \label{generalmollifierconvolution}
\psi_d (s) &= \sum_{\ell=0}^K (-1)^\ell \sum_{\ell_1 + \ell_2 + \cdots + \ell_d = \ell} (-1)^{1 \times \ell_1 + 2 \times \ell_2 + \cdots + d \times \ell_d} \binom{\ell}{\ell_1, \ell_2, \cdots, \ell_d} \nonumber \\
& \quad \times \sum_{n \le y_d} \frac{n^{\sigma_0-1/2}}{n^s} \frac{(\mu \star \Lambda^{\star \ell_1} \star \Lambda_2^{\star \ell_2} \star \cdots \star \Lambda_d^{\star \ell_d}) (n)}{(\log y_d)^{\sum_{r=1}^d r \ell_r}} P_{d,\ell} \bigg( \frac{\log(y_d/n)}{\log y_d} \bigg).
\end{align}
Here the polynomials $P$ are such that if $d=0$, then $P_{0,\ell} \equiv P_0$ with $P_0(0)=0$ as well as $P_0(1)=1$; and if $d > 0$, then $P_{d,\ell}(0)=0$ for all $\ell \ge 0$. \\

As a check we observe that $\mu \star \Lambda^{\star 0} = \mu$ and also when $d=0$ the $K$-truncation disappears (i.e. $K=\ell=0$) so we are left with the mollifier
\begin{align}
\psi_0 (s) = \sum_{n \le y_0} \frac{\mu(n) n^{\sigma_0-1/2}}{n^s} P_{0} \bigg( \frac{\log(y_0/n)}{\log y_0} \bigg),
\end{align}
which is the Conrey-Levinson mollifier with $y_0 = T^{\theta_0}$ where $\theta_0 = \frac{4}{7}-\varepsilon$. We shall take $y_d = N = T^{\theta_d}$ with $\theta_d = \frac{4}{7}-\varepsilon$ for all $d \ge 0$. As we shall explain in $\mathsection 6$, there is no need to be concerned with $\mu^2(n)$.

\subsection{Combinatorial interpretation of the mollifier}
Before proceeding with the mean value integral it is worth pausing to see from a different angle the structure of these mollifiers. Another way to interpret the combinatorial meaning behind this mollification is to apply Fa\`{a} di Bruno's formula \cite{comtet} 
\[
\frac{d^n}{dx^n} f(g(x)) = \sum_{k=1}^n f^{(k)}(g(x)) B_{n,k}(g'(x), g''(x), \cdots, g^{(n-k+1)}(x)),
\]
to $\exp(\log \zeta(s))$ so that we can write the following representation
\begin{align}
  \frac{{{\zeta ^{(m)}}}}{\zeta }(s) &= \sum_{k = 1}^m  {B_{m,k}}\left( {\frac{{\zeta '}}{\zeta }(s),\frac{d}{{ds}}\frac{{\zeta '}}{\zeta }(s), \cdots ,\frac{{{d^{m - k}}}}{{d{s^{m - k}}}}\frac{{\zeta '}}{\zeta }(s)} \right) \nonumber \\
%   &= \sum_{k = 1}^m  {B_{m,k}}\left( {\frac{{\zeta '}}{\zeta }(s),\frac{d}{{ds}}\frac{{\zeta '}}{\zeta }(s), \cdots ,\frac{{{d^{m - 1}}}}{{d{s^{m - 1}}}}\frac{{\zeta '}}{\zeta }(s)} \right) \nonumber \\
%   &= \sum_{k = 1}^m  {B_{m,k}}\left( {\frac{{\zeta '}}{\zeta }(s),\frac{d}{{ds}}\frac{{\zeta '}}{\zeta }(s), \cdots ,\frac{{{d^m}}}{{d{s^m}}}\frac{{\zeta '}}{\zeta }(s)} \right) \nonumber \\
   &= {B_m}\left( {\frac{{\zeta '}}{\zeta }(s),\frac{d}{{ds}}\frac{{\zeta '}}{\zeta }(s), \cdots ,\frac{d^{m-1}}{ds^{m-1}}\frac{\zeta '}{\zeta }(s)} \right) \nonumber  
\end{align}
Therefore, picking up from $\mathcal{M}(d,s)$ we arrive at
\begin{align}
  \mathcal{M}(d,s) &= \frac{1}{{\zeta (s)}}\sum_{k = 0}^\infty   {( - 1)^k}\sum_{{k_1} + {k_2} +  \cdots  + {k_d} = k}  \binom{k}{k_1,k_2,\cdots,k_d}\prod_{m = 1}^d  \frac{1}{(\log T)^{m k_m}}\bigg( {\frac{{{\zeta ^{(m)}}}}{\zeta }(s)} \bigg)^{{k_m}} \nonumber \\
   &= \frac{1}{{\zeta (s)}}\sum_{k = 0}^\infty   {( - 1)^k}\sum_{{k_1} + {k_2} +  \cdots  + {k_d} = k}  \binom{k}{k_1,k_2,\cdots,k_d} \nonumber \\
	 & \quad \times \prod_{m = 1}^d  \frac{1}{(\log T)^{m k_m}}{\left( {{B_m}\left( {\frac{{\zeta '}}{\zeta }(s),\frac{d}{{ds}}\frac{{\zeta '}}{\zeta }(s), \cdots ,\frac{{{d^{m-1}}}}{{d{s^{m-1}}}}\frac{{\zeta '}}{\zeta }(s)} \right)} \right)^{{k_m}}}. \nonumber 
\end{align}
Let us set 
\[
x_i = \frac{d^{i}}{ds^{i}} \frac{\zeta'}{\zeta}(s) \quad \textnormal{for} \quad i=0,1,2,\cdots,m.
\] 
We place ourselves in a general setting of  two Bell polynomials and three powers
\begin{align} \label{bellp1}
\sum_{m_1+m_2=3} \bigg(\sum_{k_1=0}^3 B_{1,k_1}(x_1)\bigg)^{m_1} \bigg(\sum_{k_2=0}^3 B_{2,k_2}(x_1, x_2)\bigg)^{m_2} &= x_1^3 + x_1^4 + x_1^5 + x_1^6 \nonumber \\
& \quad + x_1^2x_2 + 2x_1^3x_2 + 3x_1^4x_2 + x_1x_2^2 + 3x_1^2x_2^2 +x_2^3.
\end{align}
This allows us to now plot the exact representations of the Bell diagrams. We first note that
\[
\sum_{m_1+m_2=3} \bigg(\sum_{k_1=0}^3 B_{1,k_1}(1)\bigg)^{m_1} \bigg(\sum_{k_2=0}^3 B_{2,k_2}(1, 1)\bigg)^{m_2} = 15,
\]
which means that we will have 15 diagrams.
\begin{figure}[H]
	\centering
	\includegraphics[scale=0.9]{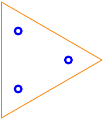}
	\includegraphics[scale=0.9]{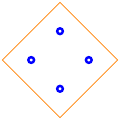}
	\includegraphics[scale=0.9]{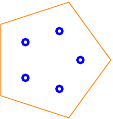}
	\includegraphics[scale=0.9]{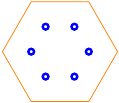}
	\includegraphics[scale=0.9]{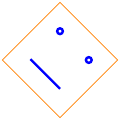}
	\includegraphics[scale=0.9]{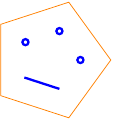}
	\includegraphics[scale=0.9]{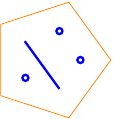}
	\includegraphics[scale=0.9]{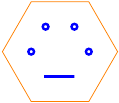}
	\includegraphics[scale=0.9]{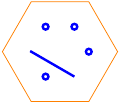}
	\includegraphics[scale=0.9]{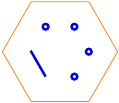}
	\includegraphics[scale=0.9]{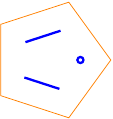}
	\includegraphics[scale=0.9]{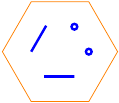}
	\includegraphics[scale=0.9]{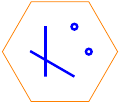}
	\includegraphics[scale=0.9]{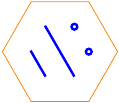}
	\includegraphics[scale=0.9]{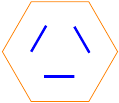}					
	\caption{Pictorial representation of \eqref{bellp1}.}
\end{figure}
As additional examples we will now increase the precision of the truncation by keeping two Bell polynomials but taking more powers. That means 
\begin{align} \label{bellp2}
\sum_{m_1+m_2=4} \bigg(\sum_{k_1=0}^3 B_{1,k_1}(x_1)\bigg)^{m_1} \bigg(\sum_{k_2=0}^3 B_{2,k_2}(x_1, x_2)\bigg)^{m_2} &= x_1^4 + x_1^5 + x_1^6 + x_1^7 + x_1^8 + x_1^3 x_2 + 
 2 x_1^4 x_2 + 3x_1^5 x_2   \nonumber \\
&\quad + 4 x_1^6 x_2 + x_1^2 x_2^2 + 
 3 x_1^3 x_2^2 + 6 x_1^4 x_2^2 + x_1 x_2^3  \nonumber \\
 &\quad + 4x_1x_2^3 + x_2^4.
\end{align}
This produces
\begin{align*}
\sum_{m_1+m_2=4} \bigg(\sum_{k_1=0}^3 B_{1,k_1}(1)\bigg)^{m_1} \bigg(\sum_{k_2=0}^3 B_{2,k_2}(1, 1)\bigg)^{m_2} = 31
\end{align*}
diagrams:
\begin{figure}[H]
	\centering
	\includegraphics[scale=0.9]{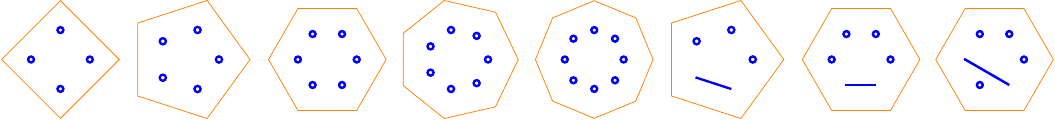}
	\includegraphics[scale=0.9]{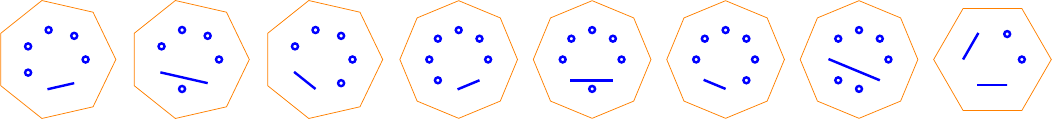}
	\includegraphics[scale=0.9]{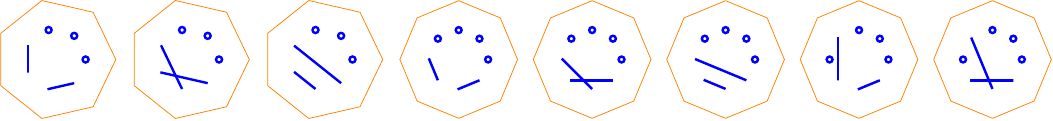}
	\includegraphics[scale=0.9]{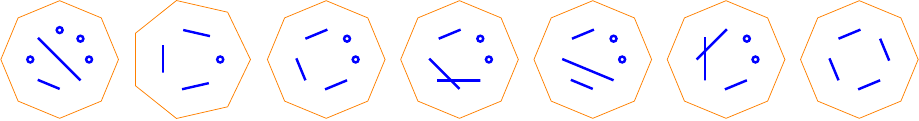}		
	\caption{Pictorial representation of \eqref{bellp2}.}
\end{figure}
Furthermore, if we instead increase the number of polynomials and keep three powers, then 
\begin{align}  \label{bellp3}
\sum_{m_1+m_2+m_3=3}&\bigg(\sum_{k_1=0}^3 B_{1,k_1}(x_1)\bigg)^{m_1} \bigg(\sum_{k_2=0}^3 B_{2,k_2}(x_1, x_2)\bigg)^{m_2}  \bigg(\sum_{k_3=0}^3 B_{3,k_3}(x_1, x_2, x_3)\bigg)^{m_3} \nonumber \\
&=
x_1^3 + x_1^4 + 2 x_1^5 + 2 x_1^6 + 2 x_1^7 + x_1^8 + x_1^9 + x_1^2 x_2 + 5 x_1^3 x_2 + 7 x_1^4 x_2 +  11 x_1^5 x_2 + 7 x_1^6 x_2 \nonumber \\
&\quad +  
 9 x_1^7 x_2 + x_1 x_2^2 + 6 x_1^2 x_2^2 + 16 x_1^3 x_2^2 + 15 x_1^4 x_2^2 +  
 27 x_1^5 x_2^2 + x_2^3 + 3 x_1 x_2^3 + 9 x_1^2 x_2^3 \nonumber \\
&\quad + 
 27 x_1^3 x_2^3 + x_1^2 x_3 +  x_1^3 x_3 + 3 x_1^4 x_3 + 2 x_1^5 x_3 + 3 x_1^6 x_3 + x_1 x_2 x_3 \nonumber \\
&\quad +  
 8 x_1^2 x_2 x_3 + 8 x_1^3 x_2 x_3 + 18 x_1^4 x_2 x_3 + x_2^2 x_3 + 6 x_1 x_2^2 x_3 + 27 x_1^2 x_2^2 x_3 \nonumber \\
&\quad +  
 x_1 x_3^2 + x_1^2 x_3^2 + 3 x_1^3 x_3^2 + x_2 x_3^2 +  
 9 x_1 x_2 x_3^2 + x_3^3.
\end{align}
Setting $x_1=x_2=x_3=1$ in \eqref{bellp3} yields 250 diagrams
%This set-up yields
%\[
%\sum_{m_1+m_2+m_3=3} \bigg(\sum_{k_1=0}^3 B_{1,k_1}(1)\bigg)^{m_1} \bigg(\sum_{k_2=0}^3 B_{2,k_2}(1, 1)\bigg)^{m_2}  \bigg(\sum_{k_3=0}^3 B_{3,k_3}(1, 1, 1)\bigg)^{m_3} = 250
%\]
%diagrams:
\begin{figure}[H] %0.28 max
	\centering
	\includegraphics[scale=0.28]{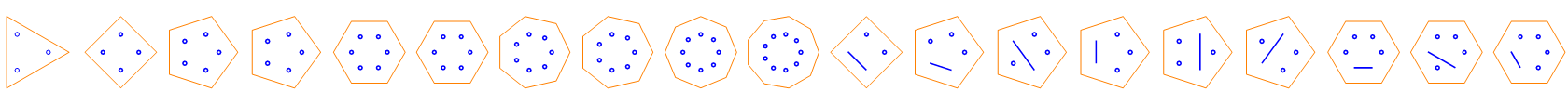}
	\includegraphics[scale=0.28]{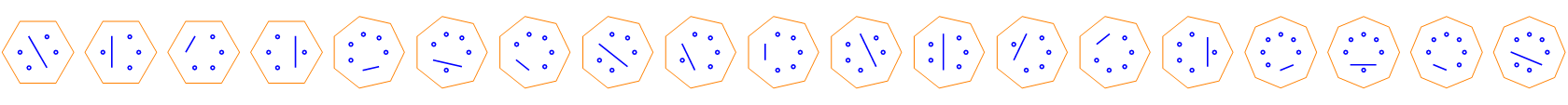}
	\includegraphics[scale=0.28]{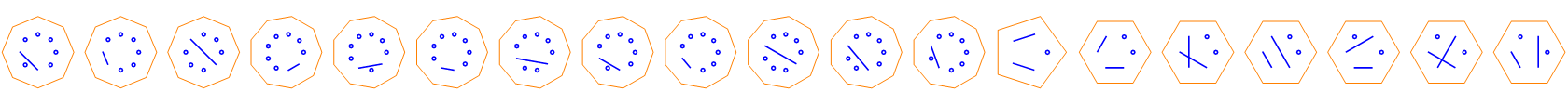}
	\includegraphics[scale=0.28]{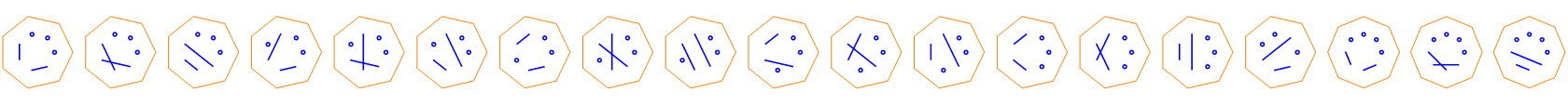}
	\includegraphics[scale=0.28]{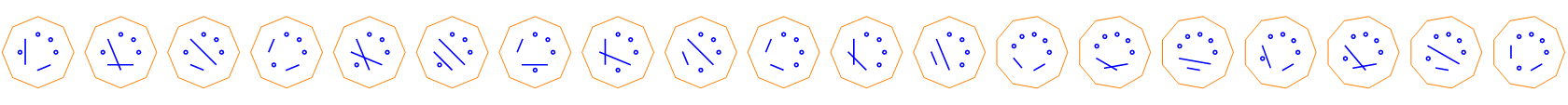}
	\includegraphics[scale=0.28]{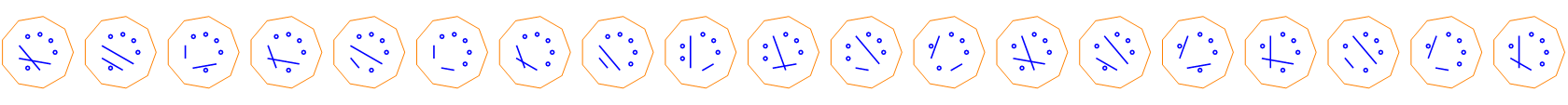}
	\includegraphics[scale=0.28]{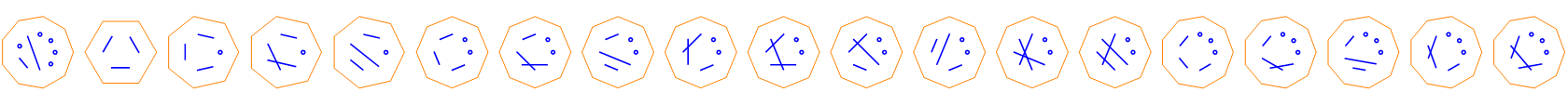}
	\includegraphics[scale=0.28]{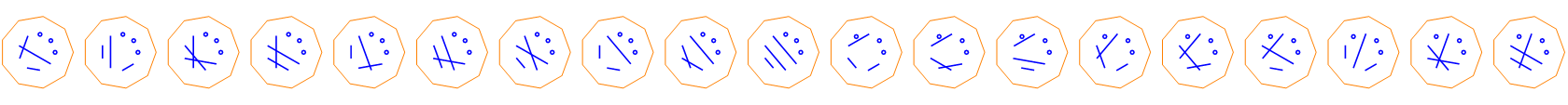}
	\includegraphics[scale=0.28]{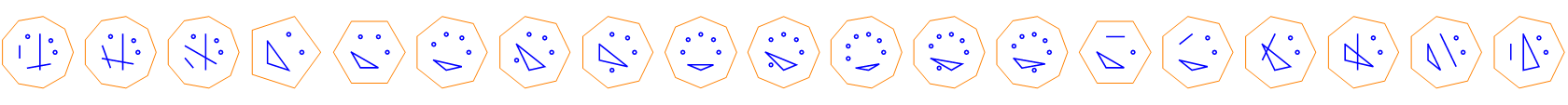}
	\includegraphics[scale=0.28]{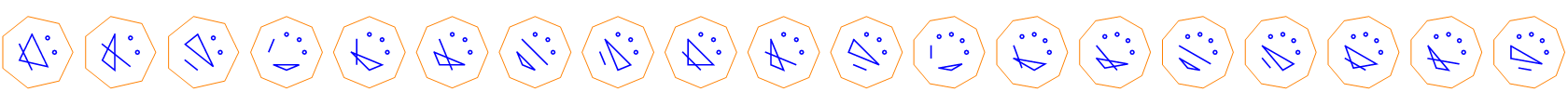}
	\includegraphics[scale=0.28]{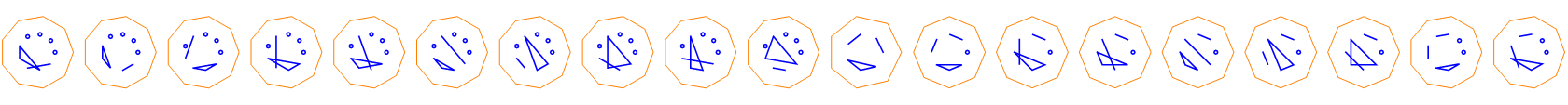}
	\includegraphics[scale=0.28]{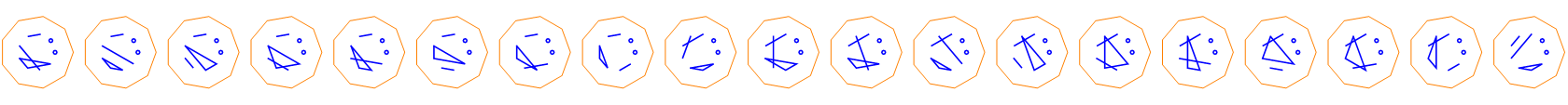}
	\includegraphics[scale=0.28]{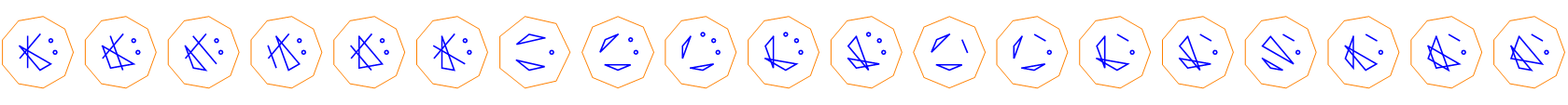}
	\includegraphics[scale=0.28]{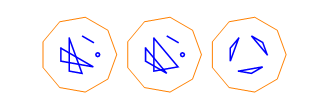}
	\caption{Pictorial representation of \eqref{bellp3}.}
\end{figure}
%\foreach \x in {1,2,3,4,5,6,7,8,9,10,11,12}
%{ %\lipsum[\x]
%    \includegraphics[scale=1]{linesof8_gr\x.eps}
%  %  \clearpage
%}
%
%We \textcolor{red}{need to code a small program that allows us to select (or returns as output) the diagrams such that
%\[
%\sumdotss_{\substack{0 \le k_1, \cdots ,k_{d-1},k_d \le K \\ k_1 + \cdots + k_{d-1} + k_d = K}} B_1(x_1)^{k_1} \cdots B_{k_{d-1}}(x_1,x_2,\cdots,x_{k_{d-1}})^{k_{d-1}}B_{k_d}(x_1,x_2,\cdots,x_{k_d})^{k_d},
%\]
%where $K$ and $d$ are parameters of our choice.}
%%%%%%%%%%%%%%%%%%%%%%%%%%%%%%%%%%%%%%%%%%%%%%%%%%%%%%%%%%%%%%%%%%%%%%%%%%%%%%%%%%%

\section{Main result for the moment integral}
Recall that $L := \log T$ and let
\[
\psi_1(s) := \sum_{n \le N} \frac{a_n}{n^s}, \quad \psi_2(s) := \sum_{n \le N} \frac{b_n}{n^s}, \quad \textnormal{with} \quad a_n, b_n \ll_\varepsilon n^\varepsilon, \quad N:= T^{\theta} \quad \textnormal{and} \quad \theta<1.
\]
Moreover, we shall denote the twisted second moment by
\begin{align*}
I(\alpha,\beta) := \int_{-\infty}^\infty \zeta(\tfrac{1}{2}+\alpha+it)\zeta(\tfrac{1}{2}+\beta-it) \psi_1 \overline{\psi_2} (\tfrac{1}{2}+it) \Phi\bigg(\frac{t}{T}\bigg)dt,
\end{align*}
where $\Phi$ is a smooth function supported on $[1,2]$ and satisfying $\Phi^{(j)}(x) \ll_j \log^j T$. The starting point is the following improvement of \cite[Theorem 1.2]{pr01}.
\begin{theorem} \label{meanvalueintegral}
Let $\alpha, \beta \ll L^{-1}$. Then one has
\begin{align*}
I(\alpha,\beta) = \sumtwo_{1 \le d,e \le N}\frac{a_d \overline{b_e}}{[d,e]} \frac{(d,e)^{\alpha+\beta}}{d^\alpha e^\beta} \int_{-\infty}^\infty \bigg(\zeta(1+\alpha+\beta)+\zeta(1-\alpha-\beta)\bigg(\frac{2\pi de}{t(d,e)^2}\bigg)^{\alpha+\beta} \bigg)\Phi\bigg(\frac{t}{T}\bigg)dt + O(\mathcal{E}),
\end{align*}
with $\mathcal{E}$ given by the following choices
\begin{equation}
\mathcal{E} = 
\begin{cases}
T^{\frac{3}{20}}N^{\frac{33}{20}}+N^{\frac{1}{2}}T^{\frac{1}{2}+\varepsilon}, & \mbox{if} \quad a_n \ll n^{\varepsilon}, \\
T^{\varepsilon}(N^{\frac{11}{6}}+N^{\frac{11}{12}}T^{\frac{1}{2}}), & \mbox{if} \quad a_n = \mu^2(n) (\mu \star \Lambda_1^{\star k_1}\star \Lambda_2^{\star k_2} \star \cdots \star \Lambda_D^{\star k_D})(n), \\
T^{\varepsilon}(N^{\frac{7}{4}}+N^{\frac{7}{8}}T^{\frac{1}{2}}), & \mbox{if} \quad a_n =  (\mu \star \Lambda_1^{\star k_1}\star \Lambda_2^{\star k_2} \star \cdots \star \Lambda_D^{\star k_D})(n). \nonumber
\end{cases}
\end{equation}
%with $f \in \mathcal{F}$.
\end{theorem}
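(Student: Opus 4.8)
\emph{Proof strategy.} The plan is to follow the route of \cite[Theorem 1.2]{pr01} --- open the mollifier into a double Dirichlet polynomial, insert an approximate functional equation for the product of the two zeta factors, read off the main term from the diagonal, and push the off-diagonal into the error term --- but to treat the off-diagonal more efficiently, separately in each of the three regimes for the coefficients $a_n$. First I would write
\[
I(\alpha,\beta)=\sumtwo_{1\le d,e\le N}\frac{a_d\overline{b_e}}{\sqrt{de}}\int_{-\infty}^{\infty}\zeta(\tfrac12+\alpha+it)\,\zeta(\tfrac12+\beta-it)\Big(\tfrac{e}{d}\Big)^{it}\Phi\Big(\tfrac{t}{T}\Big)\,dt ,
\]
and substitute a smoothed approximate functional equation
\begin{align*}
\zeta(\tfrac12+\alpha+it)\zeta(\tfrac12+\beta-it)&=\sum_{m,n}\frac{(n/m)^{it}}{m^{1/2+\alpha}n^{1/2+\beta}}\,W_{\alpha,\beta}\Big(\tfrac{2\pi mn}{t}\Big)\\
&\quad+\Big(\tfrac{t}{2\pi}\Big)^{-\alpha-\beta}\sum_{m,n}\frac{(n/m)^{it}}{m^{1/2-\beta}n^{1/2-\alpha}}\,W_{-\beta,-\alpha}\Big(\tfrac{2\pi mn}{t}\Big)
\end{align*}
(up to a negligible remainder), with $W$ a smooth weight of rapid decay. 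The inner integral then becomes a sum over $m,n$ of oscillatory integrals $\int(ne/md)^{it}W(\cdots)\Phi(t/T)\,dt$, which I would split according to whether $ne=md$ or $ne\neq md$.

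On the diagonal $ne=md$, setting $g=(d,e)$, $d=gd'$, $e=ge'$ with $(d',e')=1$ forces $n=d'\ell$ and $m=e'\ell$; the resulting $\ell$-sum $\sum_\ell\ell^{-1-\alpha-\beta}W_{\alpha,\beta}(2\pi d'e'\ell^2/t)$ completes, up to an admissible error from the tail of $W$, to $\zeta(1+\alpha+\beta)$, and reassembling the arithmetic factors produces the coefficient $a_d\overline{b_e}(d,e)^{\alpha+\beta}/([d,e]\,d^\alpha e^\beta)$. The dual term is handled identically, the prefactor $(t/2\pi)^{-\alpha-\beta}$ together with the diagonal bookkeeping yielding $\zeta(1-\alpha-\beta)(2\pi de/(t(d,e)^2))^{\alpha+\beta}$. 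Summing over $d,e$ and keeping the $t$-integral intact gives the stated main term; throughout one uses $\alpha,\beta\ll L^{-1}$ to keep all completed $\zeta$-values under control.

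The essential difficulty --- and the only part that distinguishes the three cases --- is the off-diagonal sum over $ne\neq md$. After truncating via the decay of $W$ (so $mn\ll T^{1+\varepsilon}$) and integrating by parts in $t$, only the range with $md$ or $ne$ of size at least $T^{1-\varepsilon}$ survives, and it must be treated arithmetically: for each shift $h=ne-md\neq0$ one detects the resulting linear congruence by Poisson summation (equivalently via the delta method), exposing Kloosterman-type sums twisted by $a_d$ and $\overline{b_e}$. In the generic case $a_n\ll n^\varepsilon$ these are trilinear sums in $d$, $e$ and the dual variable, bounded by the Bettin--Chandee estimate \cite{bc} (the refinement of Duke--Friedlander--Iwaniec \cite{dfi}), which recovers $\mathcal{E}=T^{3/20}N^{33/20}+N^{1/2}T^{1/2+\varepsilon}$ as in \cite{bcr}. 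When $a_n=\mu^2(n)(\mu\star\Lambda_1^{\star k_1}\star\cdots\star\Lambda_d^{\star k_d})(n)$ one instead runs a Type I/Type II (Vaughan, Heath--Brown) decomposition of $a_n$ and bounds the Type II pieces with the bilinear Kloosterman estimates of \cite{pr01}, giving $\mathcal{E}=T^\varepsilon(N^{11/6}+N^{11/12}T^{1/2})$; dropping the $\mu^2$ turns $a_n$ into a genuine Dirichlet convolution, so the Type I sums become divisor-type sums that the hyperbola method and completion handle more sharply, improving this to $\mathcal{E}=T^\varepsilon(N^{7/4}+N^{7/8}T^{1/2})$ and permitting $\theta=\tfrac47-\varepsilon$. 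I expect the bookkeeping in the Type I/Type II decomposition for the convolution coefficients --- in particular following the logarithmic weights $\log^{j}p$ carried by the $\Lambda_j$ through the Voronoi/Poisson steps --- to be the most delicate point.
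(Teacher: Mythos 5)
Your overall skeleton (open the mollifier, approximate functional equation, diagonal gives the stated main term, off-diagonal treated case by case) matches the architecture the theorem rests on, and indeed the paper inherits the main-term computation wholesale from \cite{pr01}: its proof is entirely about the error term. But for the new, decisive third case your proposal has two concrete gaps. First, you assert that dropping the $\mu^2$ only helps, because ``the Type I sums become divisor-type sums that the hyperbola method and completion handle more sharply.'' In fact removing $\mu^2$ creates the main technical obstacle: in the off-diagonal analysis one must evaluate the coefficient at $dn_1$ with $d$ and $n_1$ no longer coprime, and one has to separate these variables inside $(\mu \star \Lambda_1^{\star \ell_1}\star\cdots\star\Lambda_d^{\star \ell_d})(dn_1)$. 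The paper does this by factoring $n_1 \to hn_1$ with $h\mid d^{\infty}$, proving by induction that each $\Lambda_k$ is a finite linear combination of convolutions $(\log^{j_1}\Lambda)\star\cdots\star(\log^{j_R}\Lambda)$, and then using that each $\log^{j}\Lambda$ is supported on prime powers to split the convolution at $dh$ versus $n_1$. Your proposal never addresses this step, and without it the Type I/Type II decomposition of the coefficient cannot even be set up.

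Second, the mechanism you give for the improved exponent is not the one that works. The gain from $N^{11/6}$ (i.e.\ $\theta=6/11$) to $N^{7/4}$ (i.e.\ $\theta=4/7$) does not come from a sharper treatment of Type I sums: in the paper the Type I sums are still estimated with the Weil bound for incomplete Kloosterman sums \cite{weilbound}, exactly as before. The improvement comes from the Type II sums, where after Heath--Brown's identity \cite{heathbrown} and dyadic decomposition the bilinear sums of incomplete Kloosterman fractions are bounded by Deshouillers--Iwaniec's spectral estimate \cite[Lemma 1]{deshouillersIwaniec2}, and the Heath--Brown parameter is then balanced at $W=U^{1/4}$, yielding $\mathcal{A}^*_{M,N_1,N_2}\ll \tfrac{T^{\varepsilon}}{d}(T^{1/2}N^{7/8}+N^{7/4})$. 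A hyperbola-method treatment of Type I sums, or the Bettin--Chandee trilinear bound (which is only what is quoted, via \cite{bcr}, for the generic case $a_n\ll n^{\varepsilon}$), would not produce the third error term $T^{\varepsilon}(N^{7/4}+N^{7/8}T^{1/2})$; so as written your argument does not establish the case that actually permits $\theta=\tfrac{4}{7}-\varepsilon$.
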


We note that the second case was only proved for $D=1$ in \cite{pr01}, but the proof below shows that it can be adapted to $D \ge 0$. This second case will no longer be needed as we can `improve' it to the third case by relaxing the condition that discriminates square-free numbers\footnote{It is not exactly an improvement but rather a different problem altogether.}. However, we leave it in the theorem for chronological accuracy or in case it becomes useful in another moment integral problem of this type. The first case (when $\alpha=\beta=0$) is due to Bettin, Chandee and Radziwi\l{}\l{} \cite{bcr}. As mentioned earlier, the key to that result is the recent improvement of bilinear Kloosterman sums of Duke, Friedlander and Iwaniec \cite{dfi} due to Bettin and Chandee \cite{bc}.

Effectively, this means that if one considers a Dirichlet polynomial whose coefficients are given by the third case, then one can `push' the size of $\theta$ from $\frac{6}{11}$ to $\frac{4}{7}$. Also note that if $D=0$ in the third case, then one recovers $\mu \star \Lambda^{\star 0} = \mu$, that is the Conrey-Levinson mollifier. This means that the Feng mollifier \cite{feng, krz01, rrz01} and all its generalizations can be taken to have size $\theta_d = \frac{4}{7} - \varepsilon$ for $D \ge 0$ just as in Conrey's mollifier.

\begin{proof}[Proof of Theorem \textnormal{\ref{meanvalueintegral}}]
We need to adapt the proof appearing in \cite[$\mathsection$3.2.5]{pr01} since only the error terms are affected and the main terms remain exactly the same.

%Essentially, the relaxation on square-free numbers allows us to complete the incomplete Kloosterman sums appearing in \cite{pr01}. Once the term $\mu^2$ is removed, it was shown in \cite[$\mathsection$3.2.5]{pr01} that the size the error would be precisely the third case above. 

We assume familiarity with \cite{pr01} and its notation. Following through the proof of \cite[$\mathsection$3.2.5]{pr01}, we must bound the quantity
\begin{align*}
\sum_{0 < |a| < A} \nu_{x,y}(a) \mathop{\sum \sum}_{(n_1,n_2)=1} \frac{a_{dn_1}F_{N_1}(dn_1)r(n_2)}{n_1^{\alpha + w}}\e \left(-a \frac{\overline{n_1}}{n_2}\right),
\end{align*}
where $|r(n)| \ll n^\epsilon$ is some function. Recall that $n_i \asymp N_i/d$ and $n_1 \leq N$. The coefficients $a_m$ are a finite  linear combination of functions, so using linearity and the definition of the $a_m$ we see it suffices to bound
\begin{align*}
\sum_{0 < |a| < A} \nu_{x,y}(a) \mathop{\sum \sum}_{(n_1,n_2)=1} \frac{n_1^{\sigma_0 - \frac{1}{2}} (\mu \star \Lambda^{\star\ell_1} \star \cdots \star \Lambda_d^{\star\ell_d})(dn_1) F_{N_1}(dn_1)P \left(\frac{\log(N/dn_1)}{\log N} \right) r(n_2)}{n_1^{\alpha + w}}\e \left(-a \frac{\overline{n_1}}{n_2}\right).
\end{align*}
Using the binomial theorem and the additivity of the logarithm we may separate $d$ and $n_1$ from one another in the polynomial $P$. We see it suffices to bound
\begin{align*}
\sum_{0 < |a| < A} \nu_{x,y}(a) \mathop{\sum \sum}_{(n_1,n_2)=1} \frac{n_1^{\sigma_0 - \frac{1}{2}} (\log n_1)^j (\mu \star \Lambda^{\star\ell_1} \star \cdots \star \Lambda_d^{\star\ell_d})(dn_1) F_{N_1}(dn_1) r(n_2)}{n_1^{\alpha + w}}\e \left(-a \frac{\overline{n_1}}{n_2}\right),
\end{align*}
where $j$ is some fixed, nonnegative integer.

We are now faced with the task of separating $d$ and $n_1$ in the arithmetic factor 
\[
(\mu \star \Lambda^{\star\ell_1} \star \cdots \star \Lambda_d^{\star\ell_d})(dn_1) F_{N_1}(dn_1).
\] 
In the second case of the theorem we had a $\mu^2$ factor in our coefficients which meant we could automatically take $d$ and $n_1$ to be coprime to one another, and this simplified things somewhat.

We factor $n_1 \rightarrow h n_1$, where $h \mid d^\infty$ and $n_1$ is coprime to $d$. The quantity to bound therefore becomes
\begin{align*}
&\sum_{\substack{h \mid d^\infty \\ h \ll N_1/d}} \frac{h^{\sigma_0 - \frac{1}{2}}}{h^{\alpha + w}} \sum_{0 < |a| < A} \nu_{x,y}(a) \\ 
& \quad \times \mathop{\sum \sum}_{\substack{n_1 \leq N \\ n_1 \asymp N_1/dh \\ N_2 \asymp N_2/d \\ (n_1,d n_2)=1 \\ (n_2,h)=1}} \frac{n_1^{\sigma_0 - \frac{1}{2}} (\log hn_1)^j (\mu \star \Lambda^{\star\ell_1} \star \cdots \star \Lambda_d^{\star\ell_d})(dhn_1) F_{N_1}(dhn_1) r(n_2)}{n_1^{\alpha + w}}\e \left(-a \frac{\overline{hn_1}}{n_2}\right).
\end{align*}
By the additivity of the logarithm and the binomial theorem we may separate $h$ and $n_1$ in $(\log hn_1)^j$, so that we must bound
\begin{align*}
&\sum_{\substack{h \mid d^\infty \\ h \ll N_1/d}} \frac{h^{\sigma_0 - \frac{1}{2}}(\log h)^k}{h^{\alpha + w}} \sum_{0 < |a| < A} \nu_{x,y}(a) \\ 
& \quad \times \mathop{\sum \sum}_{\substack{n_1 \leq N \\ n_1 \asymp N_1/dh \\ N_2 \asymp N_2/d \\ (n_1,d n_2)=1 \\ (n_2,h)=1}} \frac{n_1^{\sigma_0 - \frac{1}{2}} (\log n_1)^j (\mu \star \Lambda^{\star\ell_1} \star \cdots \star \Lambda_d^{\star\ell_d})(dhn_1) F_{N_1}(dhn_1) r(n_2)}{n_1^{\alpha + w}}\e \left(-a \frac{\overline{hn_1}}{n_2}\right)
\end{align*}
for some nonnegative integers $j$ and $k$ ($j$ is not necessarily the same as before). 

We claim that for all $k \geq 1$ we may write $\Lambda_k(n)$ as a finite linear combination of functions of the form
\begin{align*}
(\log^{j_1}\Lambda \star\log^{j_2}\Lambda \star \cdots \star \log^{j_R}\Lambda)(n),
\end{align*}
where the $j_i$ are nonnegative integers. We proceed by induction. The base case $k = 1$ is trivial. Now assume it is true for $k$. The recurrence formula gives
\begin{align*}
\Lambda_{k+1}(n) = \log(n) \Lambda_k(n) + (\Lambda \star\Lambda_k)(n).
\end{align*}
By the induction hypothesis $\Lambda_k$ is a linear combination of functions of the desired form, and therefore so is $\Lambda \star\Lambda_k$. To see that $\log(n) \Lambda_k(n)$ is also of the desired form, it suffices to apply the induction hypothesis and note that for any arithmetic functions $f_1, \cdots, f_J$ we have
\begin{align*}
\log(n) (f_1 \star \cdots \star f_J)(n) = \sum_{i=1}^J (g_{1,i} \star \cdots \star g_{J,i})(n),
\end{align*}
where $g_{j,i} = f_j$ if $i \neq j$, and $g_{i,i} = f_i \log$.

The quantity to bound is therefore a finite linear combination of quantities of the form
\begin{align*}
&\sum_{\substack{h \mid d^\infty \\ h \ll N_1/d}} \frac{h^{\sigma_0 - \frac{1}{2}} (\log h)^k}{h^{\alpha + w}} \sum_{0 < |a| < A} \nu_{x,y}(a) \\ 
& \quad \times \mathop{\sum \sum}_{\substack{n_1 \leq N \\ n_1 \asymp N_1/dh \\ N_2 \asymp N_2/d \\ (n_1,d n_2)=1 \\ (n_2,h)=1}} \frac{n_1^{\sigma_0 - \frac{1}{2}} (\log n_1)^j (\mu \star \log^{j_1}\Lambda \star \cdots \star \log^{j_R}\Lambda)(dhn_1) F_{N_1}(dhn_1) r(n_2)}{n_1^{\alpha + w}}\e \left(-a \frac{\overline{hn_1}}{n_2}\right).
\end{align*}
We now give the argument that shows how to separate $n_1$ from $dh$ in expressions of this form. We have
\begin{align*}
(\mu \star \log^{j_1}\Lambda \star \cdots \star \log^{j_R}\Lambda)(dhn_1) &= \mathop{\sum \cdots \sum}_{m\ell_1 \cdots \ell_R = dhn_1} \mu(m) \log^{j_1}(\ell_1)\Lambda(\ell_1) \cdots \log^{j_R}(\ell_R) \Lambda(\ell_R).
\end{align*}
Since $\log(\ell_i)^{j_i}\Lambda(\ell_i)$ is supported on prime powers and $n_1$ is coprime to $dh$, we see that $(\mu \star\log^{j_1}\Lambda \star \cdots \star \log^{j_R}\Lambda)(dhn_1)$ is the sum of a bounded number of functions of the form
\begin{align*}
\mathop{\sum \cdots \sum}_{\substack{m\ell_1 \cdots \ell_R = dhn_1 \\ \ell_{i_1}, \cdots, \ell_{i_s} \mid dh \\ \ell_{i_{s+1}},\cdots,\ell_{i_R} \mid n_1}} \mu(m) \log^{j_1}(\ell_1)\Lambda(\ell_1) \cdots \log^{j_R}(\ell_R) \Lambda(\ell_R).
\end{align*}
We write $m = m'm''$, where $m' \mid dh$ and $m'' \mid n_1$ to see that this last quantity is
\begin{align*}
(\mu \star \log^{j_{i_1}}\Lambda \star \cdots \star \log^{j_{i_s}}\Lambda)(dh)(\mu \star \log^{j_{i_{s+1}}}\Lambda \star \cdots \star \log^{j_{i_R}}\Lambda)(n_1),
\end{align*}
which gives the desired separation. It follows that the quantity in question is a finite linear combination of sums of the form
\begin{align*}
&\sum_{\substack{h \mid d^\infty \\ h \ll N_1/d}} \frac{G(h)}{h^{\alpha + w}} \sum_{0 < |a| < A} \nu(a) \\ 
& \quad \times \mathop{\sum \sum}_{\substack{n_1 \leq N \\ n_1 \asymp N_1/dh \\ n_2 \asymp N_2/d \\ (n_1,d n_2)=1 \\ (n_2,h)=1}} \frac{n_1^{\sigma_0 - \frac{1}{2}} (\log n_1)^j (\mu \star\log^{j_1}\Lambda \star \cdots \star \log^{j_R}\Lambda)(n_1) F_{N_1}(dhn_1) r(n_2)}{n_1^{\alpha + w}}\e \left(-a \frac{\overline{hn_1}}{n_2}\right),
\end{align*}
where $G$ is some function satisfying $|G(h)| \ll \tau(dh)^{O(1)} (\log N)^{O(1)}$, and the $j_i$ are nonnegative integers that are not necessarily the same as before.

Now that we have separated the variables, we perform a combinatorial decomposition on the function $(\mu \star\log^{j_1}\Lambda \star \cdots \star \log^{j_R}\Lambda)(n_1)$ to reduce to Type I and Type II exponential sums. For $n \leq x$ and $K \geq 1$ a fixed integer, the multinomial theorem and Heath-Brown's identity \cite{heathbrown} imply
\begin{align*}
(\log n)^J \Lambda(n) &= \sum_{j=1}^K (-1)^{j-1}{K \choose j} \mathop{\sum \cdots \sum}_{k_1 + \cdots + k_{2j} = J} {J \choose {k_1,\ldots,k_{2j}}} \\
& \quad \times \mathop{\sum \cdots \sum}_{\substack{m_1 \cdots m_{2j} = n \\ m_i \leq x^{1/K}, \ i \leq j}} \mu(m_1) \cdots \mu(m_j) \log(m_{2j})\prod_{t=1}^{2j} \log^{k_t}(m_t).
\end{align*}
Performing the usual dyadic decompositions and combinatorial maneuvers (see \cite{pr01} for more details), we see that the sum in question is a linear combination of $O((N_1/dh)^\epsilon)$ sums of the form
\begin{align*}
\sum_{0 < |a| < A} \nu(a) \sum_{\substack{v \asymp V \\ (v,h)=1}}r(v)\sum_{\substack{u \leq N \\ u \asymp U \\ (u,dv)=1}} \frac{u^{\sigma_0 - \frac{1}{2}}(\log u)^k}{u^{\alpha + w}} (\alpha \star \beta)(u) F_{N_1}(dhu) \e \left(-a \frac{\overline{hu}}{v} \right),
\end{align*}
where $\alpha$ and $\beta$ are arithmetic functions supported on dyadic intervals, $U \asymp N_1/dh, V \asymp N_2/d$. We have dropped the $h$ summation, temporarily, but we shall return to it later. In the Type I case we have that $\alpha$ is supported on integers $n \leq W$ and $\beta = \log^S$ for some nonnegative integer $S$. Here $W \ll (N_1/dh)^{1/3}$ is a parameter at our disposal. In the Type II case we have that $\alpha$ and $\beta$ are supported on integers in $[W,(N_1/dh)W^{-1}]$.

Let us consider first the case of a Type I sum. Using the binomial theorem to separate variables, we have
\begin{align*}
\sum_{0 < |a| < A} \nu(a) \sum_{\substack{v \asymp V \\ (v,h)=1}}r(v) \sum_{\substack{e \asymp E \\ (e,dv)=1}} \alpha(e) \sum_{\substack{f \leq N/e \\ f \asymp U/E \\ (f,dv)=1}} \frac{f^{\sigma_0 - \frac{1}{2}}(\log f)^j}{f^{\alpha + w}}F_{N_1}(dhef) \e \left(-a \frac{\overline{hef}}{v} \right),
\end{align*}
where $\alpha$ is not necessarily the same as before, and $E \ll W$. We use summation by parts and the usual Weil bound for incomplete Kloosterman sums \cite{weilbound} to obtain that the sum on $f$ is
\begin{align*}
\ll (1+|w|)T^\epsilon V^{1/2} (a,v) \left(1 + \frac{U}{E V}\right).
\end{align*}
Summing over all our variables and recalling $E \ll W$, we obtain a Type I bound of
\begin{align*}
(1+|w|)T^\epsilon (AV^{3/2}W + AUV^{1/2}).
\end{align*}

We now turn to a Type II sum. We use the binomial theorem to separate variables in $\log^j$ and the Mellin transform to separate variables in $F_{N_1}$. We obtain
\begin{align*}
\sum_{0 < |a| < A} \nu(a) \sum_{\substack{v \asymp V \\ (v,h)=1}}r(v) \mathop{\sum \sum}_{\substack{e \asymp E \\ f \asymp F \\ (ef,v)=1}} \alpha(e) \beta(f)\e \left(-a \frac{\overline{hef}}{v} \right),
\end{align*}
where $EF \asymp U$ and $W \ll E\ll F \ll UW^{-1}$. By a result of Deshouillers and Iwaniec (\cite[Lemma 1]{deshouillersIwaniec2}, the variable $h$ corresponds to $\rho$ and bounds on $h$ arise from that lemma) we obtain
\begin{align*}
\ll \ &E^{-1/2}A^{1/2}UV + h^{1/4} A UV^{1/2} + h^{1/4}E^{1/4} A^{3/4}UV^{1/2} + h^{1/4} E^{-1/4}A U^{3/4} V \\ 
&+ h^{1/2}E^{1/4}A U^{3/4}V^{3/4} + h^{1/4} E^{1/4}A^{1/2}U^{3/4}V + h^{1/2}E^{3/4}A^{1/2}U^{3/4}V^{3/4}.
\end{align*}
We now set $W = U^{1/4}$ to balance the Type I and Type II bounds. We use the bound $E^{-a} \ll W^{-a} = U^{-a/4}$ for $a < 0$, and $E^b \ll U^{b/2}$ for $b > 0$, and note that in each term the power of $h$ is smaller than the power of $U$. The variable $U$ is of size $N_1/dh$, thus when $U$ appears there is a hidden $h$. The significance of the power of $h$ being less than the power of $U$ is that we need some positive power of $h$ in the denominator at the end of the day so we can sum over $h$ and not have it blow up. Since
\begin{align*}
\sum_{h \mid d^\infty} \frac{G(h)}{h^\epsilon} \ll (dT)^\epsilon,
\end{align*}
we find that, in the notation of \cite{pr01}, we have
\begin{align*}
\mathcal{A}_{M,N_1,N_2}^* \ll \frac{T^\epsilon}{d} (T^{1/2}N^{7/8} + N^{7/4} ).
\end{align*}
Thus, taking $N = T^{\frac{4}{7} - \varepsilon}$ is permissible.
\end{proof}

\section{The square-free terms and Feng's conjecture}

We pause at this point to explain what happened to Feng's conjecture (unproved claim, rather) that only square-free terms contribute to the integral appearing in Theorem \ref{meanvalueintegral}. It is not entirely clear how Feng guessed that numbers containing a square would `contribute a lower order term for the mean value integral', \cite[p. 516]{feng}.\\

It is plausible that the approach Feng followed, which is a mixture of elementary inductions juxtaposed with Mertens's formulas $\sum_{p \le y} \frac{\log p}{p} = \log y + O(1)$ and $\sum_{p|n} \frac{\log p}{p} \ll \log \log n$, only works reasonably well if $n$ is square-free. Feng re-writes the mollifier by removing the Dirichlet convolutions and explicitly writing their result. He uses
\begin{align} \label{convolution11}
(\mu \star \Lambda^{\star k})(n) = (-1)^k \mu(n) \sum_{p_1 p_2 \cdots p_k | n} \log p_1 \log p_2 \cdots \log p_k,
\end{align}
which is only valid when $n$ is square-free. Attempting to obtain a general formula for $(\mu \star \Lambda^{\star k})(n)$ when $n$ contains a square runs into combinatorial difficulties (see also \cite[p. 309]{levinsoncollected} for a similar commentary on mollifying $\frac{1}{\zeta'}$ instead of $\frac{\zeta'}{\zeta}$). In fact, a key result of this is Lemma 9. This lemma explicitly demands that we deal only with square-free numbers when multiplying mollifiers. Again, adapting such a result for arbitrary $n$ would not be easy.\\

For $\ell = 1$, $(\mu \, \star \, \Lambda^{\star \ell}) = \mu^2(n)(\mu\, \star \,\Lambda^{\star \ell})$,the plots below illustrate the differences between the unrestricted convolution $(\mu \star \Lambda^{\star \ell})$ and the `square-free convolution' $\mu^2(n)(\mu \star \Lambda^{\star \ell})$ for $\ell=2$ and $\ell=3$, respectively.   

\begin{figure}[H]
\centering
\includegraphics[scale=0.89]{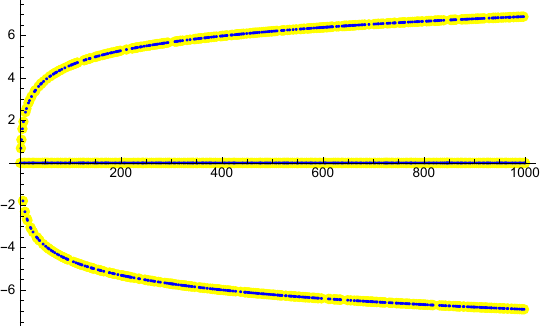}
\includegraphics[scale=0.89]{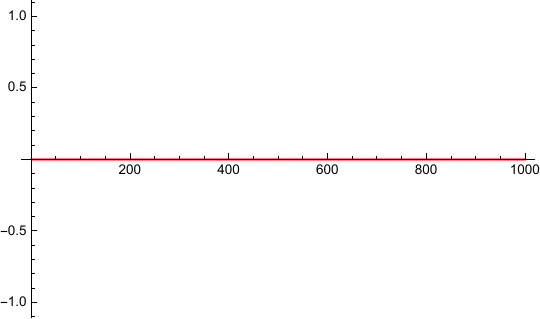}
\caption{\underline{Left}: Plots of $\mu^2(n)(\mu^{\star 2} \star \log n)(n)$ in yellow, $(\mu^{\star 2} \star \log n)(n)$ in blue. \underline{Right}: Plot of $(\mu^{\star 2} \star \log n)(n)-\mu^2(n)(\mu^{\star 2} \star \log n)(n)$, for $1 \le n \le 1000$.}
\end{figure}
Next
\begin{figure}[H]
\centering
\includegraphics[scale=0.89]{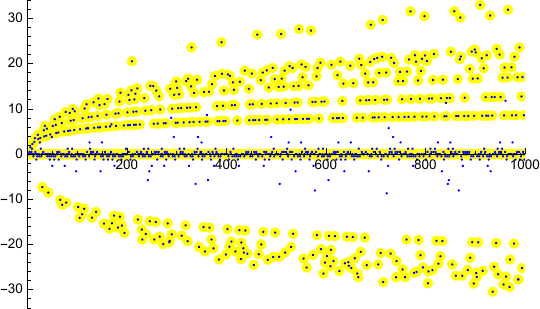}
\includegraphics[scale=0.89]{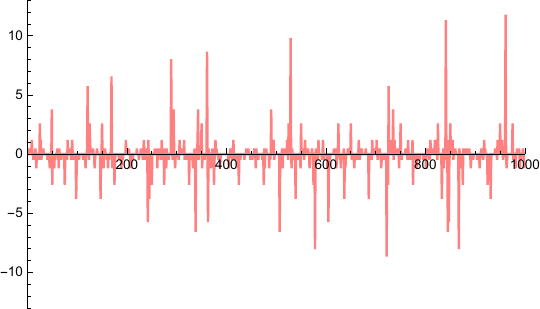}
\caption{\underline{Left}: Plots of $\mu^2(n)(\mu^{\star 3} \star \log^{\star 2} n)(n)$ in yellow, $(\mu^{\star 3} \star \log^{\star 2} n)(n)$ in blue. \underline{Right}: Plot of $(\mu^{\star 3} \star \log^{\star 2} n)(n)-\mu^2(n)(\mu^{\star 3} \star \log^{\star 2} n)(n)$, for $1 \le n \le 1000$.}
\end{figure}
Moreover
\begin{figure}[H]
\centering
\includegraphics[scale=0.89]{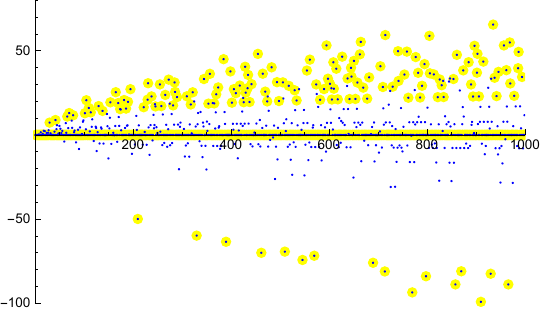}
\includegraphics[scale=0.89]{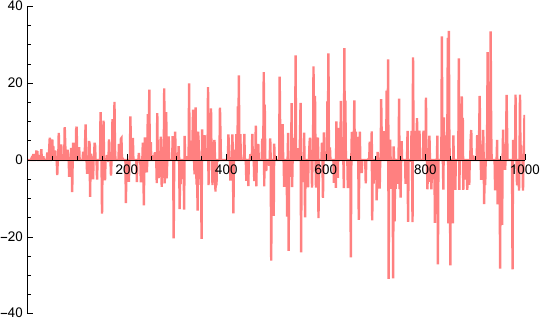}
\caption{\underline{Left}: Plots of $\mu^2(n)(\mu^{\star 4} \star \log^{\star 3} n)(n)$ in yellow, $(\mu^{\star 4} \star \log^{\star 3} n)(n)$ in blue. \underline{Right}: Plot of $(\mu^{\star 4} \star \log^{\star 3} n)(n)-\mu^2(n)(\mu^{\star 4} \star \log^{\star 3} n)(n)$, for $1 \le n \le 1000$.}
\end{figure}
The plots below illustrate the difference between the partial sums of the restricted and unrestricted convolutions.
\begin{figure}[H]
\centering
\includegraphics[scale=0.89]{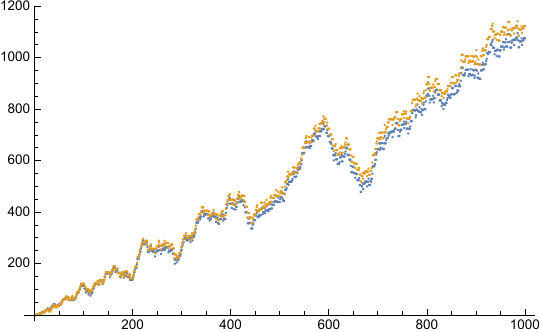}
\includegraphics[scale=0.89]{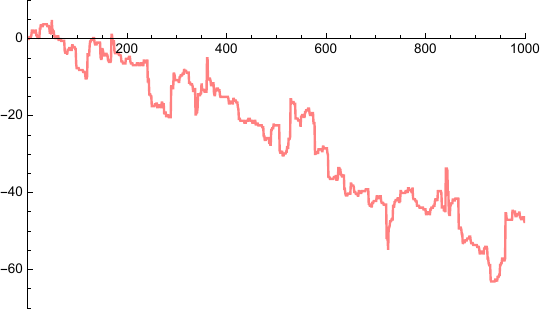}
\caption{\underline{Left}: Plots of $\sum_{n \le x} (\mu \star \Lambda^{\star 2})(n)$ in blue and $\sum_{n \le x} \mu^2(n)(\mu \star \Lambda^{\star 2})(n)$ in orange for $1 \le x \le 1000$. \underline{Right}: Plots of $\sum_{n \le x} (\mu \star \Lambda^{\star 2})(n)-\sum_{n \le x} \mu^2(n)(\mu \star \Lambda^{\star 2})(n)$ for $1 \le x \le 1000$}
\end{figure}
\begin{figure}[H]
\centering
\includegraphics[scale=0.89]{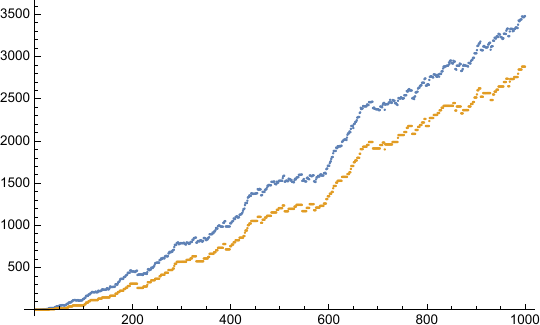}
\includegraphics[scale=0.89]{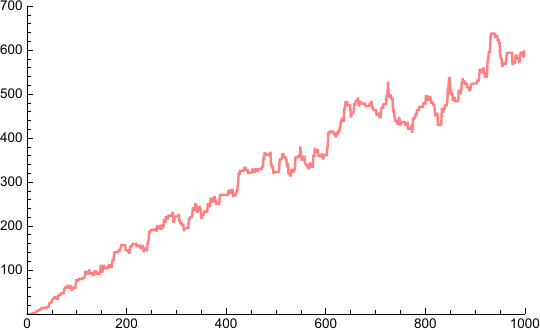}
\caption{\underline{Left}: Plots of $\sum_{n \le x} (\mu \star \Lambda^{\star 3})(n)$ in blue and $\sum_{n \le x} \mu^2(n)(\mu \star \Lambda^{\star 3})(n)$ in orange for $1 \le x \le 1000$. \underline{Right}: Plots of $\sum_{n \le x} (\mu \star \Lambda^{\star 3})(n)-\sum_{n \le x} \mu^2(n)(\mu \star \Lambda^{\star 3})(n)$ for $1 \le x \le 1000$}
\end{figure}
%In the $\mathsection$5.1, we will prove Feng's claim.

\subsection{Proof of Feng's conjecture}

We examine here only the case $K = 2$; the case of higher values of $K$ works out similarly. We examine the main term
\begin{align}\label{eq: main term expression}
T \widehat{\Phi}(0) \zeta(1+\alpha + \beta) \mathop{\sum \sum}_{d,e \leq N} \frac{a_d a_e}{[d,e]} \frac{(d,e)^{\alpha + \beta}}{d^\alpha e^\beta},
\end{align}
where $a_d$ is our mollifier coefficient for $K = 2$, i.e.
\begin{align*}
a_n = (\mu \star \Lambda^{\star 2})(n)P\left( \frac{\log N/n}{\log N}\right),
\end{align*}
where $P$ is a polynomial satisfying $P(0) = 0$. We write $a_d = b_d + c_d$, where $b_d$ is the Feng coefficient
\begin{align*}
b_d = \mu(d)P\left( \frac{\log N/d}{\log N}\right)\mathop{\sum \sum}_{q_1 \neq q_2 \mid d} \frac{(\log q_1)(\log q_2)}{(\log N)^2} 
\end{align*}
and $c_d$ is the difference of $a_d$ and $b_d$. The reader may like to prove, as an exercise, that $c_d$ vanishes unless $d$ is of the form $d = p^k m$, where $p$ is a prime, $k \geq 2$, and $m$ is a squarefree integer coprime to $p$, in which case one has
\begin{align*}
c_{d} = \frac{1}{(\log N)^2} (\log p)^2 \mu(m).
\end{align*}
Putting $a_d = b_d + c_d$ into \eqref{eq: main term expression} we get the ``Feng'' part, which has $b_d$ and $b_e$, and then three other terms all of which involve at least one of $c_d$ or $c_e$. We expect that these latter three terms are negligible.

We investigate the crossed case with $b_d$ and $c_e$. Write $h(d)$ for the sum over $q_1$ and $q_2$. Using what we know about $c_e$, and ignoring the factor of $T \widehat{\Phi}(0)$, we see we may write this expression as
\begin{align*}
\frac{\zeta(1+\alpha + \beta)}{ (\log N)^{2}}\sum_{d \leq N} \frac{\mu(d)h(d)P\left( \frac{\log N/d}{\log N}\right)}{d^\alpha} \mathop{\sum \sum}_{\substack{p^k m \leq N \\ k \geq 2 \\ (m,p)=1}} \frac{(\log p)^2 \mu(m) (d,p^k m)^{\alpha + \beta}}{[d,p^k m]p^{k \beta}m^\beta}P\left( \frac{\log N/p^k m}{\log N}\right).
\end{align*}
Now we put the sum on $p^k$ as the outermost sum. There are two cases to consider: $p \mid d$ and $p \nmid d$. We consider here only the ``generic'' case $p \nmid d$. We therefore have
\begin{align*}
\frac{\zeta(1+\alpha + \beta)}{(\log N)^2}\sum_{\substack{p^k \leq N \\ k \geq 2}} \frac{(\log p)^2}{p^{(1+\beta)k}}\sum_{\substack{d \leq N \\ (d,p)=1}} \frac{\mu(d)h(d)P\left( \frac{\log N/d}{\log N}\right)}{d^\alpha} \sum_{\substack{m \leq N/p^k \\ (m,p)=1}} \frac{ \mu(m) (d, m)^{\alpha + \beta}}{[d,m]m^\beta}P\left( \frac{\log N/p^k m}{\log N}\right).
\end{align*}

We now change variables $m \rightarrow em$, where $e \mid d$ and $(m,d)=1$, and arrive at
\begin{align*}
\frac{\zeta(1+\alpha + \beta)}{(\log N)^2}&\sum_{\substack{p^k \leq N \\ k \geq 2}} \frac{(\log p)^2}{p^{(1+\beta)k}}\sum_{\substack{d \leq N \\ (d,p)=1}} \frac{\mu(d)h(d)P\left( \frac{\log N/d}{\log N}\right)}{d^{1+\alpha}} \\
&\times\sum_{\substack{e \mid d \\ e \leq N/p^k}} \mu(e) e^\alpha \sum_{\substack{m \leq N/ep^k \\ (m,dp)=1}} \frac{ \mu(m)}{m^{1+\beta}}P\left( \frac{\log N/ep^k m}{\log N}\right).
\end{align*}
Next, we interchange the order of summation so the $e$-sum is the next sum after the $p^k$-sum. We change variables $d \rightarrow ed$ to get
\begin{align*}
\frac{\zeta(1+\alpha + \beta)}{(\log N)^2}&\sum_{\substack{p^k \leq N \\ k \geq 2}} \frac{(\log p)^2}{p^{(1+\beta)k}}\sum_{\substack{e \leq N/p^k \\ (e,p)=1}} \frac{\mu^2(e)}{e}\sum_{\substack{d \leq N/e \\ (d,ep)=1}} \frac{\mu(d) h(de)P\left( \frac{\log N/de}{\log N}\right)}{d^{1+\alpha}} \\
&\times \sum_{\substack{m \leq N/ep^k \\ (m,dep)=1}} \frac{ \mu(m)}{m^{1+\beta}}P\left( \frac{\log N/ep^k m}{\log N}\right).
\end{align*}
To make the $d$- and $m$-sums independent from one another, we apply M\"obius inversion to remove the condition that $m$ and $d$ are coprime. Interchanging orders of summation and changing variables yet again, we obtain
\begin{align*}
\frac{\zeta(1+\alpha + \beta)}{(\log N)^2}&\sum_{\substack{p^k \leq N \\ k \geq 2}} \frac{(\log p)^2}{p^{(1+\beta)k}}\sum_{\substack{e \leq N/p^k \\ (e,p)=1}} \frac{\mu^2(e)}{e} \sum_{\substack{r \leq N/ep^k \\ (r,ep)=1}} \frac{\mu^2(r)}{p^{2+\alpha + \beta}} \sum_{\substack{d \leq N/er \\ (d,epr)=1}} \frac{\mu(d) h(der)P\left( \frac{\log N/der}{\log N}\right)}{d^{1+\alpha}} \\
&\times \sum_{\substack{m \leq N/ep^kr \\ (m,epr)=1}} \frac{ \mu(m)}{m^{1+\beta}}P\left( \frac{\log N/emp^kr}{\log N}\right).
\end{align*}

Observe that for coprime squarefree integers $m,n$ we have
\begin{align*}
h(mn) = h(m) + h(n) + 2\frac{(\log m)(\log n)}{(\log N)^2}.
\end{align*}
By two applications of this rule and exploiting the additivity of the logarithm we find
\begin{align*}
h(der) = h(d) + h(e) + h(r) + 2 \frac{(\log d)(\log e)}{(\log N)^2} + 2 \frac{(\log d)(\log r)}{(\log N)^2} + 2 \frac{(\log e)(\log r)}{(\log N)^2}.
\end{align*}
Thus, we have six different cases to consider.

\subsubsection{The $m$-sum}

Before we consider these cases, let us estimate the sum over $m$, which does not depend on which of the six cases we are in. For technical reasons we do not proceed immediately to writing the sum as an integral. Instead, it is advantageous to use M\"obius inversion to deal with the coprimality condition $(m,er)= 1$. Our sum becomes
\begin{align*}
\sum_{\substack{f \mid er \\ f \leq N/ep^kr}} \frac{\mu^2(f)}{f^{1+\beta}}\sum_{\substack{m \leq N/efp^kr \\ (m,fp)=1}} \frac{ \mu(m)}{m^{1+\beta}}P\left( \frac{\log N/efmp^kr}{\log N}\right).
\end{align*}
It turns out to be helpful to have this averaging over $f$ at our disposal.

We write
\begin{align*}
P(x) = \sum_{1 \leq j \leq \text{deg}(P)} c_j x^j,
\end{align*}
so our sum becomes
\begin{align*}
\sum_{\substack{f \mid er \\ f \leq N/ep^kr}} \frac{\mu^2(f)}{f^{1+\beta}} \sum_{1 \leq j \leq \text{deg}(P)} \frac{c_j}{(\log N)^j}\sum_{\substack{m \leq N/efp^kr \\ (m,fp)=1}} \frac{ \mu(m)}{m^{1+\beta}}\log (N/efmp^kr)^j.
\end{align*}
Writing the logarithm as an integral and interchanging the order of summation and integration, we have
\begin{align*}
\frac{1}{(\log N)^j}\sum_{\substack{m \leq N/efp^kr \\ (m,fp)=1}} \frac{ \mu(m)}{m^{1+\beta}}\log (N/efmp^kr)^j &= \frac{j!}{(\log N)^j} \frac{1}{2\pi i} \int_{(c)} \left( \frac{N}{efp^k r}\right)^s \bigg(\sum_{(m,fp)=1} \frac{\mu(m)}{m^{1+s+\beta}} \bigg) \frac{ds}{s^{j+1}}.
\end{align*}
Here $c = \frac{1}{\log N}$, say. By an Euler product computation we see that the integrand (apart from the $s^{-j-1}$) is equal to
\begin{align*}
\left( \frac{N}{efp^k r}\right)^s\left(1 - \frac{1}{p^{1+s + \beta}} \right)^{-1}\prod_{q \mid f} \left(1 - \frac{1}{q^{1+s+\beta}}\right)^{-1} \frac{1}{\zeta(1+s+\beta)}.
\end{align*}
We use the classical zero-free region for $\zeta$ and some bounds for $\zeta^{-1}$ close to the $1$-line, see \cite[Ch. III]{titchmarsh}. We can show that the $m$-sum is equal to
\begin{align*}
O \left(\frac{\tau(f)}{\log N} \exp \left(-c \sqrt{\log (N/efp^kr)} \right) \right)
\end{align*}
plus
\begin{align*}
\frac{j!}{(\log N)^j}\operatorname{res}_{s=0}\bigg(\left( \frac{N}{efp^k r}\right)^s\left(1 - \frac{1}{p^{1+s + \beta}} \right)^{-1}\prod_{q \mid f} \left(1 - \frac{1}{q^{1+s+\beta}}\right)^{-1} \frac{1}{\zeta(1+s+\beta)} \frac{1}{s^{j+1}} \bigg).
\end{align*}
We calculate this residue by taking $j$ derivatives in $s$ and then letting $s \rightarrow 0$. One can bound this term by
\begin{align*}
O \left(\frac{(\log \log 3f)^{O_P(1)}}{\log N}\right);
\end{align*}
here we have used the fact that
\begin{align*}
\sum_{p \mid n} \frac{(\log p)^k}{p} \ll_k (\log \log 3n)^k.
\end{align*}
It is also important that $|\beta| \asymp \frac{1}{\log N}$. We therefore obtain a total bound of
\begin{align*}
\sum_{\substack{m \leq N/efp^kr \\ (m,fp)=1}} \frac{ \mu(m)}{m^{1+\beta}}P\left( \frac{\log N/efmp^kr}{\log N}\right) \ll \frac{\tau(f) (\log \log 3f)^{O(1)}}{\log N},
\end{align*}
and upon summing over $f$ we obtain
\begin{align*}
\sum_{\substack{m \leq N/ep^kr \\ (m,epr)=1}} \frac{ \mu(m)}{m^{1+\beta}}P\left( \frac{\log N/emp^kr}{\log N}\right) \ll \frac{(\log \log 3er)^{O(1)}}{\log N}.
\end{align*}

\subsubsection{The $d$-sums}

Let us now turn to the various cases for the sum over $d$. Due to symmetry, there are actually only two $d$-sums we need to consider:
\begin{align*}
\sum_{\substack{d \leq N/er \\ (d,epr)=1}} \frac{\mu(d) h(d)P\left( \frac{\log N/der}{\log N}\right)}{d^{1+\alpha}}
\end{align*}
as well as
\begin{align*}
\frac{1}{\log N}\sum_{\substack{d \leq N/er \\ (d,epr)=1}} \frac{\mu(d) (\log d)P\left( \frac{\log N/der}{\log N}\right)}{d^{1+\alpha}}.
\end{align*}

We start with the sum involving $h(d)$. Opening up $h(d)$ and interchanging the order of summation, we have
\begin{align*}
\frac{1}{(\log N)^2}\mathop{\sum \sum}_{\substack{q_1q_2 \leq N/er \\ q_1 \neq q_2 \\ (q_1q_2,epr)=1}} \frac{(\log q_1)(\log q_2)}{q_1^{1+\alpha}q_2^{1+\alpha}} \sum_{\substack{d \leq N/erq_1q_2 \\ (d,epq_1q_2r)=1}} \frac{\mu(d) P\left( \frac{\log N/derq_1q_2}{\log N}\right)}{d^{1+\alpha}}.
\end{align*}
This inner sum on $d$ is almost identical to the $m$-sum we estimated earlier. We similarly use M\"obius inversion to handle the condition $(d,er) = 1$, and then write the sum as an integral and use the zero-free region for $\zeta$. We find the sum on $d$ is
\begin{align*}
\ll \frac{(\log \log 3er)^{O(1)}}{\log N},
\end{align*}
and we obtain the sum bound upon summing over $q_1$ and $q_2$.

Let us now turn to the other type of $d$-sum. We wish to estimate
\begin{align*}
\frac{1}{\log N}\sum_{\substack{d \leq N/er \\ (d,epr)=1}} \frac{\mu(d) (\log d)P\left( \frac{\log N/der}{\log N}\right)}{d^{1+\alpha}}.
\end{align*}
Since $d$ is squarefree, we may write
\begin{align*}
\log (d) = \sum_{\mathfrak{p} \mid d} \log \mathfrak{p},
\end{align*}
where $\mathfrak{p}$ is a prime (we have used all the Roman letters traditionally associated with primes, so we have to branch out a bit). We interchange the order of the $\mathfrak{p}$- and $d$-summations, and then argue as before. The bounds we obtain are of a similar shape.

\subsubsection{Clean-up}

It is now a routine matter to sum up all of the bounds, obtaining a final bound of
\begin{align*}
T \widehat{\Phi}(0) \zeta(1+\alpha + \beta) \mathop{\sum \sum}_{d,e \leq N} \frac{b_d c_e}{[d,e]} \frac{(d,e)^{\alpha + \beta}}{d^\alpha e^\beta} \ll T \frac{(\log \log N)^{O(1)}}{(\log N)^2}.
\end{align*}

Observe that we could have afforded to lose one more logarithm, and we still would have had an acceptable bound. The reason for this is that since we are looking at Feng's $K = 2$, we get to divide by two logarithms in order to make the coefficients bounded. However, the difference between Feng's $K=2$ and our $K=2$ is like a Conrey mollifier on average, which already has bounded coefficients. This accounts for our bound being one logarithm smaller than we actually need it to be.

\section{Specializing the coefficients} \label{sectionspecial}
Let us now take
\[
a_n = \mathfrak{a}_n P_a\bigg(\frac{\log(N/n)}{\log N}\bigg) \quad \textnormal{and} \quad b_n = \mathfrak{b}_n P_b\bigg(\frac{\log(N/n)}{\log N}\bigg), 
\]
where $P_{(\cdot)}$ is a polynomial associated to the coefficients $a$ or $b$ satisfying some conditions such as $P_{(\cdot)}(0)=0$ and $P_{(\cdot)}(1)=1$. For the moment $\mathfrak{a}_n$ and $\mathfrak{b}_n$ will be arbitrary coefficients that will later have certain parameters that will also affect $P_{(\cdot)}$. By the Mellin representation of $P$ we may write
\[
P_a \bigg(\frac{\log(N/n)}{\log N}\bigg) = \sum_{i=0}^{\operatorname{deg} P_a} \frac{p_{a,i}}{\log^i N} (\log(N/n))^i = \sum_i \frac{p_{a,i}i!}{\log^i N} \frac{1}{2 \pi i} \int_{(1)} \bigg( \frac{N}{n} \bigg)^{s} \frac{ds}{s^{i+1}},
\]
as well as
\[
P_b \bigg(\frac{\log(N/n)}{\log N}\bigg) = \sum_{j=0}^{\operatorname{deg} P_b} \frac{p_{b,j}}{\log^j N} (\log(N/n))^j = \sum_j \frac{p_{b,j}j!}{\log^j N} \frac{1}{2 \pi i} \int_{(1)} \bigg( \frac{N}{n} \bigg)^{u} \frac{du}{u^{j+1}}.
\]
We go back to the right-hand side of Theorem \ref{meanvalueintegral} and write
\begin{align} \label{presumS}
I(\alpha,\beta) &= \sumtwo_{i,j} \frac{p_{a,i}p_{b,j}i!j!}{\log^{i+j}N}\frac{1}{(2\pi i)^2}\int_{(1)}\int_{(1)} \sumtwo_{1 \le d,e \le \infty}\frac{\mathfrak{a}_d \overline{\mathfrak{b}_e}}{[d,e]} \frac{(d,e)^{\alpha+\beta}}{d^{\alpha+s} e^{\beta+u}} \nonumber \\
& \quad \times \int_{-\infty}^\infty \bigg(\zeta(1+\alpha+\beta)+\zeta(1-\alpha-\beta)\bigg(\frac{2\pi de}{t(d,e)^2}\bigg)^{\alpha+\beta} \bigg)\Phi\bigg(\frac{t}{T}\bigg)dt N^{s+u} \frac{ds}{s^{i+1}}\frac{du}{u^{j+1}} + O(\mathcal{E}).
\end{align}
The double sum over $d$ and $e$, which we call $\mathcal{S}$, requires a closer look. We will handle the first term of the $t$-integral, that is the term involving $\zeta(1+\alpha+\beta)$, as the second term will have the symmetries $\alpha \to -\beta$ and $\beta \to -\alpha$ as well as a premultiplication by $T^{-\alpha-\beta}$. 

We have reached the point where we need some structure on $\mathfrak{a}$ and $\mathfrak{b}$ that we may exploit to our advantage. 

\subsection{The linear case $d=1$}

To get a taste of things to come, let us first assume that
\[
\mathfrak{a}_n := \frac{(\mu \star \Lambda_1^{\star \ell_1})(n)}{\log^{\ell_1} N} = \frac{(\mu \star (\mu \star \log) \star \cdots \star (\mu \star \log))(n)}{\log^{\ell_1} N} = \frac{(\mu^{\star \ell_1 + 1} \star \log^{\star \ell_1})(n)}{\log^{\ell_1} N},
\]
as well as
\[
\mathfrak{b}_n := \frac{(\mu \star \Lambda_1^{\star \ell_2})(n)}{\log^{\ell_2} N} = \frac{(\mu \star (\mu \star \log) \star \cdots \star (\mu \star \log))(n)}{\log^{\ell_2} N} = \frac{(\mu^{\star \ell_2 + 1} \star \log^{\star \ell_2})(n)}{\log^{\ell_2} N}.
\]
This means that (leaving out the denominator $(\log N)^{\ell_1+\ell_2}$ for $I_{1}$)
\begin{align*}
\mathcal{S}_1 :&= \sumtwo_{1 \le d,e \le \infty}\frac{\mathfrak{a}_d \overline{\mathfrak{b}_e}}{[d,e]} \frac{(d,e)^{\alpha+\beta}}{d^{\alpha+s} e^{\beta+u}} = \sumtwo_{d,e }\frac{(\mu^{\star \ell_1 + 1} \star \log^{\star \ell_1})(d) (\mu^{\star \ell_2 + 1} \star \log^{\star \ell_2})(e)}{[d,e]} \frac{(d,e)^{\alpha+\beta}}{d^{\alpha+s} e^{\beta+u}}.
\end{align*}
Write $d=d_0 d_1 \cdots d_{\ell_1}$ and $e=e_0 e_1 \cdots e_{\ell_2}$ so that
\begin{align*}
\mathcal{S}_1 = \sumtwo_{\substack{d_0, d_1, \cdots, d_{\ell_1} \\ e_0, e_1, \cdots, e_{\ell_2}}}
\frac{\mu^{\star \ell_1+1}(d_0)\log d_1 \cdots \log d_{\ell_1}\mu^{\star \ell_2+1}(e_0)\log e_1 \cdots \log e_{\ell_2}}{[d_0 d_1 \cdots d_{\ell_1}, e_0 e_1 \cdots e_{\ell_2}]} \frac{(d_0 d_1 \cdots d_{\ell_1}, e_0 e_1 \cdots e_{\ell_2})^{\alpha+\beta}}{(d_0 d_1 \cdots d_{\ell_1})^{\alpha+s} (e_0 e_1 \cdots e_{\ell_2})^{\beta+u}}.
\end{align*}
We now employ the incredibly useful formula
\begin{align} \label{cauchyd1}
\log x = - \frac{\partial}{\partial \gamma} \frac{1}{x^\gamma} \bigg|_{\gamma=0} = - \frac{1}{2 \pi i} \oint \frac{1}{x^z}\frac{dz}{z^2},
\end{align}
where the contour of integration is a small circle around the origin. This leads us to
\begin{align*}
\mathcal{S}_1 &= (-1)^{\ell_1+\ell_2} \frac{1}{(2\pi i)^{\ell_1}}\ointdots \frac{1}{(2\pi i)^{\ell_2}}\ointdots \sumtwo_{\substack{d_0, d_1, d_2, \cdots, d_{\ell_1} \\ e_0, e_1, e_2, \cdots, e_{\ell_2}}}
\frac{\mu^{\star \ell_1+1}(d_0)\mu^{\star \ell_2+1}(e_0)}{[d_0 d_1 d_2 \cdots d_{\ell_1}, e_0 e_1 e_2 \cdots e_{\ell_2}]} \\
& \times \frac{(d_0 d_1 d_2 \cdots d_{\ell_1}, e_0 e_1 e_2 \cdots e_{\ell_2})^{\alpha+\beta}}{d_0^{\alpha+s} d_1^{\alpha+s+z_1} d_2^{\alpha+s+z_2} \cdots d_{\ell_1}^{\alpha+s+z_{\ell_1}} e_0^{\beta+u} e_1^{\beta+u+w_1} e_2^{\beta+u+w_2} \cdots e_{\ell_2}^{\beta+u+w_{\ell_2}}} \frac{dz_1}{z_1^2} \cdots \frac{dz_{\ell_1}}{z_{\ell_1}^2} \frac{dw_1}{w_1^2} \cdots \frac{dw_{\ell_2}}{w_{\ell_2}^2}.
\end{align*}
The advantage of this formula is that in the inner sum of $\mathcal{S}_1$ we now have products of multiplicative functions (the $\mu$'s) and completely multiplicative functions (the $d^x$'s and the $e^y$'s) instead of $\log$'s and $\Lambda$'s. The next step is to write this as an Euler product so that
\begin{align*}
\mathcal{S}_1 &= \frac{(-1)^{\ell_1}}{(2\pi i)^{\ell_1}}\ointdots \frac{(-1)^{\ell_2}}{(2\pi i)^{\ell_2}}\ointdots \prod_p \sumtwo_{\substack{p^{d_0}, p^{d_1}, p^{d_2}, \cdots, p^{d_{\ell_1}} \\ p^{e_0}, p^{e_1}, p^{e_2}, \cdots, p^{e_{\ell_2}}}}
\frac{\mu^{\star \ell_1+1}(p^{d_0})\mu^{\star \ell_2+1}(p^{e_0})}{[p^{d_0} p^{d_1} p^{d_2} \cdots p^{d_{\ell_1}}, p^{e_0} p^{e_1} p^{e_2} \cdots p^{e_{\ell_2}}]} \\
& \quad \times \frac{(p^{d_0} p^{d_1} p^{d_2} \cdots p^{d_{\ell_1}}, p^{e_0} p^{e_1} p^{e_2} \cdots p^{e_{\ell_2}})^{\alpha+\beta}}{(p^{d_0})^{\alpha+s} (p^{d_1})^{\alpha+s+z_1} \cdots (p^{d_{\ell_1}})^{\alpha+s+z_{\ell_1}} (p^{e_0})^{\beta+u} (p^{e_1})^{\beta+u+w_1} \cdots (p^{e_{\ell_2}})^{\beta+u+w_{\ell_2}}} \frac{dz_1}{z_1^2} \cdots \frac{dz_{\ell_1}}{z_{\ell_1}^2} \frac{dw_1}{w_1^2} \cdots \frac{dw_{\ell_2}}{w_{\ell_2}^2}.
\end{align*}
To recover the constant ($p^0$) and linear terms ($p^1$), we restrict the choices of the $d$'s and the $e$'s to the following nine possibilities. To enumerate them in a simple and fast manner, we shall denote the choices by the labels $\{d_0,d_i,e_0,e_j\}$ where $i=1,2,\cdots,\ell_1$ and $j=1,2,\cdots,\ell_2$.
\begin{enumerate}
\item[(1)] For $\{0,0,0,0\}$ we get 1 in the Euler product. This is our constant term.
\item[(2)] The case $\{1,0,0,0\}$ yields
\begin{align*}
%\frac{\mu^{\star \ell_1+1}(p^{d_0})}{[p^{d_0}, 1]} 
%\frac{(p^{d_0}, 1)^{\alpha+\beta}}{(p^{d_0})^{\alpha+s}}
%= 
\frac{\mu^{\star \ell_1+1}(p)}{[p, 1]} 
\frac{(p, 1)^{\alpha+\beta}}{p^{\alpha+s}}
= -\frac{\ell_1+1}{p^{1+\alpha+s}}.
\end{align*}
\item[(3)] The case $\{1,0,1,0\}$ yields
\begin{align*}
%\frac{\mu^{\star \ell_1+1}(p^{d_0})\mu^{\star \ell_2+1}(p^{e_0})}{[p^{d_0}, p^{e_0}]} 
%\frac{(p^{d_0}, p^{e_0})^{\alpha+\beta}}{(p^{d_0})^{\alpha+s}(p^{e_0})^{\beta+u}}
%= 
\frac{\mu^{\star \ell_1+1}(p)\mu^{\star \ell_2+1}(p)}{[p, p]} 
\frac{(p, p)^{\alpha+\beta}}{p^{\alpha+s}p^{\beta+u}}
= 
\frac{(\ell_1+1)(\ell_2+1)}{p^{1+s+u}}.
\end{align*}
\item[(4)] The case $\{1,0,0,1\}$ yields
\begin{align*}
%\frac{\mu^{\star \ell_1+1}(p^{d_0})}{[p^{d_0}, p^{e_j}]} 
%\frac{(p^{d_0}, p^{e_j})^{\alpha+\beta}}{(p^{d_0})^{\alpha+s} (p^{e_j})^{\beta+u+w_j}}
%=
\frac{\mu^{\star \ell_1+1}(p)}{[p, p]} 
\frac{(p, p)^{\alpha+\beta}}{p^{\alpha+s} p^{\beta+u+w_j}}
=
-\frac{\ell_1+1}{p^{1+s+u+w_j}}.
\end{align*}
\item[(5)] The case $\{0,1,0,0\}$ yields
\begin{align*}
%\frac{1}{[p^{d_i}, 1]} 
%\frac{(p^{d_i}, 1)^{\alpha+\beta}}{((p^{d_i})^{\alpha+s+z_i})}
%=
\frac{1}{[p, 1]} 
\frac{(p, 1)^{\alpha+\beta}}{p^{\alpha+s+z_i}}
=
\frac{1}{p^{1+\alpha+s+z_i}}.
\end{align*}
\item[(6)] By symmetry with $\{1,0,0,1\}$, the case $\{0,1,1,0\}$ yields
\begin{align*}
-\frac{\ell_2+1}{p^{1+s+u+z_i}}.
\end{align*}
\item[(7)] The case $\{0,1,0,1\}$ is the most difficult as it mixes the variables $z$ and $w$. We have
\begin{align*}
%\frac{1}{[p^{d_i}, p^{e_j}]} 
%\frac{(p^{d_i}, p^{e_j})^{\alpha+\beta}}{((p^{d_i})^{\alpha+s+z_i}) ((p^{e_j})^{\beta+u+w_j})}
%=
\frac{1}{[p, p]} 
\frac{(p, p)^{\alpha+\beta}}{p^{\alpha+s+z_i} p^{\beta+u+w_j}}
=
\frac{1}{p^{1+s+u+z_i+w_j}}.
\end{align*}
\item[(8)] By symmetry with $\{1,0,0,0\}$, the case $\{0,0,1,0\}$ yields
\begin{align*}
-\frac{\ell_2+1}{p^{1+\beta+u}}.
\end{align*}
\item[(9)] Lastly, the case $\{0,0,0,1\}$ is symmetric with respect to $\{0,1,0,0\}$ and hence we get
\begin{align*}
\frac{1}{p^{1+\beta+u+w_j}}.
\end{align*}
\end{enumerate}
If we now insert these terms into $\mathcal{S}_1$, then we arrive at
\begin{align*}
\mathcal{S}_1 = \frac{(-1)^{\ell_1}}{(2\pi i)^{\ell_1}} & \ointdots \frac{(-1)^{\ell_2}}{(2\pi i)^{\ell_2}}\ointdots \prod_p \bigg(1-\frac{\ell_1+1}{p^{1+\alpha+s}}+\frac{(\ell_1+1)(\ell_2+1)}{p^{1+s+u}} -\sum_{j=1}^{\ell_2} \frac{\ell_1+1}{p^{1+s+u+w_j}} \\
& + \sum_{i=1}^{\ell_1} \frac{1}{p^{1+\alpha+s+z_i}} - \sum_{i=1}^{\ell_1} \frac{\ell_2+1}{p^{1+s+u+z_i}} + \sum_{i=1}^{\ell_1} \sum_{j=1}^{\ell_2} \frac{1}{p^{1+s+u+z_i+w_j}} -\frac{\ell_2+1}{p^{1+\beta+u}} + \sum_{j=1}^{\ell_2} \frac{1}{p^{1+\beta+u+w_j}} \bigg)\\ 
& \quad \times A_{\alpha,\beta}(z_1,\cdots,z_{\ell_1},w_1,\cdots,w_{\ell_2};s,u) \frac{dz_1}{z_1^2} \cdots \frac{dz_{\ell_1}}{z_{\ell_1}^2} \frac{dw_1}{w_1^2} \cdots \frac{dw_{\ell_2}}{w_{\ell_2}^2}.
\end{align*}
We can write this more concisely with an autocorrelation-type ratio of zeta functions \cite{cfkrs, cfz, cs} as 
\begin{align} \label{autocorrelationratio}
\mathcal{S}_1 &= (-1)^{\ell_1}(-1)^{\ell_2}\frac{1}{(2\pi i)^{\ell_1}}\ointdots \frac{1}{(2\pi i)^{\ell_2}}\ointdots  \frac{\zeta(1+s+u)^{(\ell_1+1)(\ell_2+1)}}{\zeta(1+\alpha+s)^{\ell_1+1}\zeta(1+\beta+u)^{\ell_2+1}} \nonumber \\
& \quad  \times \frac{ (\prod_{i=1}^{\ell_1} \zeta(1+\alpha+s+z_i)) (\prod_{j=1}^{\ell_2} \zeta(1+\beta+u+w_j)) (\prod_{i=1}^{\ell_1} \prod_{j=1}^{\ell_2} \zeta(1+s+u+z_i+w_j))}{(\prod_{j=1}^{\ell_2} \zeta(1+s+u+w_j)^{\ell_1+1}) (\prod_{i=1}^{\ell_1} \zeta(1+s+u+z_i)^{\ell_2+1})}  \nonumber \\
& \quad  \times A_{\alpha,\beta}(z_1,\cdots,z_{\ell_1},w_1,\cdots,w_{\ell_2};s,u) \frac{dz_1}{z_1^2} \cdots \frac{dz_{\ell_1}}{z_{\ell_1}^2} \frac{dw_1}{w_1^2} \cdots \frac{dw_{\ell_2}}{w_{\ell_2}^2}.
\end{align}
Here $A = A_{\alpha,\beta}(z_1,\cdots,z_{\ell_1},w_1,\cdots,w_{\ell_2};s,u)$ is an arithmetical factor that is absolutely convergent in some half-plane containing the origin. Examination of the nine cases above indicates that it is given by
\begin{align*}
A &= \prod_p \bigg(1-\frac{1}{p^{1+s+u}}\bigg)^{(\ell_1+1)(\ell_2+1)} \bigg(1- \frac{1}{p^{1+\alpha+s}} \bigg)^{-(\ell_1+1)}\bigg(1- \frac{1}{p^{1+\beta+u}} \bigg)^{-(\ell_2+1)}  \\
& \quad  \times 
\prod_{i=1}^{\ell_1}\prod_{j=1}^{\ell_2} \bigg(1-\frac{1}{p^{1+s+u+z_i+w_j}}\bigg)
\prod_{j=1}^{\ell_2} \bigg(1-\frac{1}{p^{1+s+u+w_j}}\bigg)^{-(\ell_1+1)}
\prod_{i=1}^{\ell_1} \bigg(1-\frac{1}{p^{1+s+u+z_i}}\bigg)^{-(\ell_2+1)}
\\
&  \quad \times \prod_{i=1}^{\ell_1} \bigg(1-\frac{1}{p^{1+\alpha+s+z_i}} \bigg) \prod_{j=1}^{\ell_2} \bigg(1-\frac{1}{p^{1+\beta+u+w_j}} \bigg)  \bigg\{1-\frac{\ell_1+1}{p^{1+\alpha+s}}+\frac{(\ell_1+1)(\ell_2+1)}{p^{1+s+u}} -\sum_{j=1}^{\ell_2} \frac{\ell_1+1}{p^{1+s+u+w_j}} \\
& + \sum_{i=1}^{\ell_1} \frac{1}{p^{1+\alpha+s+z_i}} - \sum_{i=1}^{\ell_1} \frac{\ell_2+1}{p^{1+s+u+z_i}} + \sum_{i=1}^{\ell_1} \sum_{j=1}^{\ell_2} \frac{1}{p^{1+s+u+z_i+w_j}} -\frac{\ell_2+1}{p^{1+\beta+u}} + \sum_{j=1}^{\ell_2} \frac{1}{p^{1+\beta+u+w_j}} \bigg\}.
\end{align*}

We may now go back to $I_1$, the first half of $I$, and tidy up a bit so that
\begin{align} \label{I1clean}
I_1(\alpha,\beta) &= T\widehat{\Phi}(0)\sumtwo_{i,j} \frac{p_{\ell_1,i}p_{\ell_2,j}i!j!}{\log^{i+j}N}\frac{1}{(2\pi i)^2}\int_{(1)}\int_{(1)} \frac{(-1)^{\ell_1}(-1)^{\ell_2}}{\log^{\ell_1+\ell_2} N} \nonumber \\
& \quad  \times \frac{1}{(2\pi i)^{\ell_1}}\ointdots \frac{1}{(2\pi i)^{\ell_2}}\ointdots  \frac{\zeta(1+s+u)^{(\ell_1+1)(\ell_2+1)} \zeta(1+\alpha+\beta) }{\zeta(1+\alpha+s)^{\ell_1+1}\zeta(1+\beta+u)^{\ell_2+1}} \nonumber \\
& \quad  \times  \bigg(\prod_{i=1}^{\ell_1} \prod_{j=1}^{\ell_2} \zeta(1+s+u+z_i+w_j) \bigg) \bigg(\prod_{i=1}^{\ell_1} \frac{\zeta(1+\alpha+s+z_i)}{\zeta(1+s+u+z_i)^{\ell_2+1}}\bigg) \bigg(\prod_{j=1}^{\ell_2} \frac{\zeta(1+\beta+u+w_j)}{\zeta(1+s+u+w_j)^{\ell_1+1}}\bigg) \nonumber \\
& \quad  \times A_{\alpha,\beta}(z_1,\cdots,z_{\ell_1},w_1,\cdots,w_{\ell_2};s,u) \frac{dz_1}{z_1^2} \cdots \frac{dz_{\ell_1}}{z_{\ell_1}^2} \frac{dw_1}{w_1^2} \cdots \frac{dw_{\ell_2}}{w_{\ell_2}^2} N^{s+u} \frac{ds}{s^{i+1}}\frac{du}{u^{j+1}} + O(\mathcal{E}_3),
\end{align}
where $\mathcal{E}_3$ is the third case of Theorem \ref{meanvalueintegral} and where
\begin{align} \label{Arithemticclean}
A_{\alpha,\beta}(\mathbf{z},\mathbf{w};s,u) = \prod_p &\bigg\{ \frac{(1-\tfrac{1}{p^{1+s+u}})^{(\ell_1+1)(\ell_2+1)}}{(1- \tfrac{1}{p^{1+\alpha+s}})^{(\ell_1+1)}(1- \tfrac{1}{p^{1+\beta+u}})^{(\ell_2+1)}}  \bigg( \prod_{i=1}^{\ell_1}\prod_{j=1}^{\ell_2} \bigg( 1-\frac{1}{p^{1+s+u+z_i+w_j}}\bigg) \bigg) \nonumber \\
& \times \bigg(\prod_{i=1}^{\ell_1} \frac{1-\tfrac{1}{p^{1+\alpha+s+z_i}}}{(1-\tfrac{1}{p^{1+s+u+z_i}})^{(\ell_2+1)}}\bigg) \bigg(\prod_{j=1}^{\ell_2} \frac{1-\tfrac{1}{p^{1+\beta+u+w_j}}}{(1-\tfrac{1}{p^{1+s+u+w_j}})^{(\ell_1+1)}} \bigg) \nonumber \\
& \times \bigg[ 1-\frac{\ell_1+1}{p^{1+\alpha+s}} -\frac{\ell_2+1}{p^{1+\beta+u}} + \frac{(\ell_1+1)(\ell_2+1)}{p^{1+s+u}} -\sum_{j=1}^{\ell_2} \frac{\ell_1+1}{p^{1+s+u+w_j}} - \sum_{i=1}^{\ell_1} \frac{\ell_2+1}{p^{1+s+u+z_i}} \nonumber \\
&  \quad \quad \, + \sum_{i=1}^{\ell_1} \frac{1}{p^{1+\alpha+s+z_i}} + \sum_{j=1}^{\ell_2} \frac{1}{p^{1+\beta+u+w_j}} + \sum_{i=1}^{\ell_1} \sum_{j=1}^{\ell_2} \frac{1}{p^{1+s+u+z_i+w_j}} \bigg] \bigg\}.
\end{align}

Let us define the integrand to be
\begin{align*}
\mathfrak{M}_{\alpha,\beta}(\mathbf{z},\mathbf{w};s,u) &= \frac{\zeta(1+s+u)^{(\ell_1+1)(\ell_2+1)} \zeta(1+\alpha+\beta) }{\zeta(1+\alpha+s)^{\ell_1+1}\zeta(1+\beta+u)^{\ell_2+1}}A_{\alpha,\beta}(z_1,\cdots,z_{\ell_1},w_1,\cdots,w_{\ell_2};s,u)\\
& \quad  \times \bigg(\prod_{i=1}^{\ell_1} \prod_{j=1}^{\ell_2} \zeta(1+s+u+z_i+w_j) \bigg) \bigg(\prod_{i=1}^{\ell_1} \frac{\zeta(1+\alpha+s+z_i)}{\zeta(1+s+u+z_i)^{\ell_2+1}}\bigg) \bigg(\prod_{j=1}^{\ell_2} \frac{\zeta(1+\beta+u+w_j)}{\zeta(1+s+u+w_j)^{\ell_1+1}}\bigg).
\end{align*}
Our first observation is that
\begin{align*}
\mathfrak{M}_{\alpha,\beta}(\mathbf{z},\mathbf{w};\beta,\alpha) &= \zeta(1+\alpha+\beta)^{\ell_1 \ell_2}A_{\alpha,\beta}(z_1,\cdots,z_{\ell_1},w_1,\cdots,w_{\ell_2};\beta,\alpha)\\
& \quad  \times \bigg(\prod_{i=1}^{\ell_1} \prod_{j=1}^{\ell_2} \zeta(1+\alpha+\beta+z_i+w_j) \bigg) \bigg(\prod_{i=1}^{\ell_1} \frac{1}{\zeta(1+\alpha+\beta+z_i)^{\ell_2}}\bigg) \bigg(\prod_{j=1}^{\ell_2} \frac{1}{\zeta(1+\alpha+\beta+w_j)^{\ell_1}}\bigg),
\end{align*}
and hence at $\mathbf{z}=\mathbf{w}=\mathbf{0}$ we end up with
\begin{align*}
\mathfrak{M}_{\alpha,\beta}(\mathbf{0},\mathbf{0};\beta,\alpha) &= \zeta(1+\alpha+\beta)^{\ell_1 \ell_2}A_{\alpha,\beta}(0,\cdots,0,0,\cdots,0;\beta,\alpha)\\
&  \quad \times \bigg(\prod_{i=1}^{\ell_1} \prod_{j=1}^{\ell_2} \zeta(1+\alpha+\beta) \bigg) \bigg(\prod_{i=1}^{\ell_1} \frac{1}{\zeta(1+\alpha+\beta)^{\ell_2}}\bigg) \bigg(\prod_{j=1}^{\ell_2} \frac{1}{\zeta(1+\alpha+\beta)^{\ell_1}}\bigg) \\
&= A_{\alpha,\beta}(\mathbf{0},\mathbf{0};\beta,\alpha).
\end{align*}
On the other hand, for the arithmetical factor $A$ we find that
\begin{align*}
A_{\alpha,\beta}(\mathbf{z},\mathbf{w};\beta,\alpha) = \prod_p &\bigg\{\bigg(1-\frac{1}{p^{1+\alpha+\beta}}\bigg)^{\ell_1 \ell_2 - 1} \bigg( \prod_{i=1}^{\ell_1}\prod_{j=1}^{\ell_2} \bigg( 1-\frac{1}{p^{1+\alpha+\beta+z_i+w_j}}\bigg) \bigg) \\
& \times \bigg(\prod_{i=1}^{\ell_1} \frac{1}{(1-\tfrac{1}{p^{1+\alpha+\beta+z_i}})^{\ell_2}}\bigg) \bigg(\prod_{j=1}^{\ell_2} \frac{1}{(1-\tfrac{1}{p^{1+\alpha+\beta+w_j}})^{\ell_1}} \bigg)  \\
& \times \bigg[ 1-\frac{\ell_1+1}{p^{1+\alpha+\beta}} -\frac{\ell_2+1}{p^{1+\alpha+\beta}} + \frac{(\ell_1+1)(\ell_2+1)}{p^{1+\alpha+\beta}} -\sum_{j=1}^{\ell_2} \frac{\ell_1+1}{p^{1+\alpha+\beta+w_j}} - \sum_{i=1}^{\ell_1} \frac{\ell_2+1}{p^{1+\alpha+\beta+z_i}} \\
&  \quad \quad \, + \sum_{i=1}^{\ell_1} \frac{1}{p^{1+\alpha+\beta+z_i}} + \sum_{j=1}^{\ell_2} \frac{1}{p^{1+\alpha+\beta+w_j}} + \sum_{i=1}^{\ell_1} \sum_{j=1}^{\ell_2} \frac{1}{p^{1+\alpha+\beta+z_i+w_j}} \bigg] \bigg\},
\end{align*}
and hence at $\mathbf{z}=\mathbf{w}=\mathbf{0}$ we arrive at
\begin{align} \label{ArithmeticAt0}
A_{\alpha,\beta}(\mathbf{0},\mathbf{0};\beta,\alpha) = \prod_p &\bigg\{\bigg(1-\frac{1}{p^{1+\alpha+\beta}}\bigg)^{-1} \bigg[ 1 -\frac{1}{p^{1+\alpha+\beta}} \bigg] \bigg\} = 1.
\end{align}
The property that $\mathfrak{M}_{\alpha,\beta}(\mathbf{0},\mathbf{0};\beta,\alpha) = A_{\alpha,\beta}(\mathbf{0},\mathbf{0};\beta,\alpha) = 1$ will become useful shortly.

We will find it expedient to examine the logarithm of $A$ in order to turn the products into sums. One has
\begin{align} \label{logderivativeA}
\log A_{\alpha,\beta}(\mathbf{z},\mathbf{w};s,u) = \sum_p &\bigg\{ \log \frac{(1-\tfrac{1}{p^{1+s+u}})^{(\ell_1+1)(\ell_2+1)}}{(1- \tfrac{1}{p^{1+\alpha+s}})^{(\ell_1+1)}(1- \tfrac{1}{p^{1+\beta+u}})^{(\ell_2+1)}} + \sum_{i=1}^{\ell_1}\sum_{j=1}^{\ell_2} \log \bigg( 1-\frac{1}{p^{1+s+u+z_i+w_j}}\bigg) \nonumber \\
& + \sum_{i=1}^{\ell_1} \log \frac{1-\tfrac{1}{p^{1+\alpha+s+z_i}}}{(1-\tfrac{1}{p^{1+s+u+z_i}})^{(\ell_2+1)}} + \sum_{j=1}^{\ell_2} \log \frac{1-\tfrac{1}{p^{1+\beta+u+w_j}}}{(1-\tfrac{1}{p^{1+s+u+w_j}})^{(\ell_1+1)}} \nonumber \\
& + \log \bigg[ 1-\frac{\ell_1+1}{p^{1+\alpha+s}} -\frac{\ell_2+1}{p^{1+\beta+u}} + \frac{(\ell_1+1)(\ell_2+1)}{p^{1+s+u}} -\sum_{j=1}^{\ell_2} \frac{\ell_1+1}{p^{1+s+u+w_j}} - \sum_{i=1}^{\ell_1} \frac{\ell_2+1}{p^{1+s+u+z_i}} \nonumber \\
&  \quad \quad \, + \sum_{i=1}^{\ell_1} \frac{1}{p^{1+\alpha+s+z_i}} + \sum_{j=1}^{\ell_2} \frac{1}{p^{1+\beta+u+w_j}} + \sum_{i=1}^{\ell_1} \sum_{j=1}^{\ell_2} \frac{1}{p^{1+s+u+z_i+w_j}} \bigg] \bigg\}.
\end{align}

For all cosmetic purposes, \eqref{I1clean} combined with \eqref{Arithemticclean} and \eqref{ArithmeticAt0} and supplemented with \eqref{logderivativeA} is as far as we can go before things take an ugly turn due to the combinatorics involved in this situation. Indeed, this is the price to pay for having used \eqref{cauchyd1}. However, it is customary \cite{bcy, krz02, rrz01, youngshort} to decouple the complex variables $s$ and $u$ and then perform the sums over the indices $i$ and $j$ to give cleaner terms.  \\

We will start with $\ell_1 = \ell_2 =1$, and $\ell_1 = \ell_2 =2$ in $d=1$, then move on to $d=2$.

\subsection{Choosing the truncation $\ell$}
\subsubsection{The case $\ell_1=\ell_2=1$}
In order to keep things readable, we shall specialize to $\ell_1 = \ell_2 = 1$ and then delineate the path to general $\ell_1$ and $\ell_2$. In this simpler case, a direct residue calculus computation shows that
\begin{align*}
\frac{1}{(2\pi i)^2}  \oint\oint & \mathfrak{M}_{\alpha,\beta}(z_1,w_1;s,u) \frac{dz_1}{z_1^2} \frac{dw_1}{w_1^2} = \frac{\zeta(1+s+u)\zeta(1+\alpha+\beta)}{\zeta(1+\alpha+s)\zeta(1+\beta+u)} \\
& \times \bigg[ A_{\alpha,\beta}(0,0;s,u)\bigg\{\frac{\zeta''}{\zeta}(1+s+u) + \frac{\zeta'}{\zeta}(1+\alpha+s)\frac{\zeta'}{\zeta}(1+\beta+u) \\
& \quad \quad \quad \quad - \frac{\zeta'}{\zeta}(1+s+u)\frac{\zeta'}{\zeta}(1+\beta+u) - \frac{\zeta'}{\zeta}(1+s+u)\frac{\zeta'}{\zeta}(1+\alpha+s) \bigg\} \\
& \; + A_{\alpha,\beta}^{(1,0)}(0,0;s,u) \bigg\{\frac{\zeta'}{\zeta}(1+\beta+u) - \frac{\zeta'}{\zeta}(1+s+u) \bigg\} \\
& \; + A_{\alpha,\beta}^{(0,1)}(0,0;s,u) \bigg\{\frac{\zeta'}{\zeta}(1+\alpha+s) - \frac{\zeta'}{\zeta}(1+s+u) \bigg\} + A_{\alpha,\beta}^{(1,1)}(0,0;s,u) \bigg].
\end{align*}
Here the notation $f^{(a,b)}(x,y)$ indicates the $a$-th derivative with respect to $x$ and the $b$-th derivative with respect to $y$. Therefore
\begin{align*}
I_1(\alpha,\beta) &= \frac{T\widehat{\Phi}(0)}{\log^{2} N}\sumtwo_{i,j} \frac{p_{\ell_1,i}p_{\ell_2,j}i!j!}{(\log N)^{i+j}} J_{1} + O(\mathcal{E}_3),
\end{align*}
where
\begin{align*}
J_{1} = \frac{1}{(2\pi i)^2} \int_{(\delta)} & \int_{(\delta)} \frac{\zeta(1+s+u)\zeta(1+\alpha+\beta)}{\zeta(1+\alpha+s)\zeta(1+\beta+u)} \\
& \times \bigg[ A_{\alpha,\beta}(0,0;s,u)\bigg\{\frac{\zeta''}{\zeta}(1+s+u) + \frac{\zeta'}{\zeta}(1+\alpha+s)\frac{\zeta'}{\zeta}(1+\beta+u) \\
& \quad \quad \quad \quad - \frac{\zeta'}{\zeta}(1+s+u)\frac{\zeta'}{\zeta}(1+\beta+u) - \frac{\zeta'}{\zeta}(1+s+u)\frac{\zeta'}{\zeta}(1+\alpha+s) \bigg\} \\
& \; + A_{\alpha,\beta}^{(1,0)}(0,0;s,u) \bigg\{\frac{\zeta'}{\zeta}(1+\beta+u) - \frac{\zeta'}{\zeta}(1+s+u) \bigg\} \\
& \; + A_{\alpha,\beta}^{(0,1)}(0,0;s,u) \bigg\{\frac{\zeta'}{\zeta}(1+\alpha+s) - \frac{\zeta'}{\zeta}(1+s+u) \bigg\} + A_{\alpha,\beta}^{(1,1)}(0,0;s,u) \bigg] N^{s+u} \frac{ds}{s^{i+1}}\frac{du}{u^{j+1}},
\end{align*}
by deforming the path of integration of the $s,u$-integrals to $\real(s)=\real(u)=\delta$ with $\delta>0$. Next use Dirichlet series to see that
\begin{align*}
J_{1} = \frac{1}{(2\pi i)^2} \int_{(\delta)} & \int_{(\delta)} \frac{\zeta(1+\alpha+\beta)}{\zeta(1+\alpha+s)\zeta(1+\beta+u)} \\
& \times \bigg[ A_{\alpha,\beta}(0,0;s,u)\sum_{n \le N} \bigg\{\frac{(\mathbf{1} \star \Lambda_2)(n)}{n^{1+s+u}} + \frac{1}{n^{1+s+u}}\frac{\zeta'}{\zeta}(1+\alpha+s)\frac{\zeta'}{\zeta}(1+\beta+u) \\
& \quad \quad \quad \quad + \frac{(\mathbf{1} \star \Lambda)(n)}{n^{1+s+u}} \frac{\zeta'}{\zeta}(1+\beta+u) + \frac{(\mathbf{1} \star \Lambda)(n)}{n^{1+s+u}}\frac{\zeta'}{\zeta}(1+\alpha+s) \bigg\} \\
& \; + \zeta(1+s+u) A_{\alpha,\beta}^{(1,0)}(0,0;s,u) \bigg\{\frac{\zeta'}{\zeta}(1+\beta+u) - \frac{\zeta'}{\zeta}(1+s+u) \bigg\} \\
& \; + \zeta(1+s+u) A_{\alpha,\beta}^{(0,1)}(0,0;s,u) \bigg\{\frac{\zeta'}{\zeta}(1+\alpha+s) - \frac{\zeta'}{\zeta}(1+s+u) \bigg\} \\ 
& \; + A_{\alpha,\beta}^{(1,1)}(0,0;s,u)\sum_{n \le N} \frac{1}{n^{1+s+u}}  \bigg] N^{s+u} \frac{ds}{s^{i+1}}\frac{du}{u^{j+1}}.
\end{align*}
We have used the Dirichlet convolution
\[
\zeta(1+s+u) \frac{\zeta''}{\zeta}(1+s+u) = \sum_{n=1}^\infty \frac{(\mathbf{1} \star \Lambda_2)(n)}{n^{1+s+u}}, \quad \real(s+u)>0, \quad \textnormal{etc}.
\]
Now we take $\delta \asymp L^{-1}$ and bound integrals trivially to get $J_{1} \ll L^{i+j+2}$. We then use a Taylor expansion so that $A_{\alpha,\beta}^{(m,n)}(0,0;s,u) = A_{\alpha,\beta}^{(m,n)}(0,0;\beta,\alpha) + O(|s-\beta|+|u-\alpha|)$ for $m,n \in \{0,1\}$. Each factor of $s$ and $u$ saves a factor of $\log T$ and $\alpha,\beta \ll 1/\log T$. We know that $ A_{\alpha,\beta}^{(0,0)}(0,0;\beta,\alpha) = 1$ and we use \eqref{logderivativeA} to find the other three values that we are missing in the expression for $J_{1}$. 

By logarithmic differentiation we have that
\begin{align*}
\frac{\partial}{\partial z_1} \log A_{\alpha,\beta}(z_1,w_1;s,u) = \frac{A_{\alpha,\beta}^{(1,0)}(z_1,w_1;s,u)}{A_{\alpha,\beta}(z_1,w_1;s,u)}.
\end{align*}
We concentrate on the left-hand side and observe from \eqref{logderivativeA} that
\begin{align*}
\frac{\partial}{\partial z_1} \log A_{\alpha,\beta}(z_1,w_1;s,u) \bigg|_{w_1=z_1=0} = \sum_p \bigg(&-\frac{\log p}{-1+p^{1+s+u}} \\
&+ \frac{p^{\alpha+s}(-1+p^{1+\alpha+\beta})\log p}{(-1+p^{1+\alpha+s})(p^{\alpha+\beta}-p^{\alpha+s}+p^u(-p^\beta+p^{1+\alpha+\beta+s}))}\bigg).
%\frac{\partial}{\partial z_1} \log A_{\alpha,\beta}(z_1,w_1;s,u) \bigg|_{w_1=z_1=0} & = \sum_p \bigg( \frac{\log p}{1-p^{1+s+u}} %- \frac{\log p}{1-p^{1+\alpha+s}} \\
%& \quad \quad + \frac{\log p}{p^{1+s+u}-p^{u-\beta}-p^{s-\alpha}+1} - \frac{\log p}{p^{1+\alpha+s}-p^{\alpha+s-\alpha-u}-p^{\beta-u}+1}\bigg).
\end{align*}
Consequently
\begin{align*}
\frac{\partial}{\partial z_1} \log A_{\alpha,\beta}(z_1,w_1;\beta,\alpha) \bigg|_{w_1=z_1=0} & = \sum_p \bigg(-\frac{\log p}{-1+p^{1+\beta+\alpha}} + \frac{\log p}{-1+p^{1+\alpha+\beta}}\bigg) = 0.
\end{align*}
Therefore
\begin{align*}
A_{\alpha,\beta}^{(1,0)}(0,0;\beta,\alpha) = A_{\alpha,\beta}(0,0;\beta,\alpha)\frac{\partial}{\partial z_1} \log A_{\alpha,\beta}(z_1,w_1;\beta,\alpha) \bigg|_{w_1=z_1=0} = 1 \times 0 = 0.
\end{align*}
By symmetry one also gets that $A_{\alpha,\beta}^{(0,1)}(0,0;\beta,\alpha) = 0$. We next move on to the derivative with respect to $z_1$ and with respect to $w_1$. Unfortunately, the trick with the logarithmic derivative will not work as cleanly as it has hitherto. To find $A_{\alpha,\beta}^{(1,1)}$ we make use of Fa\`{a} di Bruno's formula
\begin{align} \label{faadibruno}
\frac{\partial}{\partial z_1}\cdots\frac{\partial}{\partial z_{\ell_1}}\frac{\partial}{\partial z_{\ell_1+1}}\cdots\frac{\partial}{\partial z_{\ell_2}} \log A_{\alpha,\beta} (\mathbf{z};s,u) = \sum_{\pi \in \Pi(\ell_1+\ell_2)} \frac{(-1)^{\pi-1}(\pi-1)!}{A_{\alpha,\beta} (\mathbf{z};s,u)^\pi} \prod_{B \in \pi} \frac{\partial^{|B|}}{\prod_{k \in B}\partial z_k}A_{\alpha,\beta} (\mathbf{z};s,u),
\end{align} 
where we have found it useful to temporarily consolidate the notation from $\mathbf{z}=(z_1, \cdots, z_{\ell_1})$ and $\mathbf{w} = (w_1, \cdots, w_{\ell_2})$ to simply $\mathbf{z}=(z_1, \cdots, z_{\ell_1}, z_{\ell_1+1}, \cdots, z_{\ell_2})$. Here $\pi$ runs through the set $\Pi(\ell_1+\ell_2)$ of all partitions of $\{1,2,\cdots,\ell_1+\ell_2\}$ and $B \in \pi$ indicates that the variable $B$ runs through the list of all blocks of the partition $\pi$. For $\ell_1 = \ell_2 = 1$, the left-hand side of \eqref{faadibruno} is given by the expression
\begin{align*}
\frac{\partial^2}{\partial z_1 \partial w_1} \log A_{\alpha,\beta}(z_1,w_1;s,u) \bigg|_{w_1=z_1=0} & = \sum_p \bigg( -\frac{p^{1+s+u}\log^2 p}{(p^{1+s+u}-1)^2} + \frac{p^{\alpha+\alpha+s+u}(p^{1+\alpha+\beta}-1)\log^2 p}{(p^{\alpha+\beta}-p^{-\alpha+s}+p^{\beta+u}+p^{1+\alpha+\alpha+s+u})^2} \bigg).
\end{align*}
Consequently 
\begin{align*}
\frac{\partial^2}{\partial z_1 \partial w_1} \log A_{\alpha,\beta}(z_1,w_1;\beta,\alpha) \bigg|_{w_1=z_1=0} & = \sum_p \bigg( -\frac{p^{1+\alpha+\beta}\log^2 p}{(p^{1+\alpha+\beta}-1)^2} + \frac{\log^2 p}{p^{1+\alpha+\beta}-1} \bigg) = - \sum_p \frac{\log^2 p}{(p^{1+\alpha+\beta}-1)^2}. 
\end{align*}
For the right-hand side of \eqref{faadibruno} we get
\begin{align*}
& \frac{1}{A_{\alpha,\beta}(z_1,w_1;s,u)} \frac{\partial^2 A_{\alpha,\beta}(z_1,w_1;s,u)}{\partial z_1 \partial w_1}  - \frac{1}{A_{\alpha,\beta}(z_1,w_1;s,u)^2} \bigg(\frac{\partial A_{\alpha,\beta}(z_1,w_1;s,u)}{\partial z_1}\frac{\partial A_{\alpha,\beta}(z_1,w_1;s,u)}{\partial w_1}\bigg).
\end{align*}
Since $A_{\alpha,\beta}(0,0;\beta,\alpha)=1$ and $A_{\alpha,\beta}^{(1,0)}(0,0;\beta,\alpha) = A_{\alpha,\beta}^{(0,1)}(0,0;\beta,\alpha) = 0$, we finally see that at $w_1 = z_1 = 0$ and $s=\beta$ and $u=\alpha$ the above expression reduces to
\begin{align*}
& \frac{\partial^2 A_{\alpha,\beta}(z_1,w_1;\beta,\alpha)}{\partial z_1 \partial w_1}\bigg|_{z_1=w_1=0} = A_{\alpha,\beta}^{(1,1)}(0,0;\beta,\alpha).
\end{align*}
Hence comparing the left- and right-hand sides of \eqref{faadibruno} we obtain
\begin{align*}
A_{\alpha,\beta}^{(1,1)}(0,0;\beta,\alpha) = - \sum_p \bigg(\frac{\log p}{p^{1+\alpha+\beta}-1} \bigg)^2.
\end{align*}
Note that this is the term appearing in \cite[Theorem 2.5]{cs}. We remark that for $|\alpha + \beta| \le \varepsilon < \tfrac{1}{4}$ we get
\[
|A_{\alpha,\beta}^{(1,1)}(0,0;\beta,\alpha)| < \zeta(2) \quad \textnormal{and} \quad A_{0,0}^{(1,1)}(0,0;0,0) \approx 1.385603705.
\]

We can assemble our findings back in $J_{1}$ to obtain
\begin{align*}
J_{1} = \zeta(1+\alpha+\beta)\frac{1}{(2\pi i)^2} & \int_{(\delta)} \int_{(\delta)} \frac{1}{\zeta(1+\alpha+s)\zeta(1+\beta+u)} \\
& \times \bigg[ A_{\alpha,\beta}(0,0;\beta,\alpha)\sum_{n \le N} \bigg\{\frac{(\mathbf{1} \star \Lambda_2)(n)}{n^{1+s+u}} + \frac{1}{n^{1+s+u}}\frac{\zeta'}{\zeta}(1+\alpha+s)\frac{\zeta'}{\zeta}(1+\beta+u) \\
& \quad \quad \quad \quad + \frac{(\mathbf{1} \star \Lambda)(n)}{n^{1+s+u}} \frac{\zeta'}{\zeta}(1+\beta+u) + \frac{(\mathbf{1} \star \Lambda)(n)}{n^{1+s+u}}\frac{\zeta'}{\zeta}(1+\alpha+s) \bigg\} \\
& \quad \quad \quad \quad + A_{\alpha,\beta}^{(1,1)}(0,0;\beta,\alpha)\sum_{n \le N}\frac{1}{n^{1+s+u}}\bigg] N^{s+u} \frac{ds}{s^{i+1}}\frac{du}{u^{j+1}} \\
& =: J_{1,1}+J_{1,2}+J_{1,3}+J_{1,4}+J_{1,5}.
\end{align*}
We quickly explain how to get the main terms in rough strokes before moving on the general idea. For $J_{1,1}$ we may now write
\[
J_{1,1} = \frac{1}{\alpha+\beta}\sum_{n \le N} \frac{(\mathbf{1} \star \Lambda_2)(n)}{n} L_{1,1} L_{1,2},
\]
where
\[
L_{1,1} = \frac{1}{2 \pi i} \int_{\asymp (L^{-1})} \bigg(\frac{N}{n}\bigg)^s  \frac{1}{\zeta(1+\alpha+s)}\frac{ds}{s^{i+1}} \quad \textnormal{and} \quad L_{1,2} = \frac{1}{2 \pi i} \int_{\asymp (L^{-1})} \bigg(\frac{N}{n}\bigg)^u  \frac{1}{\zeta(1+\beta+u)}\frac{du}{u^{j+1}}.
\]
The critical thing to notice is that the variables $s$ and $u$ have been separated. These integrals are special cases of Lemma \ref{contourlemma} that will proved later in the wider narrative of the general case. This will require a careful use of the standard zero-free region of the Riemann zeta-function and the Laurent series around $s=0$
\[
\zeta(1+s) = \frac{1}{s}+C_0+O(s),
\]
where $C_0$ is the Euler constant. At any rate, these integrals are seen to be equal to \cite[p. 546]{youngshort}
\[
J_{1,1} = \frac{1}{\alpha+\beta}\sum_{n \le N} \frac{(\mathbf{1} \star \Lambda_2)(n)}{n} \frac{1}{(2\pi i)^2} \oint\oint \bigg(\frac{N}{n}\bigg)^s  (\alpha+s)
 \bigg(\frac{N}{n}\bigg)^u  (\beta+u)
\frac{ds}{s^{i+1}}  \frac{du}{u^{j+1}} + O(L^{i+j}),
\]
where the contours are small circles of radii one centered around the origin. Some re-arrangements lead to
\begin{align*}
J_{1,1} &= \frac{d^2}{dxdy}e^{\alpha x + \beta y}\frac{1}{\alpha+\beta}\sum_{n \le N} \frac{(\mathbf{1} \star \Lambda_2)(n)}{n} \frac{1}{(2\pi i)^2} \oint\oint \bigg(\frac{N}{n}e^x\bigg)^s 
 \bigg(\frac{N}{n}e^y\bigg)^u 
\frac{ds}{s^{i+1}}  \frac{du}{u^{j+1}} + O(L^{i+j}) \bigg|_{x=y=0} \\
&= \frac{1}{i!j!}\frac{d^2}{dxdy}e^{\alpha x + \beta y}\frac{1}{\alpha+\beta}\sum_{n \le N} \frac{(\mathbf{1} \star \Lambda_2)(n)}{n} \bigg(x + \log \frac{N}{n}\bigg)^i \bigg(y + \log \frac{N}{n}\bigg)^j + O(L^{i+j}) \bigg|_{x=y=0} \\
&= \frac{1}{\alpha+\beta}\frac{1}{i!j!}\frac{(\log N)^{i+j}}{\log^2 N}\frac{d^2}{dxdy}N^{\alpha x + \beta y}\sum_{n \le N} \frac{(\mathbf{1} \star \Lambda_2)(n)}{n}  \bigg( x + \frac{\log (N/n)}{\log N}\bigg)^i \bigg(y + \frac{\log(N/n)}{\log N}\bigg)^j \bigg|_{x=y=0} \\
&  \quad + O(L^{i+j}),
\end{align*}
by the change of variables $x \to x \log N$ and $y \to y \log N$. Going back to $I_1$ we can perform the sums over $i$ and $j$ so that
\begin{align*}
I_{1,1}(\alpha,\beta) &= \frac{T\widehat{\Phi}(0)}{\log^{4} N}\frac{1}{\alpha+\beta} \frac{d^2}{dxdy}N^{\alpha x + \beta y}\sum_{n \le N} \frac{(\mathbf{1} \star \Lambda_2)(n)}{n} \\
& \quad \times \sumtwo_{i,j} p_{\ell_1,i}p_{\ell_2,j} \bigg( x + \frac{\log (N/n)}{\log N}\bigg)^i \bigg(y + \frac{\log(N/n)}{\log N}\bigg)^j  \bigg|_{x=y=0} + O(T/L) \\
&= \frac{T\widehat{\Phi}(0)}{\log^{4} N}\frac{1}{\alpha+\beta} \frac{d^2}{dxdy}N^{\alpha x + \beta y}\sum_{n \le N} \frac{(\mathbf{1} \star \Lambda_2)(n)}{n} P_{1} \bigg( x + \frac{\log (N/n)}{\log N}\bigg) P_{2} \bigg(y + \frac{\log(N/n)}{\log N}\bigg) \bigg|_{x=y=0} \\
& \quad + O(T/L).
\end{align*}
Using Euler-Maclaurin summation (see \cite[Corollary 4.5]{bcy}, \cite[Lemma 2.7]{krz01} and \cite[Lemma 3.6]{rrz01}) the above can be turned into
\begin{align*}
I_{1,1}(\alpha,\beta) = \frac{T\widehat{\Phi}(0)}{\log N}\frac{1}{\alpha+\beta}\frac{d^2}{dxdy}N^{\alpha x + \beta y} \int_0^{1} (1-u)^2 P_{1} (x+u) P_{2} (y+u)du \bigg|_{x=y=0} + O(T/L).
\end{align*}

Next, we look at $J_{1,2}$. The procedure parallels the one above. We see that
\begin{align*}
J_{1,2} = \frac{1}{\alpha+\beta}\sum_{n \le N} \frac{1}{n} L_{2,1}L_{2,2},
\end{align*}
where
\begin{align*}
L_{2,1} = \frac{1}{2 \pi i} \int_{\asymp (L^{-1})} \bigg(\frac{N}{n}\bigg)^s \frac{1}{\zeta(1+\alpha+s)}\frac{\zeta'}{\zeta}(1+\alpha+s) \frac{ds}{s^{i+1}},
\end{align*} 
as well as
\begin{align*}
\quad L_{2,2} = \frac{1}{2 \pi i} \int_{\asymp (L^{-1})} \bigg(\frac{N}{n}\bigg)^u \frac{1}{\zeta(1+\beta+u)}\frac{\zeta'}{\zeta}(1+\beta+u) \frac{du}{u^{j+1}}.
\end{align*}
%Here the argument becomes more subtle, but it essentially necessitates the standard zero-free region of $\zeta$ as well as 
Now use the standard zero-free region and bounds of $\zeta$, as well as the Laurent series around $s=0$
\[
\frac{\zeta'}{\zeta}(1+s) = -\frac{1}{s} + C_0 + O(s),
\]
(more on this later in Lemma 7.1) so that
\begin{align*}
J_{1,2}(\alpha,\beta) &= \frac{1}{\alpha+\beta}\sum_{n \le N} \frac{1}{n} \frac{1}{(2 \pi i)^2} \oint \oint \bigg(\frac{N}{n}\bigg)^{s+u} \frac{ds}{s^{i+1}}\frac{du}{u^{j+1}} + O(L^{i+j+1}) \\
&= \frac{1}{\alpha+\beta}\sum_{n \le N} \frac{1}{n} \frac{1}{i!j!}\bigg(\log\frac{N}{n}\bigg)^{i+j} + O(L^{i+j+1}).
\end{align*}
Again we sum over $i$ and $j$ and obtain
\begin{align*}
I_{1,2}(\alpha,\beta) &= \frac{T\widehat{\Phi}(0)}{\log^2 N}\frac{1}{\alpha+\beta}\sum_{n \le N} \frac{1}{n} \sumtwo_{i,j}\frac{p_{\ell_1,i}p_{\ell_2,j}i!j!}{\log^{i+j} N}  \frac{1}{i!j!}\bigg(\log\frac{N}{n}\bigg)^{i+j} + O(T/L) \\
&= \frac{T\widehat{\Phi}(0)}{\log^2 N} \frac{1}{\alpha+\beta}\sum_{n \le N} \frac{1}{n} P_{1}\bigg(\frac{\log(N/n)}{\log N}\bigg)P_{2}\bigg(\frac{\log(N/n)}{\log N}\bigg) + O(T/L).
\end{align*}
Euler-Maclaurin summation turns this into
\begin{align*}
I_{1,2}(\alpha,\beta) = \frac{T\widehat{\Phi}(0)}{\log N}\frac{1}{\alpha+\beta}\int_0^{1} P_{1}(u)P_{2}(u)du + O(T/L).
\end{align*}

The cases $J_{1,3}$ and $J_{1,4}$ are identical, up to the obvious symmetries. We do $J_{1,3}$. The same strategy as before leads us to
\begin{align*}
J_{1,3} &= \frac{1}{\alpha+\beta}\sum_{n \le N} \frac{(\mathbf{1} \star \Lambda_1)(n)}{n} L_{1,1}L_{2,2} \\
&= - \frac{1}{\alpha+\beta}\sum_{n \le N} \frac{(\mathbf{1} \star \Lambda_1)(n)}{n} \frac{1}{i!}\frac{\log^i N}{\log N} \frac{d}{dx} N^{\alpha x} \bigg(x+\frac{\log(N/n)}{\log N} \bigg)^i \frac{1}{j!} \bigg(\log\frac{N}{n}\bigg)^j \bigg|_{x=0} + O(L^{i+j}).
\end{align*}
Summing over $i$ and $j$ yields
\begin{align*}
I_{1,3}(\alpha,\beta) %&=  -\frac{1}{\alpha+\beta}\sum_{n \le N} \frac{(\mathbf{1} \star \Lambda_1)(n)}{n} \sumtwo_{i,j}\frac{p_{\ell_1,i}p_{\ell_2,j}i!j!}{\log^{i+j}N} \frac{1}{i!}\frac{\log^i N}{\log N} \frac{d}{dx} N^{\alpha x} \bigg(x+\frac{\log(N/n)}{\log N} \bigg)^i \frac{1}{j!} \bigg(\log\frac{N}{n}\bigg)^j\bigg|_{x=0} + O(T/L) \\
&=  -\frac{T\widehat{\Phi}(0)}{\log^2 N}\frac{(\log N)^{-1}}{\alpha+\beta}\frac{d}{dx} N^{\alpha x} \sum_{n \le N} \frac{(\mathbf{1} \star \Lambda_1)(n)}{n} \sumtwo_{i,j} p_{\ell_1,i}p_{\ell_2,j}  \bigg(x+\frac{\log(N/n)}{\log N} \bigg)^i  \bigg(\frac{\log(N/n)}{\log N}\bigg)^j\bigg|_{x=0} \\
& \quad  + O(T/L) \\
& =  -\frac{T\widehat{\Phi}(0)}{\log^3 N}\frac{1}{\alpha+\beta}\frac{d}{dx} N^{\alpha x} \sum_{n \le N} \frac{(\mathbf{1} \star \Lambda_1)(n)}{n} P_{1}\bigg(x+\frac{\log(N/n)}{\log N} \bigg) P_{2}\bigg(\frac{\log(N/n)}{\log N}\bigg) \bigg|_{x=0} + O(T/L).
\end{align*}
Lastly, Euler-Maclaurin transforms this into
\begin{align*}
I_{1,3}(\alpha,\beta) =  -\frac{T\widehat{\Phi}(0)}{\log N}\frac{1}{\alpha+\beta}\frac{d}{dx} N^{\alpha x} \int_0^1  (1-u) P_{1}(x+u) P_{2}(u)du \bigg|_{x=0} + O(T/L).
\end{align*}
By symmetry
\begin{align*}
I_{1,4}(\alpha,\beta) =  -\frac{T\widehat{\Phi}(0)}{\log N}\frac{1}{\alpha+\beta}\frac{d}{dy} N^{\beta y} \int_0^1  (1-u) P_{1}(u)P_{2}(y+u)du \bigg|_{y=0} + O(T/L).
\end{align*}
Finally, the last term is very similar to the first one since
\begin{align*}
J_{1,5} &= - \frac{1}{\alpha+\beta}\sum_p \bigg(\frac{\log p}{p^{1+\alpha+\beta}-1}\bigg)^2 \sum_{n \le N} \frac{1}{n} L_{1,1}L_{1,2},
\end{align*}
so that the sum over $i$ and $j$ takes us to
\begin{align*}
I_{1,5} & = - \frac{T\widehat{\Phi}(0)}{\log^3 N}\sum_p \bigg(\frac{\log p}{p^{1+\alpha+\beta}-1}\bigg)^2 \frac{1}{\alpha+\beta}\frac{d^2}{dxdy}N^{\alpha x + \beta y} \int_0^{1} P_{1} (x+u) P_{2} (y+u)du \bigg|_{x=y=0} + O(T/L).
\end{align*}
We remark that with respect to $\log N$ this last term is of smaller order of magnitude smaller than the other terms $I_{1,i}$ for $i \in \{1,2,3,4\}$.

The final step is to assemble back $I_1(\alpha,\beta)$ with $I_{2}(\alpha,\beta)$, where $I_2$ was given by the second part of Theorem \ref{meanvalueintegral} or of equation \eqref{presumS}. This is an easy step that produces a lot of attrition. We start with $I_{1,1}$, $I_{1,2}$ and move on chronologically. For the very first term we see that
\begin{align*}
I_{1}(\alpha,\beta) &= I_{1,1}(\alpha,\beta)+I_{2,1}(\alpha,\beta) = I_{1,1}(\alpha,\beta)+T^{-\alpha-\beta}I_{1,1}(-\beta,-\alpha) +O(T/L) \\
& = \frac{\widehat{\Phi}(0)}{\log N} \frac{d^2}{dxdy} \frac{N^{\alpha x + \beta y}-T^{-\alpha-\beta}N^{-\beta x - \alpha y}}{\alpha+\beta} \int_0^{1} (1-u)^2 P_{1} (x+u) P_{2} (y+u)du \bigg|_{x=y=0} \\
& \quad + O(T/L).
\end{align*}
However, if we focus on the part involving $\alpha$ and $\beta$ only, then
\begin{align*}
\frac{N^{\alpha x + \beta y}-T^{-\alpha-\beta}N^{-\beta x - \alpha y}}{\alpha+\beta} &= N^{\alpha x + \beta y}\frac{1-N^{-\alpha x - \beta y}T^{-\alpha-\beta}N^{-\beta x - \alpha y}}{\alpha+\beta} \\
&= N^{\alpha x + \beta y}\frac{1-(N^{x + y}T)^{-\alpha-\beta}}{\alpha+\beta} \\
&= N^{\alpha x + \beta y} \log(N^{x+y}T) \int_0^1 (N^{x+y}T)^{-t(\alpha+\beta)}dt.
\end{align*}
Next, we bring in the differential operators $Q$ to get
\begin{align*}
& Q \bigg(\frac{-1}{\log T}\frac{\partial}{\partial \alpha}\bigg) Q \bigg(\frac{-1}{\log T}\frac{\partial}{\partial \beta}\bigg) \frac{N^{\alpha x + \beta y}-T^{-\alpha-\beta}N^{-\beta x - \alpha y}}{\alpha+\beta} \\
& =  \log(N^{x+y}T) Q \bigg(\frac{-1}{\log T}\frac{\partial}{\partial \alpha}\bigg)Q \bigg(\frac{-1}{\log T}\frac{\partial}{\partial \beta}\bigg) \int_0^1 (N^{t(x+y)-y}T^t)^{-\alpha} (N^{t(x+y)-x}T^t)^{-\beta}dt \\
& =  \log(N^{x+y}T) \int_0^1 Q\bigg(\frac{\log N^{t(x+y)-y}T^t}{\log T}\bigg)Q\bigg(\frac{\log N^{t(x+y)-x}T^t}{\log T}\bigg) (N^{t(x+y)-y}T^t)^{-\alpha} (N^{t(x+y)-x}T^t)^{-\beta} dt \\
& =  \log(T^{\theta(x+y)+1}) \int_0^1 Q(\theta t(x+y)-\theta y+t)Q(\theta t(x+y)-\theta x+t) (T^{\theta t(x+y)-\theta y+t})^{-\alpha} (T^{\theta t(x+y)-\theta x+t})^{-\beta} dt.
\end{align*}
At $\alpha = \beta = -R/L$ the above reduces to 
\begin{align*}
& Q \bigg(\frac{-1}{\log T}\frac{\partial}{\partial \alpha}\bigg) Q \bigg(\frac{-1}{\log T}\frac{\partial}{\partial \beta}\bigg) \frac{N^{\alpha x + \beta y}-T^{-\alpha-\beta}N^{-\beta x - \alpha y}}{\alpha+\beta} \bigg|_{\alpha=\beta=-R/L} \\
& = \log(T^{\theta(x+y)+1}) \int_0^1 Q(\theta t(x+y)-\theta y+t)Q(\theta t(x+y)-\theta x+t) e^{R[\theta t(x+y)-\theta y+t]} e^{R[\theta t(x+y)-\theta x+t]} dt.
\end{align*}
Finally, the main term coming from this subdivision of $I$ is given by
\begin{align*}
I_{1} &:= Q \bigg(\frac{-1}{\log T}\frac{\partial}{\partial \alpha}\bigg) Q \bigg(\frac{-1}{\log T}\frac{\partial}{\partial \beta}\bigg) I_{1}(\alpha,\beta) \bigg|_{\alpha=\beta=-R/L},
\end{align*}
and putting all these things back together produces
\begin{align*}
I_{1} = T\widehat{\Phi}(0) \frac{d^2}{dxdy} \frac{\theta(x+y)+1}{\theta}  
& \int_0^{1}  \int_0^1  (1-u)^2 P_{1} (x+u) P_{2} (y+u) e^{R[\theta t(x+y)-\theta y+t]} e^{R[\theta t(x+y)-\theta x+t]} \\
& \times   Q(\theta t(x+y)-\theta y+t)Q(\theta t(x+y)-\theta x+t)  \bigg|_{x=y=0}  du dt + O(T/L).
\end{align*}

Next, for $I_{2}$ we had
\begin{align*}
I_{2}(\alpha,\beta) &= I_{2,1}(\alpha,\beta)+I_{2,2}(\alpha,\beta) = I_{2,1}(\alpha,\beta)+T^{-\alpha-\beta}I_{2,1}(-\beta,-\alpha) +O(T/L) \\
& = T\frac{\widehat{\Phi}(0)}{\log N} \frac{1-T^{-\alpha-\beta}}{\alpha+\beta} \int_0^{1} P_{1}(u)P_{2}(u)du + O(T/L) + O(T/L) \\
%& = \frac{\widehat{\Phi}(0)}{\log N} \log T \int_0^1 T^{-t(\alpha+\beta)}dt \int_0^{1} P_{1}(u)P_{2}(u)du + O(T/L) \\
& = T\frac{\widehat{\Phi}(0)}{\theta} \int_0^1 \int_0^1 T^{-t(\alpha+\beta)} P_{1}(u)P_{2}(u) dt du + O(T/L).
\end{align*}
Moreover, 
\begin{align*}
Q \bigg(\frac{-1}{\log T}\frac{\partial}{\partial \alpha}\bigg) Q \bigg(\frac{-1}{\log T}\frac{\partial}{\partial \beta}\bigg) T^{-t\alpha-t\beta} \bigg|_{\alpha=\beta=-R/L} = Q(t)^2 e^{2Rt},
\end{align*}
and therefore
\begin{align*}
I_{2}= T\frac{\widehat{\Phi}(0)}{\theta} \int_0^1 \int_0^1 Q(t)^2 e^{2Rt} P_{1}(u)P_{2}(u) dt du + O(T/L).
\end{align*}

We move on $I_{3}$ following the same strategy as before. For this term
\begin{align*}
\frac{N^{\alpha x} - T^{-\alpha-\beta}N^{-\beta x}}{\alpha+\beta} = N^{\alpha x}\frac{1 - (TN^x)^{-\alpha-\beta}}{\alpha+\beta} = N^{\alpha x} \log(T N^x) \int_0^1 (TN^x)^{-t (\alpha+\beta)}dt.
\end{align*}
Also
\begin{align*}
 Q \bigg(\frac{-1}{\log T}\frac{\partial}{\partial \alpha}\bigg) & Q \bigg(\frac{-1}{\log T}\frac{\partial}{\partial \beta}\bigg)  (T^t N^{xt})^{-\alpha} (N^{-x} T^t N^{xt})^{-\beta}  \bigg|_{\alpha=\beta=-R/L} \\
&= Q(t+\theta x t)Q(-\theta x+t+\theta x t) e^{R[t+\theta xt]}e^{R[-\theta x + t + \theta x t]}.
\end{align*}
Therefore
\begin{align*}
I_{3} = -T\widehat{\Phi}(0) \frac{1+\theta x}{\theta} \frac{d}{dx} &\int_0^1\int_0^1 (1-u)P_1(x+u)P_2(u)e^{R[t+\theta xt]}e^{R[-\theta x + t + \theta x t]} \\
& \times Q(t+\theta x t)Q(-\theta x+t+\theta x t) dtdu \bigg|_{x=0} + O(T/L).
\end{align*}

By symmetry
\begin{align*}
I_{4} = -T\widehat{\Phi}(0) \frac{1+\theta y}{\theta} \frac{d}{dy} &\int_0^1\int_0^1 (1-u)P_1(u)P_2(y+u)e^{R[t+\theta yt]}e^{R[-\theta y + t + \theta y t]} \\
& \times Q(t+\theta y t)Q(-\theta y+t+\theta y t) dtdu \bigg|_{y=0} + O(T/L).
\end{align*}

Finally, we come to the last term that was of smaller order of magnitude in $\log N$. This term is similar to $I_{1}$ except for the presence of $(1-u)^2$ in the integrand and the power of $\log N$ in the denominator. The sum over primes is less than $\zeta(2)$ and
\[
\frac{d^2}{dxdy} N^{\alpha x + \beta y} \int_0^1 P_1(x+u)P_2(y+u)du  \ll 1  
\]
at $T \to \infty$, and since $N = T^{\theta}$ and $\alpha, \beta \ll L^{-1}$
\[
\frac{d^2}{dxdy} N^{\alpha x + \beta y}\bigg|_{x=y=0} = (\log N)(\alpha+\beta)N^{\alpha x+\beta y}\bigg|_{x=y=0} = (\log N)(\alpha +\beta) \ll 1.
\]
Thus for the integral in the term
\[
I_{5,1}(\alpha, \beta) = -\frac{T \widehat{\Phi}(0)}{\log^3 N} A_{\alpha, \beta}^{(1,1)}(0,0;\beta, \alpha) \frac{1}{\alpha+\beta} \frac{d^2}{dxdy} N^{\alpha x + \beta y} \int_0^1 P_1(x+u)P_2(y+u)du +O(T/L)
\]
we will get a term of size $\tfrac{T}{\log^3 N}\log T \ll \tfrac{T}{\log^2 T}$, and hence $I_{5,1}(\alpha,\beta) + T^{-\alpha-\beta}I_{5,1}(-\beta,-\alpha)$ is an error term of size $T/L^2 + T/L \ll T/L$.

Alternatively, to deal with $I_{5}$ we set $n=2$ in \cite[Entry 2.5]{ober} so that
\begin{align*}
\frac{1}{(a+x)^2} = -\frac{1}{2 \pi i} \int_{(c)} \pi a^{s-2} (s-1) \csc(\pi s)x^{-s} ds \quad \textnormal{with} \quad 0 < c < 2.
\end{align*}
Using $c=\frac{3}{2}$, $a=-1$ and $x=p^{1+\alpha+\beta}$ we can turn the denominator in the sum into an integral%.%yields
%\begin{align*}
%\frac{1}{(p^{1+\alpha+\beta}-1)^2} = -\frac{1}{2 \pi i} \int_{(c)} \pi (-1)^{s} (s-1) \csc(\pi s)(p^{1+\alpha+\beta})^{-s} ds.
%\end{align*}
%This implies that
\begin{align*}
- \sum_p \frac{\log^2 p}{(p^{1+\alpha+\beta}-1)^2} = \pi \sum_p (\log^2 p) \bigg(\frac{1}{2 \pi i} \int_{(\frac{3}{2})} (-1)^s (s-1) \csc(\pi s) p^{-s} (p^s)^{-\alpha}(p^s)^{-\beta} ds \bigg).
\end{align*}
%Recall that $I_5(\alpha,\beta)=I_{5,1}(\alpha,\beta)+I_{5,2}(\alpha,\beta)$. 
For this last part we have
\begin{align*}
I_{5,1}(\alpha,\beta) &+ I_{5,2}(\alpha,\beta) = I_{5,1}(\alpha,\beta)+T^{-\alpha-\beta} I_{5,1}(-\beta,-\alpha) + O(T/L) \\
& = \frac{\pi T \widehat{\Phi}(0)}{\log^3 N} \frac{d^2}{dxdy} \sum_p (\log^2 p) \bigg(\frac{1}{2 \pi i} \int_{(\frac{3}{2})}  \frac{N^{\alpha x + \beta y} (p^s)^{-\alpha}(p^s)^{-\beta}-T^{\alpha-\beta}N^{-\beta x - \alpha y} (p^s)^{\beta}(p^s)^{\alpha}}{\alpha+\beta} \\
& \quad \times (-1)^s (s-1) \csc(\pi s) p^{-s}ds \int_0^1 P_{1}(x+u)P_{2}(y+u)du\bigg) \bigg|_{x=y=0} + O(T/L).
\end{align*}
We examine the term containing $\alpha$ and $\beta$ and observe that
\begin{align*}
& \frac{N^{\alpha x + \beta y} (p^s)^{-\alpha}(p^s)^{-\beta}-T^{-\alpha-\beta}N^{-\beta x - \alpha y} (p^s)^{\beta}(p^s)^{\alpha}}{\alpha+\beta} \\
%& = N^{\alpha x + \beta y} (p^s)^{-\alpha}(p^s)^{-\beta}\frac{1- N^{-\alpha x - \beta y} (p^s)^{\alpha}(p^s)^{\beta}   T^{-\alpha-\beta}N^{-\beta x - \alpha y} (p^s)^{\beta}(p^s)^{\alpha}}{\alpha+\beta} \\
%& = N^{\alpha x + \beta y} (p^s)^{-\alpha-\beta}\frac{1- (N^{x +y} (p^{-2s})T)^{-\alpha-\beta}}{\alpha+\beta} \\
& = N^{\alpha x + \beta y} (p^s)^{-\alpha-\beta} \log(N^{x +y} (p^{-2s})T)\int_0^1 (N^{x +y} (p^{-2s})T)^{-t(\alpha+\beta)}dt.
\end{align*}
Bring in the differential operators $Q$ to get
\begin{align*}
& Q \bigg(\frac{-1}{\log T}\frac{\partial}{\partial \alpha}\bigg) Q \bigg(\frac{-1}{\log T}\frac{\partial}{\partial \beta}\bigg) \frac{N^{\alpha x + \beta y} (p^s)^{-\alpha}(p^s)^{-\beta}-T^{-\alpha-\beta}N^{-\beta x - \alpha y} (p^s)^{\beta}(p^s)^{\alpha}}{\alpha+\beta} \bigg|_{\alpha=\beta=-R/L} \\
& = \bigg(\theta(x +y)+1+\frac{-2s \log p}{\log T} \bigg) (\log T)  e^{R[-\theta y + \theta t(x+y) + t + \frac{(s-2st)\log p}{\log T}]} e^{R[-\theta x + \theta t(x+y) + t + \frac{(s-2st)\log p}{\log T}]} \\
& \quad \times Q\bigg(-\theta x+t\theta(x+y)+t+\frac{(s-2st) \log p}{\log T} \bigg)
Q\bigg(-\theta y + t\theta(x+y) + t + \frac{(s-2st)\log p}{\log T}\bigg).
\end{align*}
Thus we are left with
\begin{align*}
I_{5} &= \frac{\pi T \widehat{\Phi}(0)\log T}{\theta^3 \log^3 T} \frac{d^2}{dxdy} \sum_p (\log^2 p)  \frac{1}{2 \pi i} \int_{(\frac{3}{2})} \int_0^1 \int_0^1 \bigg(\theta(x +y)+1+\frac{-2s \log p}{\log T} \bigg)  \\
& \quad \times Q\bigg(-\theta x+t\theta(x+y)+t+\frac{(s-2st) \log p}{\log T} \bigg)
Q\bigg(-\theta y + t\theta(x+y) + t + \frac{(s-2st)\log p}{\log T}\bigg)\\
& \quad \times e^{R[-\theta y + \theta t(x+y) + t + \frac{(s-2st)\log p}{\log T}]} e^{R[-\theta x + \theta t(x+y) + t + \frac{(s-2st)\log p}{\log T}]} \\
& \quad \times (-1)^s (s-1) \csc(\pi s) p^{-s} P_{1}(x+u)P_{2}(y+u)dtduds \bigg|_{x=y=0} + O(T/L)\\
& \ll T/L.
\end{align*}
Hence the term associated to $A_{\alpha,\beta}^{(1,1)}(0,0;\beta,\alpha)$ is an error term. 

We also remark that
\begin{align*}
\frac{\zeta''}{\zeta}(s) - \bigg(\frac{\zeta'}{\zeta}(s)\bigg)^2 = \sum_p \frac{\log^2 p}{p^s} + O \bigg(\frac{\log^2 p}{p^{2-\varepsilon}}\bigg) 
\end{align*}
could prove useful in other scenarios to turn the sum into zeta functions.

\subsubsection{The case $\ell_1=\ell_2=2$}

Suppose we now move on to $\ell_1 = \ell_2 = 2$. We know that $A_{\alpha,\beta}(0,0,0,0;\beta,\alpha) = 1$ and a direct calculation from \eqref{logderivativeA} shows that
\begin{align*}
\frac{\partial}{\partial z_1} \log A_{\alpha,\beta}(z_1,z_2,w_1,w_2;\beta,\alpha)\bigg|_{z_1=z_2=w_1=w_2=0} &= 0, \\
\frac{\partial}{\partial z_2} \log A_{\alpha,\beta}(z_1,z_2,w_1,w_2;\beta,\alpha)\bigg|_{z_1=z_2=w_1=w_2=0} &= 0, \\
\frac{\partial}{\partial w_1} \log A_{\alpha,\beta}(z_1,z_2,w_1,w_2;\beta,\alpha)\bigg|_{z_1=z_2=w_1=w_2=0} &= 0, \\
\frac{\partial}{\partial w_2} \log A_{\alpha,\beta}(z_1,z_2,w_1,w_2;\beta,\alpha)\bigg|_{z_1=z_2=w_1=w_2=0} &= 0.
\end{align*}
Next, for the second derivatives
\begin{align*}
\frac{\partial^2}{\partial z_1 \partial z_2} \log A_{\alpha,\beta}(z_1,z_2,w_1,w_2;\beta,\alpha)\bigg|_{z_1=z_2=w_1=w_2=0} &= 0, \\
\frac{\partial^2}{\partial w_1 \partial w_2} \log A_{\alpha,\beta}(z_1,z_2,w_1,w_2;\beta,\alpha)\bigg|_{z_1=z_2=w_1=w_2=0} &= 0, \\
\frac{\partial^2}{\partial z_1 \partial w_1} \log A_{\alpha,\beta}(z_1,z_2,w_1,w_2;\beta,\alpha)\bigg|_{z_1=z_2=w_1=w_2=0} &= - \sum_p \bigg(\frac{\log p}{p^{1+\alpha+\beta}-1}\bigg)^2, \\
\frac{\partial^2}{\partial z_2 \partial w_2} \log A_{\alpha,\beta}(z_1,z_2,w_1,w_2;\beta,\alpha)\bigg|_{z_1=z_2=w_1=w_2=0} &= - \sum_p \bigg(\frac{\log p}{p^{1+\alpha+\beta}-1}\bigg)^2, \\
\frac{\partial^2}{\partial z_2 \partial w_1} \log A_{\alpha,\beta}(z_1,z_2,w_1,w_2;\beta,\alpha)\bigg|_{z_1=z_2=w_1=w_2=0} &= - \sum_p \bigg(\frac{\log p}{p^{1+\alpha+\beta}-1}\bigg)^2, \\
\frac{\partial^2}{\partial z_1 \partial w_2} \log A_{\alpha,\beta}(z_1,z_2,w_1,w_2;\beta,\alpha)\bigg|_{z_1=z_2=w_1=w_2=0} &= - \sum_p \bigg(\frac{\log p}{p^{1+\alpha+\beta}-1}\bigg)^2.
\end{align*}
For the triple and fourth derivatives
\begin{align*}
\frac{\partial^3}{\partial z_1 \partial z_2 \partial w_1} \log A_{\alpha,\beta}(z_1,z_2,w_1,w_2;\beta,\alpha)\bigg|_{z_1=z_2=w_1=w_2=0} &= 0, \\
\frac{\partial^3}{\partial z_1 \partial z_2 \partial w_2} \log A_{\alpha,\beta}(z_1,z_2,w_1,w_2;\beta,\alpha)\bigg|_{z_1=z_2=w_1=w_2=0} &= 0, \\
\frac{\partial^3}{\partial z_1 \partial w_1 \partial w_2} \log A_{\alpha,\beta}(z_1,z_2,w_1,w_2;\beta,\alpha)\bigg|_{z_1=z_2=w_1=w_2=0} &= 0, \\
\frac{\partial^3}{\partial z_2 \partial w_1 \partial w_2} \log A_{\alpha,\beta}(z_1,z_2,w_1,w_2;\beta,\alpha)\bigg|_{z_1=z_2=w_1=w_2=0} &= 0, \\
\frac{\partial^4}{\partial z_1 \partial z_2 \partial w_1 \partial w_2} \log A_{\alpha,\beta}(z_1,z_2,w_1,w_2;\beta,\alpha)\bigg|_{z_1=z_2=w_1=w_2=0} &= 0.
\end{align*}
The first four equations imply that
\begin{align*}
\frac{\partial}{\partial z_1} A_{\alpha,\beta}(z_1,z_2,w_1,w_2;\beta,\alpha)\bigg|_{z_1=z_2=w_1=w_2=0} &= 0, \\
\frac{\partial}{\partial z_2} A_{\alpha,\beta}(z_1,z_2,w_1,w_2;\beta,\alpha)\bigg|_{z_1=z_2=w_1=w_2=0} &= 0, \\
\frac{\partial}{\partial w_1} A_{\alpha,\beta}(z_1,z_2,w_1,w_2;\beta,\alpha)\bigg|_{z_1=z_2=w_1=w_2=0} &= 0, \\
\frac{\partial}{\partial w_2} A_{\alpha,\beta}(z_1,z_2,w_1,w_2;\beta,\alpha)\bigg|_{z_1=z_2=w_1=w_2=0} &= 0
\end{align*}
We now use \eqref{faadibruno} to see that
\begin{align*}
0 &= \frac{\partial^2}{\partial z_1 \partial z_2} \log A_{\alpha,\beta}(z_1, z_2, w_1, w_2; \beta, \alpha) \bigg|_{z_1=z_2=w_1=w_2=0} \\
& = - \frac{A_{\alpha,\beta}^{(1,0,0,0)}(0, 0, 0, 0; \beta, \alpha)A_{\alpha,\beta}^{(0,1,0,0)}(0, 0, 0, 0; \beta, \alpha)}{A_{\alpha,\beta}(0, 0, 0, 0; \beta, \alpha)^2} + \frac{A_{\alpha,\beta}^{(1,1,0,0)}(0, 0, 0, 0; \beta, \alpha)}{A_{\alpha,\beta}(0, 0, 0, 0; \beta, \alpha)} \\
& = A_{\alpha,\beta}^{(1,1,0,0)}(0, 0, 0, 0; \beta, \alpha).
\end{align*}
By symmetry
\begin{align*}
A_{\alpha,\beta}^{(0,0,1,1)}(0, 0, 0, 0; \beta, \alpha) = 0.
\end{align*}
Likewise
\begin{align*}
- \sum_p \bigg( \frac{\log p}{p^{1+\alpha+\beta}-1} \bigg)^2 &= \frac{\partial^2}{\partial z_1 \partial w_1} \log A_{\alpha,\beta}(z_1, z_2, w_1, w_2; \beta, \alpha) \bigg|_{z_1=z_2=w_1=w_2=0} \\
& = - \frac{A_{\alpha,\beta}^{(1,0,0,0)}(0, 0, 0, 0; \beta, \alpha)A_{\alpha,\beta}^{(0,0,1,0)}(0, 0, 0, 0; \beta, \alpha)}{A_{\alpha,\beta}(0, 0, 0, 0; \beta, \alpha)^2} + \frac{A_{\alpha,\beta}^{(1,0,1,0)}(0, 0, 0, 0; \beta, \alpha)}{A_{\alpha,\beta}(0, 0, 0, 0; \beta, \alpha)} \\
& = A_{\alpha,\beta}^{(1,0,1,0)}(0, 0, 0, 0; \beta, \alpha).
\end{align*}
By symmetry
\begin{align*}
A_{\alpha,\beta}^{(1,0,0,1)}(0, 0, 0, 0; \beta, \alpha) = A_{\alpha,\beta}^{(0,1,0,1)}(0, 0, 0, 0; \beta, \alpha) = A_{\alpha,\beta}^{(0,1,1,0)}(0, 0, 0, 0; \beta, \alpha) = - \sum_p \bigg( \frac{\log p}{p^{1+\alpha+\beta}-1} \bigg)^2.
\end{align*}
For the triple derivatives we get
\begin{align*}
0 & = \frac{\partial^3}{\partial z_1 \partial z_2 \partial w_1} \log A_{\alpha,\beta}(z_1, z_2, w_1, w_2; \beta, \alpha) \bigg|_{z_1=z_2=w_1=w_2} \\
& = \frac{2 A_{\alpha,\beta}^{(1,0,0,0)}(0, 0, 0, 0; \beta, \alpha) A_{\alpha,\beta}^{(0,1,0,0)}(0, 0, 0, 0; \beta, \alpha)A_{\alpha,\beta}^{(0,0,1,0)}(0, 0, 0, 0; \beta, \alpha)}{A_{\alpha,\beta}(0, 0, 0, 0; \beta, \alpha)^3} \\
& - \frac{A_{\alpha,\beta}^{(0,1,1,0)}(0, 0, 0, 0; \beta, \alpha) A_{\alpha,\beta}^{(1,0,0,0)}(0, 0, 0, 0; \beta, \alpha)}{A_{\alpha,\beta}(0, 0, 0, 0; \beta, \alpha)^2} \\
& - \frac{A_{\alpha,\beta}^{(0,1,0,0)}(0, 0, 0, 0; \beta, \alpha) A_{\alpha,\beta}^{(1,0,1,0)}(0, 0, 0, 0; \beta, \alpha)}{A_{\alpha,\beta}(0, 0, 0, 0; \beta, \alpha)^2} \\
& + \frac{A_{\alpha,\beta}^{(1,1,1,0)}(0, 0, 0, 0; \beta, \alpha)}{A_{\alpha,\beta}(0, 0, 0, 0; \beta, \alpha)} = A_{\alpha,\beta}^{(1,1,1,0)}(0, 0, 0, 0; \beta, \alpha).
\end{align*}
By symmetry
\begin{align*}
A_{\alpha,\beta}^{(1,1,0,1)}(0, 0, 0, 0; \beta, \alpha) = A_{\alpha,\beta}^{(1,0,1,1)}(0, 0, 0, 0; \beta, \alpha) = A_{\alpha,\beta}^{(0,1,1,1)}(0, 0, 0, 0; \beta, \alpha) = 0.
\end{align*}
Finally, (presenting only the surviving terms to save space)
\begin{align*}
0 & = \frac{\partial^4}{\partial z_1 \partial z_2 \partial w_1 \partial w_2} \log A_{\alpha,\beta}(z_1, z_2, w_1, w_2; \beta, \alpha) \bigg|_{z_1=z_2=w_1=w_2} \\
& = - \frac{A_{\alpha,\beta}^{(0,1,1,0)}(0,0,0,0;\beta,\alpha)A_{\alpha,\beta}^{(1,0,0,1)}(0,0,0,0;\beta,\alpha)}{A_{\alpha,\beta}(0,0,0,0;\beta,\alpha)^2}  - \frac{A_{\alpha,\beta}^{(0,1,0,1)}(0,0,0,0;\beta,\alpha)A_{\alpha,\beta}^{(1,0,1,0)}(0,0,0,0;\beta,\alpha)}{A_{\alpha,\beta}(0,0,0,0;\beta,\alpha)^2} \\
& + \frac{A_{\alpha,\beta}^{(1,1,1,1)}(0,0,0,0;\beta,\alpha)}{A_{\alpha,\beta}(0,0,0,0;\beta,\alpha)} = - 2\bigg(\sum_p \bigg(\frac{\log p}{p^{1+\alpha+\beta}-1}\bigg)^2\bigg)^2 + A_{\alpha, \beta}^{(1,1,1,1)}(0,0,0,0;\beta,\alpha).
\end{align*}
In other words
\begin{align*}
A_{\alpha,\beta}^{(1,1,1,1)}(0,0,0,0;\beta,\alpha) = 2\bigg(\sum_p \bigg(\frac{\log p}{p^{1+\alpha+\beta}-1}\bigg)^2\bigg)^2.
\end{align*}

Computing the contour integrals directly leads us to
\begin{align*}
\frac{1}{(2\pi i)^4} \oint \oint & \oint \oint \widetilde{\mathfrak{M}}_{\alpha,\beta}(z_1, z_2, w_1, w_2; s,u) \frac{dz_1}{z_1^2}\frac{dz_2}{z_2^2}\frac{dw_1}{w_1^2}\frac{dw_2}{w_2^2} = \frac{\zeta(1+s+u)\zeta(1+\alpha+\beta)}{\zeta(1+\alpha+s)\zeta(1+\beta+u)} \\
& \times \bigg[ 2 \bigg(\frac{\zeta''}{\zeta}(1+s+u)\bigg)^2 - \bigg(\frac{\zeta'}{\zeta}(1+s+u)\bigg)^4 + 2 \bigg(\frac{\zeta'}{\zeta}(1+s+u)\bigg)^3 \frac{\zeta'}{\zeta}(1+\alpha+s) \\
& \quad + 2 \bigg(\frac{\zeta'}{\zeta}(1+s+u)\bigg)^3 \frac{\zeta'}{\zeta}(1+\beta+u) + \bigg(\frac{\zeta'}{\zeta}(1+s+u)\bigg)^2 \bigg(\frac{\zeta'}{\zeta}(1+\alpha+s)\bigg)^2 \\
& \quad  + \bigg(\frac{\zeta'}{\zeta}(1+s+u)\bigg)^2 \bigg(\frac{\zeta'}{\zeta}(1+\beta+u)\bigg)^2 - 2 \bigg(\frac{\zeta'}{\zeta}(1+\beta+u)\bigg)^2 \frac{\zeta'}{\zeta}(1+\alpha+s) \frac{\zeta'}{\zeta}(1+s+u) \\
& \quad  - 2 \bigg(\frac{\zeta'}{\zeta}(1+\alpha+s)\bigg)^2 \frac{\zeta'}{\zeta}(1+\beta+u) \frac{\zeta'}{\zeta}(1+s+u) + \bigg(\frac{\zeta'}{\zeta}(1+\alpha+s)\bigg)^2 \bigg(\frac{\zeta'}{\zeta}(1+\beta+u)\bigg)^2 \\
& \quad  + 4 \frac{\zeta''}{\zeta}(1+s+u)\frac{\zeta'}{\zeta}(1+\alpha+s)\frac{\zeta'}{\zeta}(1+\beta+u) - 4 \frac{\zeta''}{\zeta}(1+s+u) \frac{\zeta'}{\zeta}(1+s+u)\frac{\zeta'}{\zeta}(1+\beta+u) \\
& \quad   - 4 \frac{\zeta''}{\zeta}(1+s+u) \frac{\zeta'}{\zeta}(1+s+u)\frac{\zeta'}{\zeta}(1+\alpha+s) \bigg].
\end{align*}
Here $\widetilde{\mathfrak{M}}_{\alpha,\beta}(z_1, z_2, w_1, w_2; s,u)$ is the same as $\mathfrak{M}_{\alpha,\beta}(z_1, z_2, w_1, w_2; s,u)$ but with $A_{\alpha,\beta}(z_1, z_2, w_1, w_2; s,u)$ replaced by $1$. The reason behind this change is that most of the terms of 
\begin{align*}
\frac{1}{(2\pi i)^4} \oint \oint \oint \oint \mathfrak{M}_{\alpha,\beta}(z_1, z_2, w_1, w_2; s,u) \frac{dz_1}{z_1^2}\frac{dz_2}{z_2^2}\frac{dw_1}{w_1^2}\frac{dw_2}{w_2^2}
\end{align*}
involve partial derivatives of $A$ that are zero and the ones that are not zero will eventually become error terms of size $O(T/L)$. Thus, for practical purposes we only really need to retain the terms that are multiplied by $A_{\alpha,\beta}(0,0,0,0;s,u)$. It is now clear that the process to achieve the main terms from here is the same as in the case $\ell_1 = \ell_2 = 1$, only that there are lot more of them.

\section{The general case $d \ge 0$}

We now specialize the coefficients to 
\begin{align} \label{specialgeneral1}
\mathfrak{a}_n := \frac{(\mu \star \Lambda_1^{\star\ell_1} \star \Lambda_2^{\star \ell_2} \star \cdots \star \Lambda_d^{\star \ell_d})(n)}{(\log N)^{\sum_{r=1}^d r \ell_r}},
\end{align}
as well as 
\begin{align} \label{specialgeneral2}
\mathfrak{b}_n := \frac{(\mu \star \Lambda_1^{*\barell_1} \star \Lambda_2^{\star \barell_2} \star \cdots \star \Lambda_d^{\star \barell_d})(n)}{(\log N)^{\sum_{\barr=1}^d \barr \barell_\barr}}.
\end{align}
The task ahead is to compute the arithmetical sum $\mathcal{S}_d = \sum_{hm=kn}$ appearing in \eqref{presumS}. This is precisely where the autocorrelation of ratios of $\zeta$ technique plays again a fundamental role and where we appreciate why it is unnecessary to discriminate square-free numbers $n$ as discussed in $\mathsection$3. To this end, define
\begin{align*}
\mathcal{S}_d :&= \sumtwo_{1 \le d,e \le \infty} \frac{\mathfrak{a}_d\overline{\mathfrak{b}_e}}{[d,e]} \frac{(d,e)^{\alpha+\beta}}{d^{\alpha+s}e^{\beta+u}} \\
&= \sum_{d,e} \frac{(\mu^{\star L_d + 1} \star (\log^{\ell_1}) \star (\log^2)^{\ell_2} \star \cdots \star (\log^d)^{\ell_d})(d)(\mu^{\star \barL_d + 1} \star (\log^{\barell_1}) \star (\log^2)^{\barell_2} \star \cdots \star (\log^d)^{\barell_d})(e)(d,e)^{\alpha+\beta}}{[d,e]d^{\alpha+s}e^{\beta+u} (\log N)^{\sum_{r=1}^d r \ell_r} (\log N)^{\sum_{\barr=1}^d \barr \barell_\barr}},
\end{align*}
by writing the generalized von Mangoldt function as $\Lambda_q = \mu \star \log^q$ and setting $L_d := \ell_1 + \ell_2 + \cdots + \ell_d$.

We have the rather unfortunate situation where it becomes difficult to recycle the alphabet but it should be clear that only the $d$'s in $(d,e), [d,e]$ and $d^{\alpha+s}$ are being summed. The $d$'s involved in the logs are a parameter of our choice.

We now write 
\[
d = d_0 (\prod_{1 \le i \le \ell_1} d_{1,i}) (\prod_{1 \le i \le \ell_2} d_{2,i}) \cdots (\prod_{1 \le i \le \ell_d} d_{d,i}),
\] 
and 
\[
e = e_0 (\prod_{1 \le j \le \barell_1} e_{1,j}) (\prod_{1 \le j \le \barell_2} e_{2,j}) \cdots (\prod_{1 \le j \le \barell_d} e_{d,j}).
\]
so that $\mathcal{S}_d$ becomes
\begin{align*}
\mathcal{S}_d &= \sumdots_{\substack{d_0, d_{1,1}, \cdots, d_{1,\ell_1}, d_{2,1}, \cdots, d_{2,\ell_1} \cdots d_{d,1} \cdots d_{d,\ell_1} \\ e_0, e_{1,1}, \cdots, e_{1,\ell_2}, e_{2,1}, \cdots, e_{2,\ell_2} \cdots e_{d,1} \cdots e_{d,\ell_2}}} \mu^{\star L_d+1}(d_0)\mu^{\star \barL_d+1}(e_0) \\
& \times \frac{(d_0 (\prod_{1 \le i \le \ell_1} d_{1,i}) (\prod_{1 \le i \le \ell_2} d_{2,i}) \cdots (\prod_{1 \le i \le \ell_d} d_{d,i}),e_0 (\prod_{1 \le j \le \barell_1} e_{1,j}) (\prod_{1 \le j \le \barell_2} e_{2,j}) \cdots (\prod_{1 \le j \le \barell_d} e_{d,j}))^{\alpha+\beta}}{[d_0 (\prod_{1 \le i \le \ell_1} d_{1,i}) (\prod_{1 \le i \le \ell_2} d_{2,i}) \cdots (\prod_{1 \le i \le \ell_d} d_{d,i}),e_0 (\prod_{1 \le j \le \barell_1} e_{1,j}) (\prod_{1 \le j \le \barell_2} e_{2,j}) \cdots (\prod_{1 \le j \le \barell_d} e_{d,j})]} \\
& \times \frac{(\prod_{1 \le i \le \ell_1} \log d_{1,i}) (\prod_{1 \le i \le \ell_2} \log d_{2,i}) \cdots (\prod_{1 \le i \le \ell_d} \log d_{d,i})}{(d_0 (\prod_{1 \le i \le \ell_1} d_{1,i}) (\prod_{1 \le i \le \ell_2} d_{2,i}) \cdots (\prod_{1 \le i \le \ell_d} d_{d,i}))^{\alpha+s} (\log N)^{\sum_{r=1}^d r \ell_r}} \\
& \times \frac{(\prod_{1 \le j \le \barell_1} \log e_{1,j}) (\prod_{1 \le j \le \barell_2} \log e_{2,j}) \cdots (\prod_{1 \le j \le \barell_d} \log e_{d,j})}{(e_0 (\prod_{1 \le j \le \barell_1} e_{1,i}) (\prod_{1 \le j \le \barell_2} e_{2,j}) \cdots (\prod_{1 \le j \le \barell_d} e_{d,j}))^{\beta+u} (\log N)^{\sum_{\barr=1}^d \barr \barell_\barr}}.
\end{align*}
To remove the various powers of $\log$, we now employ Cauchy's integral formula
\begin{align} \label{cauchydgeneral}
\log^q x = (-1)^q \frac{\partial^q}{\partial \gamma^q} \frac{1}{x^\gamma} \bigg|_{\gamma = 0} = (-1)^q  \frac{q!}{2 \pi i} \oint \frac{1}{x^q} \frac{dz}{z^{q+1}}, \quad q = 1,2,\cdots, 
\end{align}
where the contour of integration is a small circle around the origin. This further transforms $\mathcal{S}_d$ into
\begin{align*}
\mathcal{S}_d &= \frac{1}{(2 \pi i)^{\ell_1}}\ointdots \frac{1}{(2 \pi i)^{\ell_2}}\ointdots \cdots \frac{1}{(2 \pi i)^{\ell_d}}\ointdots (-1)^{1 \times \ell_1}(-1)^{2 \times \ell_2}\cdots(-1)^{d \times \ell_d} \\
& \times \frac{1}{(2 \pi i)^{\barell_1}}\ointdots \frac{1}{(2 \pi i)^{\barell_2}}\ointdots \cdots \frac{1}{(2 \pi i)^{\barell_d}}\ointdots (-1)^{1 \times \barell_1}(-1)^{2 \times \barell_2}\cdots(-1)^{d \times \barell_d} \\
& \times \sumdots_{\substack{d_0, d_{1,1}, \cdots, d_{1,\ell_1}, d_{2,1}, \cdots, d_{2,\ell_1} \cdots d_{d,1} \cdots d_{d,\ell_1} \\ e_0, e_{1,1}, \cdots, e_{1,\ell_2}, e_{2,1}, \cdots, e_{2,\ell_2} \cdots e_{d,1} \cdots e_{d,\ell_2}}} \mu^{\star L_d+1}(d_0)\mu^{\star \barL_d+1}(e_0) \\
& \times \frac{(d_0 (\prod_{1 \le i \le \ell_1} d_{1,i}) (\prod_{1 \le i \le \ell_2} d_{2,i}) \cdots (\prod_{1 \le i \le \ell_d} d_{d,i}),e_0 (\prod_{1 \le j \le \barell_1} e_{1,j}) (\prod_{1 \le j \le \barell_2} e_{2,j}) \cdots (\prod_{1 \le j \le \barell_d} e_{d,j}))^{\alpha+\beta}}{[d_0 (\prod_{1 \le i \le \ell_1} d_{1,i}) (\prod_{1 \le i \le \ell_2} d_{2,i}) \cdots (\prod_{1 \le i \le \ell_d} d_{d,i}),e_0 (\prod_{1 \le j \le \barell_1} e_{1,j}) (\prod_{1 \le j \le \barell_2} e_{2,j}) \cdots (\prod_{1 \le j \le \barell_d} e_{d,j})]} \\
& \times \frac{1}{d_0^{\alpha+s} (\prod_{1 \le i \le \ell_1} d_{1,i}^{\alpha+s+z_{1,i}}) (\prod_{1 \le i \le \ell_2} d_{2,i}^{\alpha+s+z_{2,i}}) \cdots (\prod_{1 \le i \le \ell_d} d_{d,i}^{\alpha+s+z_{d,i}}) (\log N)^{\sum_{r=1}^d r \ell_r}} \\
& \times \frac{1}{e_0^{\beta+u} (\prod_{1 \le j \le \barell_1} e_{1,i}^{\beta+u+w_{1,j}}) (\prod_{1 \le j \le \barell_2} e_{2,j}^{\beta+u+w_{2,j}}) \cdots (\prod_{1 \le j \le \barell_d} e_{d,j}^{\beta+u+w_{d,j}}) (\log N)^{\sum_{\barr=1}^d \barr \barell_\barr}} \\
& \times \frac{dz_{1,1}}{z_{1,1}^{1+1}} \cdots \frac{dz_{1,\ell_1}}{z_{1,\ell_1}^{1+1}} \frac{dz_{2,1}}{z_{2,1}^{2+1}} \cdots \frac{dz_{2,\ell_2}}{z_{2,\ell_2}^{2+1}} \cdots \frac{dz_{d,1}}{z_{d,1}^{d+1}} \cdots \frac{dz_{d,\ell_d}}{z_{d,\ell_d}^{d+1}}
\frac{dw_{1,1}}{w_{1,1}^{1+1}} \cdots \frac{dw_{1,\barell_1}}{w_{1,\barell_1}^{1+1}} \frac{dw_{2,1}}{w_{2,1}^{2+1}} \cdots \frac{dw_{2,\barell_2}}{w_{2,\barell_2}^{2+1}} \cdots \frac{dw_{d,1}}{w_{d,1}^{d+1}} \cdots \frac{dz_{w,\barell_d}}{z_{w,\barell_d}^{d+1}}.
\end{align*}
The multiplicativity of the above expression is now conducive for a representation as an Euler product format. One has that
\begin{align*}
\mathcal{S}_d &= \frac{1}{(2 \pi i)^{\ell_1}}\ointdots \frac{1}{(2 \pi i)^{\ell_2}}\ointdots \cdots \frac{1}{(2 \pi i)^{\ell_d}}\ointdots (-1)^{1 \times \ell_1}(-1)^{2 \times \ell_2}\cdots(-1)^{d \times \ell_d} \\
& \times \frac{1}{(2 \pi i)^{\barell_1}}\ointdots \frac{1}{(2 \pi i)^{\barell_2}}\ointdots \cdots \frac{1}{(2 \pi i)^{\barell_d}}\ointdots (-1)^{1 \times \barell_1}(-1)^{2 \times \barell_2}\cdots(-1)^{d \times \barell_d} \\
& \times \prod_p \sumdots_{\substack{p^{d_0}, p^{d_{1,1}}, \cdots, p^{d_{1,\ell_1}}, p^{d_{2,1}}, \cdots, p^{d_{2,\ell_1}} \cdots p^{d_{d,1}} \cdots p^{d_{d,\ell_1}} \\ p^{e_0}, p^{e_{1,1}}, \cdots, p^{e_{1,\ell_2}}, p^{e_{2,1}}, \cdots, p^{e_{2,\ell_2}}, \cdots, p^{e_{d,1}}, \cdots, p^{e_{d,\ell_2}}}} \mu^{\star L_d+1}(p^{d_0})\mu^{\star \barL_d+1}(p^{e_0}) \\
& \times \frac{\gcd(p^{d_0} (\prod_{1 \le i \le \ell_1} p^{d_{1,i}}) (\prod_{1 \le i \le \ell_2} p^{d_{2,i}}) \cdots (\prod_{1 \le i \le \ell_d} p^{d_{d,i}}),}{\operatorname{lcm}[p^{d_0} (\prod_{1 \le i \le \ell_1} p^{d_{1,i}}) (\prod_{1 \le i \le \ell_2} p^{d_{2,i}}) \cdots (\prod_{1 \le i \le \ell_d} p^{d_{d,i}}), } \\
& \quad \quad \quad \quad \frac{p^{e_0} (\prod_{1 \le j \le \barell_1} p^{e_{1,j}}) (\prod_{1 \le j \le \barell_2} p^{e_{2,j}}) \times \cdots \times (\prod_{1 \le j \le \barell_d} p^{e_{d,j}}))^{\alpha+\beta}}{p^{e_0} (\prod_{1 \le j \le \barell_1} p^{e_{1,j}}) (\prod_{1 \le j \le \barell_2} p^{e_{2,j}}) \times \cdots \times (\prod_{1 \le j \le \barell_d} p^{e_{d,j}})]}\\
& \times \frac{1}{(p^{d_0})^{\alpha+s} (\prod_{1 \le i \le \ell_1} (p^{d_{1,i}})^{\alpha+s+z_{1,i}}) (\prod_{1 \le i \le \ell_2} (p^{d_{2,i}})^{\alpha+s+z_{2,i}}) \cdots (\prod_{1 \le i \le \ell_d} (p^{d_{d,i}})^{\alpha+s+z_{d,i}}) (\log N)^{\sum_{r=1}^d r \ell_r}} \\
& \times \frac{1}{(p^{e_0})^{\beta+u} (\prod_{1 \le j \le \barell_1} (p^{e_{1,i}})^{\beta+u+w_{1,j}}) (\prod_{1 \le j \le \barell_2} (p^{e_{2,j}})^{\beta+u+w_{2,j}}) \cdots (\prod_{1 \le j \le \barell_d} (p^{e_{d,j}})^{\beta+u+w_{d,j}}) (\log N)^{\sum_{\barr=1}^d \barr \barell_\barr}} \\
& \times \frac{dz_{1,1}}{z_{1,1}^{1+1}} \cdots \frac{dz_{1,\ell_1}}{z_{1,\ell_1}^{1+1}} \frac{dz_{2,1}}{z_{2,1}^{2+1}} \cdots \frac{dz_{2,\ell_2}}{z_{2,\ell_2}^{2+1}} \cdots \frac{dz_{d,1}}{z_{d,1}^{d+1}} \cdots \frac{dz_{d,\ell_d}}{z_{d,\ell_d}^{d+1}} 
\frac{dw_{1,1}}{w_{1,1}^{1+1}} \cdots \frac{dw_{1,\barell_1}}{w_{1,\barell_1}^{1+1}} \frac{dw_{2,1}}{w_{2,1}^{2+1}} \cdots \frac{dw_{2,\barell_2}}{w_{2,\barell_2}^{2+1}} \cdots \frac{dw_{d,1}}{w_{d,1}^{d+1}} \cdots \frac{dz_{w,\barell_d}}{z_{w,\barell_d}^{d+1}}.
\end{align*}
To recover the main terms, we find the constant ($p^0$) and linear ($p^1$) terms of the above Euler product. To do so, we must constrain the choices of the $d$'s and the $e$'s to nine possibilities, much like in the $d=1$ case. While the computation is not complicated, it is so cumbersome that we provide it below for the benefit of the reader. The enumeration will be as follows. Label the choices by $\{d_0, d_{q,i}, e_0, e_{\barq,j} \}$ for some $q,\barq \in \{1,2,\cdots,d\}$, some $i \in \{1,2,\cdots,\ell_1\}$ and some $j \in \{1,2,\cdots,\ell_2\}$.
\begin{enumerate}
\item[(1)] For $\{0,0,0,0\}$ we naturally get $1$ in the Euler product. This will be the constant term.
\item[(2)] The case $\{1,0,0,0\}$ yields 
\begin{align*}
\frac{\mu^{\star L_d+1}(p)}{[p,1]} \frac{(p,1)^{\alpha+\beta}}{p^{\alpha+s}} = - \frac{L_d+1}{p^{1+\alpha+s}}.
\end{align*}
\item[(3)] By symmetry the case $\{0,0,1,0\}$ yields
\begin{align*}
 - \frac{\barL_d+1}{p^{1+\beta+u}}.
\end{align*}
\item[(4)] From the case $\{1,0,1,0\}$ we obtain
\begin{align*}
\frac{\mu^{\star L_d+1}(p)\mu^{\star \barL_d+1}(p)}{[p,p]} \frac{(p,p)^{\alpha+\beta}}{p^{\alpha+s}p^{\beta+u}} = \frac{(L_d+1)(\barL_d+1)}{p^{1+s+u}}.
\end{align*}
\item[(5)] The case $\{1,0,0,1\}$ is 
\begin{align*}
\frac{\mu^{\star L_d+1}(p)}{[p,p]} \frac{(p,p)^{\alpha+\beta}}{p^{\alpha+s}p^{\beta+u+w_{\barq,j}}} = -\frac{L_d+1}{p^{1+s+u+w_{\barq,j}}}.
\end{align*}
\item[(6)] The symmetric case to (5), which is $\{0,1,1,0\}$, is given by
\begin{align*}
- \frac{\barL_d+1}{p^{1+s+u+z_{q,i}}}.
\end{align*}
\item[(7)] The case $\{0,1,0,0\}$ yields
\begin{align*}
\frac{1}{[p,1]} \frac{(p,1)^{\alpha+\beta}}{p^{\alpha+s+z_{q,i}}} = \frac{1}{p^{1+\alpha+s+z_{q,i}}}.
\end{align*}
\item[(8)] Immediately, we see that for $\{0,0,0,1\}$ we get
\begin{align*}
\frac{1}{p^{1+\beta+u+w_{\barq,j}}}.
\end{align*}
\item[(9)] The last case is the most unpleasant one because it mixes $z$'s and $w$'s. For $\{0,1,0,1\}$ we arrive at
\begin{align*}
\frac{1}{[p,p]} \frac{(p,p)^{\alpha+\beta}}{p^{\beta+u+w_{\barq,j}} p^{\alpha+s+z_{q,i}}} = \frac{1}{p^{1+s+u+z_{q,i}+w_{\barq,j}}}.
\end{align*}
\end{enumerate}
We now insert these terms back into $\mathcal{S}_d$ and write
\begin{align*}
\mathcal{S}_d &= \frac{1}{(2 \pi i)^{\ell_1}}\ointdots \frac{1}{(2 \pi i)^{\ell_2}}\ointdots \cdots \frac{1}{(2 \pi i)^{\ell_d}}\ointdots (-1)^{1 \times \ell_1}(-1)^{2 \times \ell_2}\cdots(-1)^{d \times \ell_d} \\
& \quad \times \frac{1}{(2 \pi i)^{\barell_1}}\ointdots \frac{1}{(2 \pi i)^{\barell_2}}\ointdots \cdots \frac{1}{(2 \pi i)^{\barell_d}}\ointdots (-1)^{1 \times \barell_1}(-1)^{2 \times \barell_2}\cdots(-1)^{d \times \barell_d} \\
& \quad \times \prod_p \bigg(1 - \frac{L_d+1}{p^{1+\alpha+s}} - \frac{\barL_d+1}{p^{1+\beta+u}} + \frac{(L_d+1)(\barL_d+1)}{p^{1+s+u}} - \sum_{\barq=1}^d \sum_{j=1}^{\barell_\barq} \frac{L_d+1}{p^{1+s+u+w_{\barq,j}}} - \sum_{q=1}^d \sum_{i=1}^{\ell_q} \frac{\barL_d+1}{p^{1+s+u+z_{q,i}}} \\
& + \sum_{q=1}^d \sum_{i=1}^{\ell_q} \frac{1}{p^{1+\alpha+s+z_{q,i}}} + \sum_{\barq=1}^d \sum_{j=1}^{\barell_\barq} \frac{1}{p^{1+\beta+u+w_{\barq,j}}}+ \sum_{q=1}^d \sum_{i=1}^{\ell_q} \sum_{\barq=1}^d \sum_{j=1}^{\barell_\barq} \frac{1}{p^{1+s+u+z_{q,i}+w_{\barq,j}}} \bigg) \frac{A_{\alpha,\beta}(\{z_{q,i}\}, \{w_{\barq,j}\}; s, u)}{(\log N)^{\sum r \ell_r} (\log N)^{\sum \barr \barell_\barr}}\\
& \quad \times \frac{dz_{1,1}}{z^{1+1}} \cdots \frac{dz_{1,\ell_1}}{z^{1+1}} \frac{dz_{2,1}}{z^{2+1}} \cdots \frac{dz_{2,\ell_2}}{z^{2+1}} \cdots \frac{dz_{d,1}}{z^{d+1}} \cdots \frac{dz_{d,\ell_d}}{z^{d+1}} \frac{dw_{1,1}}{w^{1+1}} \cdots \frac{dw_{1,\barell_1}}{w^{1+1}} \frac{dw_{2,1}}{w^{2+1}} \cdots \frac{dw_{2,\barell_2}}{w^{2+1}} \cdots \frac{dw_{d,1}}{w^{d+1}} \cdots \frac{dz_{w,\barell_d}}{w^{d+1}}.
\end{align*}
The last step is to write this in as an autocorrelation ratio of zeta functions. This naturally leads us to
\begin{align} \label{generaltermsauto}
\mathcal{S}_d &= \frac{1}{(2 \pi i)^{\ell_1}}\ointdots \frac{1}{(2 \pi i)^{\ell_2}}\ointdots \cdots \frac{1}{(2 \pi i)^{\ell_d}}\ointdots (-1)^{1 \times \ell_1}(-1)^{2 \times \ell_2}\cdots(-1)^{d \times \ell_d} \nonumber \\
& \quad  \times \frac{1}{(2 \pi i)^{\barell_1}}\ointdots \frac{1}{(2 \pi i)^{\barell_2}}\ointdots \cdots \frac{1}{(2 \pi i)^{\barell_d}}\ointdots (-1)^{1 \times \barell_1}(-1)^{2 \times \barell_2}\cdots(-1)^{d \times \barell_d} \nonumber \\
& \quad  \times \frac{\zeta(1+s+u)^{(L_d+1)(\barL_d+1)}}{\zeta(1+\alpha+s)^{L_d+1}\zeta(1+\beta+u)^{\barL_d+1}} A_{\alpha,\beta}(\{z_{q,i}\}, \{w_{\barq,j}\}; s, u) \nonumber \\ 
& \quad  \times \bigg(\prod_{q=1}^d \prod_{i=1}^{\ell_q} \prod_{\barq=1}^d \prod_{j=1}^{\barell_\barq} \zeta(1+s+u+z_{q,i}+w_{\barq,j}) \bigg) \nonumber \\
& \quad  \times \bigg(\prod_{q=1}^d \prod_{i=1}^{\ell_q} \frac{\zeta(1+\alpha+s+z_{q,i})}{\zeta(1+s+u+z_{q,i})^{\barL_d+1}}\bigg) \bigg( \prod_{\barq=1}^d \prod_{j=1}^{\barell_\barq} \frac{\zeta(1+\beta+u+z_{\barq,j})}{\zeta(1+s+u+w_{\barq,j})^{L_d+1}} \bigg) \nonumber \\
& \quad  \times \frac{dz_{1,1}}{z_{1,1}^{1+1}} \cdots \frac{dz_{1,\ell_1}}{z_{1,\ell_1}^{1+1}} \frac{dz_{2,1}}{z_{2,1}^{2+1}} \cdots \frac{dz_{2,\ell_2}}{z_{2,\ell_2}^{2+1}} \cdots \frac{dz_{d,1}}{z_{d,1}^{d+1}} \cdots \frac{dz_{d,\ell_d}}{z_{d,\ell_d}^{d+1}} \nonumber \\
& \quad  \times \frac{dw_{1,1}}{w_{1,1}^{1+1}} \cdots \frac{dw_{1,\barell_1}}{w_{1,\barell_1}^{1+1}} \frac{dw_{2,1}}{w_{2,1}^{2+1}} \cdots \frac{dw_{2,\barell_2}}{w_{2,\barell_2}^{2+1}} \cdots \frac{dw_{d,1}}{w_{d,1}^{d+1}} \cdots \frac{dz_{w,\barell_d}}{z_{w,\barell_d}^{d+1}} \frac{1}{(\log N)^{\sum r \ell_r} (\log N)^{\sum \barr \barell_\barr} }.
\end{align}
Here $A=A_{\alpha,\beta}(\{z_{q,i}\}, \{w_{\barq,j}\}; s, u) = A_{\alpha,\beta}(\mathbf{z},\mathbf{w};s,u)$ is again an arithmetical factor that is absolutely convergent in some half-plane containing the origin. We can re-construct this arithmetical term by assembling back the nine cases above. The term $A_{\alpha, \beta}$ is given by
\begin{align} \label{Arithemticcleangeneral}
A_{\alpha,\beta}(\mathbf{z},\mathbf{w};s,u) &= \prod_p \bigg\{ \frac{(1-\tfrac{1}{p^{1+s+u}})^{(L_d+1)(\barL_d+1)}}{(1- \tfrac{1}{p^{1+\alpha+s}})^{(L_d+1)}(1- \tfrac{1}{p^{1+\beta+u}})^{(\barL_d+1)}}  \bigg( \prod_{q=1}^d \prod_{i=1}^{\ell_q} \prod_{\barq=1}^{d} \prod_{j=1}^{\barell_\barq} \bigg( 1-\frac{1}{p^{1+s+u+z_{q,i}+w_{\barq,j}}}\bigg) \bigg) \nonumber \\
& \times \bigg(\prod_{q=1}^d \prod_{i=1}^{\ell_q} \frac{1-\tfrac{1}{p^{1+\alpha+s+z_{q,i}}}}{(1-\tfrac{1}{p^{1+s+u+z_{q,i}}})^{(\barL_d+1)}}\bigg) \bigg(\prod_{\barq=1}^d \prod_{j=1}^{\barell_\barq} \frac{1-\tfrac{1}{p^{1+\beta+u+w_{\barq,j}}}}{(1-\tfrac{1}{p^{1+s+u+w_{\barq,j}}})^{(L_d+1)}} \bigg) \nonumber \\
& \times \bigg[ 1-\frac{L_d+1}{p^{1+\alpha+s}} -\frac{\barL_d+1}{p^{1+\beta+u}} + \frac{(L_d+1)(\barL_d+1)}{p^{1+s+u}} - \sum_{\barq=1}^d \sum_{j=1}^{\barell_\barq} \frac{L_d+1}{p^{1+s+u+w_{\barq,j}}} - \sum_{q=1}^d \sum_{i=1}^{\ell_q} \frac{\barL_d+1}{p^{1+s+u+z_{q,i}}} \nonumber \\
&  \quad \quad \, + \sum_{q=1}^d \sum_{i=1}^{\ell_q} \frac{1}{p^{1+\alpha+s+z_{q,i}}} + \sum_{\barq=1}^d \sum_{j=1}^{\barell_\barq} \frac{1}{p^{1+\beta+u+w_{\barq,j}}} + \sum_{q=1}^d \sum_{i=1}^{\ell_q} \sum_{\barq=1}^d \sum_{j=1}^{\barell_\barq} \frac{1}{p^{1+s+u+z_{q,i}+w_{\barq,j}}} \bigg] \bigg\}.
\end{align}
We remark that
\begin{align*}
A_{\alpha,\beta}(\mathbf{z},\mathbf{w};\beta,\alpha) &= \prod_p \bigg\{ \bigg( 1-\frac{1}{p^{1+\alpha+\beta}}\bigg)^{L_d \barL_d - 1} \bigg( \prod_{q=1}^d \prod_{i=1}^{\ell_q} \prod_{\barq=1}^{d} \prod_{j=1}^{\barell_\barq} \bigg( 1-\frac{1}{p^{1+\alpha+\beta+z_{q,i}+w_{\barq,j}}}\bigg) \bigg) \nonumber \\
& \times \bigg(\prod_{q=1}^d \prod_{i=1}^{\ell_q} \frac{1}{(1-\tfrac{1}{p^{1+\alpha+\beta+z_{q,i}}})^{\barL_d}}\bigg) \bigg(\prod_{\barq=1}^d \prod_{j=1}^{\barell_\barq} \frac{1}{(1-\tfrac{1}{p^{1+\alpha+\beta+w_{\barq,j}}})^{L_d}} \bigg) \nonumber \\
& \times \bigg[ 1-\frac{L_d+1}{p^{1+\beta+\alpha}} -\frac{\barL_d+1}{p^{1+\alpha+\beta}} + \frac{(L_d+1)(\barL_d+1)}{p^{1+\alpha+\beta}} - \sum_{\barq=1}^d \sum_{j=1}^{\barell_\barq} \frac{L_d+1}{p^{1+\alpha+\beta+w_{\barq,j}}} - \sum_{q=1}^d \sum_{i=1}^{\ell_q} \frac{\barL_d+1}{p^{1+\alpha+\beta+z_{q,i}}} \nonumber \\
&  \quad \quad \, + \sum_{q=1}^d \sum_{i=1}^{\ell_q} \frac{1}{p^{1+\beta+\alpha+z_{q,i}}} + \sum_{\barq=1}^d \sum_{j=1}^{\barell_\barq} \frac{1}{p^{1+\alpha+\beta+w_{\barq,j}}} + \sum_{q=1}^d \sum_{i=1}^{\ell_q} \sum_{\barq=1}^d \sum_{j=1}^{\barell_\barq} \frac{1}{p^{1+\alpha+\beta+z_{q,i}+w_{\barq,j}}} \bigg] \bigg\}.
\end{align*}
Hence at $\mathbf{z}=\mathbf{w}=\mathbf{0}$ we obtain
\begin{align*} 
A_{\alpha,\beta}(\mathbf{0},\mathbf{0};\beta,\alpha) %&= \prod_p \bigg\{ \bigg( 1-\frac{1}{p^{1+\alpha+\beta}}\bigg)^{L_d \barL_d - 1} \bigg( \prod_{q=1}^d \prod_{i=1}^{\ell_q} \prod_{\barq=1}^{d} \prod_{j=1}^{\barell_\barq} \bigg( 1-\frac{1}{p^{1+\alpha+\beta}}\bigg) \bigg) \nonumber \\
%& \times \bigg(\prod_{q=1}^d \prod_{i=1}^{\ell_q} \frac{1}{(1-\tfrac{1}{p^{1+\alpha+\beta}})^{\barL_d}}\bigg) \bigg(\prod_{\barq=1}^d \prod_{j=1}^{\barell_\barq} \frac{1}{(1-\tfrac{1}{p^{1+\alpha+\beta}})^{L_d}} \bigg) \nonumber \\
%& \times \bigg[ 1-\frac{L_d+1}{p^{1+\beta+\alpha}} -\frac{\barL_d+1}{p^{1+\alpha+\beta}} + \frac{(L_d+1)(\barL_d+1)}{p^{1+\alpha+\beta}} - \sum_{\barq=1}^d \sum_{j=1}^{\barell_\barq} \frac{L_d+1}{p^{1+\alpha+\beta}} - \sum_{q=1}^d \sum_{i=1}^{\ell_q} \frac{\barL_d+1}{p^{1+\alpha+\beta}} \nonumber \\
%&  \quad \quad \, + \sum_{q=1}^d \sum_{i=1}^{\ell_q} \frac{1}{p^{1+\beta+\alpha}} + \sum_{\barq=1}^d \sum_{j=1}^{\barell_\barq} \frac{1}{p^{1+\alpha+\beta}} + \sum_{q=1}^d \sum_{i=1}^{\ell_q} \sum_{\barq=1}^d \sum_{j=1}^{\barell_\barq} \frac{1}{p^{1+\alpha+\beta}} \bigg] \bigg\} \nonumber \\
& = \prod_p \bigg\{ \bigg(1-\frac{1}{p^{1+\alpha+\beta}} \bigg)^{-1} \bigg[1-\frac{1}{p^{1+\alpha+\beta}} \bigg] \bigg\} = 1.
\end{align*}
Finally, we may define $\zeta(1+\alpha+\beta) \times (\operatorname{integrand})$ to be
\begin{align} \label{generalintegrandM}
\mathfrak{M}_{\alpha,\beta}(\mathbf{z},\mathbf{w}; s, u) &:= \frac{\zeta(1+s+u)^{(L_d+1)(\barL_d+1)}\zeta(1+\alpha+\beta)}{\zeta(1+\alpha+s)^{L_d+1}\zeta(1+\beta+u)^{\barL_d+1}} A_{\alpha,\beta}(\mathbf{z},\mathbf{w}; s, u) \nonumber \\ 
& \quad  \times \bigg(\prod_{q=1}^d \prod_{i=1}^{\ell_q} \prod_{\barq=1}^d \prod_{j=1}^{\barell_\barq} \zeta(1+s+u+z_{q,i}+w_{\barq,j}) \bigg) \nonumber \\
& \quad  \times \bigg(\prod_{q=1}^d \prod_{i=1}^{\ell_q} \frac{\zeta(1+\alpha+s+z_{q,i})}{\zeta(1+s+u+z_{q,i})^{\barL_d+1}}\bigg) \bigg( \prod_{\barq=1}^d \prod_{j=1}^{\barell_\barq} \frac{\zeta(1+\beta+u+w_{\barq,j})}{\zeta(1+s+u+w_{\barq,j})^{L_d+1}} \bigg),
\end{align}
and observe that
\begin{align*}
\mathfrak{M}_{\alpha,\beta}(\mathbf{z},\mathbf{w}; \beta, \alpha) &:= \zeta(1+\alpha+\beta)^{L_d \barL_d} A_{\alpha,\beta}(\mathbf{z},\mathbf{w}; \beta, \alpha) \nonumber \\ 
& \quad  \times \bigg(\prod_{q=1}^d \prod_{i=1}^{\ell_q} \prod_{\barq=1}^d \prod_{j=1}^{\barell_\barq} \zeta(1+\alpha+\beta+z_{q,i}+w_{\barq,j}) \bigg) \nonumber \\
& \quad  \times \bigg(\prod_{q=1}^d \prod_{i=1}^{\ell_q} \frac{1}{\zeta(1+\alpha+\beta+z_{q,i})^{\barL_d}}\bigg) \bigg( \prod_{\barq=1}^d \prod_{j=1}^{\barell_\barq} \frac{1}{\zeta(1+\alpha+\beta+w_{\barq,j})^{L_d}} \bigg),
\end{align*}
%\begin{align*}
%\mathfrak{M}_{\alpha,\beta}(\mathbf{0},\mathbf{0}; \beta, \alpha) &= \zeta(1+\alpha+\beta)^{L_d \barL_d} A_{\alpha,\beta}(\mathbf{0},\mathbf{0}; \beta, \alpha)  \bigg(\prod_{q=1}^d \prod_{i=1}^{\ell_q} \prod_{\barq=1}^d \prod_{j=1}^{\barell_\barq} \zeta(1+\alpha+\beta) \bigg) \nonumber \\
%& \quad  \times \bigg(\prod_{q=1}^d \prod_{i=1}^{\ell_q} \frac{1}{\zeta(1+\alpha+\beta)^{\barL_d}}\bigg) \bigg( \prod_{\barq=1}^d \prod_{j=1}^{\barell_\barq} \frac{}{\zeta(1+\alpha+\beta)^{L_d}} \bigg) = 1.
%\end{align*}
so that $\mathfrak{M}_{\alpha,\beta}(\mathbf{0},\mathbf{0}; \beta, \alpha)=1$. Indeed the formulas
\begin{align} \label{0pointvalue1}
A_{\alpha,\beta}(\mathbf{0},\mathbf{0};\beta,\alpha) = \mathfrak{M}_{\alpha,\beta}(\mathbf{0},\mathbf{0}; \beta, \alpha) = 1
\end{align}
are key properties that will be used frequently. We summarize this result below.

\begin{theorem} \label{theoremmaintermerror47}
Let $\mathfrak{a}_d$ and $\mathfrak{b}_e$ be given by \eqref{specialgeneral1} and \eqref{specialgeneral2}, respectively and set
\begin{align*}
\mathscr{D}X = \mathscr{D}X_{d,\ell_d} := \frac{dx_{1,1}}{x_{1,1}^{1+1}} \cdots \frac{dx_{1,\ell_1}}{x_{1,\ell_1}^{1+1}} \frac{dx_{2,1}}{x_{2,1}^{2+1}} \cdots \frac{dx_{2,\ell_2}}{x_{2,\ell_2}^{2+1}} \cdots \frac{dx_{d,1}}{x_{d,1}^{d+1}} \cdots \frac{dx_{d,\ell_d}}{x_{d,\ell_d}^{d+1}}.
\end{align*}
One has that
\begin{align*}
& \sumtwo_{1 \le d,e \le \infty} \frac{\mathfrak{a}_d \overline{\mathfrak{b}_e}}{[d,e]} \frac{(d,e)^{\alpha+\beta}}{d^{\alpha+s} e^{\beta+u}} P_{d,\ell_d}\bigg( \frac{\log(N/d)}{\log N} \bigg) P_{d,\barell_d}\bigg( \frac{\log(N/e)}{\log N} \bigg)\nonumber \\
& \quad \quad \quad \quad \times \int_{-\infty}^\infty \bigg(\zeta(1+\alpha+\beta)+\zeta(1-\alpha-\beta)\bigg(\frac{2\pi de}{t(d,e)^2}\bigg)^{\alpha+\beta} \bigg)\Phi\bigg(\frac{t}{T}\bigg)dt + O(\mathcal{E}_3) \\
& =  \sumtwo_{i,j} \frac{p_{d,\ell_d,i}p_{d,{\bar \ell}_d,j}i!j!}{(\log N)^{i+j}}\frac{1}{(2\pi i)^2}\int_{(1)}\int_{(1)} \frac{1}{(\log N)^{\sum r\ell_r} (\log N)^{\sum \barr\barell_\barr}} \\
& \times  \frac{1}{(2\pi i)^{L_d+\barL_d}}\bigg( \oint \oint \mathfrak{M}_{\alpha,\beta}(\mathbf{z},\mathbf{w};s,u) \mathscr{D}Z \mathscr{D}W  \\
& \quad \quad \quad \quad - \oint \oint T^{-\alpha-\beta}\mathfrak{M}_{-\beta,-\alpha}(\mathbf{z},\mathbf{w};s,u) \mathscr{D}Z \mathscr{D}W \bigg) \frac{ds}{s^{i+1}}\frac{du}{u^{j+1}} + O(T^{1-\varepsilon}),
\end{align*}
where $\mathfrak{M}_{\alpha,\beta}$ is given by \eqref{generalintegrandM}.
\end{theorem}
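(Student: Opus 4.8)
The plan is to combine Theorem \ref{meanvalueintegral} with the Mellin-transform expansion of the cutoff polynomials and then to evaluate the resulting arithmetic sum $\mathcal{S}_d$ by the autocorrelation-ratios machinery already assembled in the section. First I would substitute the coefficients \eqref{specialgeneral1} and \eqref{specialgeneral2} into the right-hand side of Theorem \ref{meanvalueintegral}; since $\mathfrak{a}_d, \mathfrak{b}_e \ll_\varepsilon n^\varepsilon$ (because $\Lambda_k(n) \ll n^\varepsilon$ and the denominators are powers of $\log N$), the error term is the third case $\mathcal{E}_3 = T^\varepsilon(N^{7/4}+N^{7/8}T^{1/2})$, which is $O(T^{1-\varepsilon})$ once $\theta_d = \tfrac{4}{7}-\varepsilon$; this is where one uses the improvement proved in Theorem \ref{meanvalueintegral}. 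Next I would write each cutoff polynomial via its Mellin representation
\[
P_{d,\ell_d}\bigg(\frac{\log(N/d)}{\log N}\bigg) = \sum_i \frac{p_{a,i}\,i!}{\log^i N}\,\frac{1}{2\pi i}\int_{(1)}\Big(\frac{N}{d}\Big)^s\frac{ds}{s^{i+1}},
\]
and similarly for $e$ with variable $u$, exactly as in the display preceding \eqref{presumS}. This pulls the $d$- and $e$-dependence into the exponents $d^{-(\alpha+s)}$, $e^{-(\beta+u)}$, so that the inner double sum over $d,e$ becomes precisely $\mathcal{S}_d$ with the shifted exponents — the object computed in \eqref{generaltermsauto}.

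The heart of the argument is the evaluation of $\mathcal{S}_d$. Here I would follow verbatim the computation already carried out in this section: factor $d$ and $e$ into the prime-power blocks dictated by the convolution structure, apply Cauchy's formula \eqref{cauchydgeneral} to convert each $\log^q$ into a contour integral (introducing the variables $z_{q,i}$, $w_{\barq,j}$ and the measure $\mathscr{D}Z\,\mathscr{D}W$), recognise the resulting sum over $d,e$ as multiplicative and expand it as an Euler product, and then extract the constant and linear terms at each prime by enumerating the nine admissible choices $\{d_0,d_{q,i},e_0,e_{\barq,j}\}$. The nine local contributions reassemble into the ratio of zeta functions times the arithmetic factor $A_{\alpha,\beta}$ displayed in \eqref{Arithemticcleangeneral}, i.e.\ into $\mathfrak{M}_{\alpha,\beta}(\mathbf{z},\mathbf{w};s,u)/\zeta(1+\alpha+\beta)$ as in \eqref{generalintegrandM}. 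One then multiplies by the $\zeta(1+\alpha+\beta)$ coming from the first term of the $t$-integral in Theorem \ref{meanvalueintegral} and integrates against $\Phi(t/T)$, which contributes the factor $T\widehat{\Phi}(0)$ and is absorbed into the $O(T^{1-\varepsilon})$ together with the lower-order terms. For the second term of the $t$-integral, the factor $(2\pi de/(t(d,e)^2))^{\alpha+\beta}$ precisely effects the substitution $\alpha\mapsto-\beta$, $\beta\mapsto-\alpha$ in the exponents and produces the $T^{-\alpha-\beta}$ prefactor, yielding the $\mathfrak{M}_{-\beta,-\alpha}$ term in the stated identity; this is where one uses that $\int_{-\infty}^\infty (2\pi/t)^{\alpha+\beta}\Phi(t/T)\,dt = T^{1-\alpha-\beta}\int (2\pi/x)^{\alpha+\beta}\Phi(x)\,dx$ and that $\alpha,\beta\ll L^{-1}$ so $T^{-\alpha-\beta}\asymp 1$.

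The main obstacle is purely bookkeeping: carrying the multi-index notation $\{z_{q,i}\}_{1\le q\le d,\,1\le i\le \ell_q}$ and $\{w_{\barq,j}\}$ through the factorization, the Euler product, and the reassembly without error, and making sure the nine local cases are complete and correctly weighted by the multiplicities $L_d+1$ and $\barL_d+1$ (these come from $\mu^{\star L_d+1}(p) = -(L_d+1)$, since $n$ is no longer restricted to be square-free). The analytic input — the Weil/Deshouillers–Iwaniec bounds feeding into $\mathcal{E}_3$, absolute convergence of $A_{\alpha,\beta}$ in a neighbourhood of the origin so that the contour integrals are legitimate, and the trivial bound on the $t$-integral — is all either already established in Theorem \ref{meanvalueintegral} or entirely standard. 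I would therefore present the proof as: (i) invoke Theorem \ref{meanvalueintegral} and the Mellin expansion to reduce to $\mathcal{S}_d$; (ii) cite the already-completed computation \eqref{generaltermsauto}–\eqref{Arithemticcleangeneral} of $\mathcal{S}_d$; (iii) perform the $t$-integration and track the $\alpha\leftrightarrow-\beta$ symmetry to obtain the two $\mathfrak{M}$-terms; (iv) collect error terms into $O(T^{1-\varepsilon})$. No genuinely new idea is needed beyond what the section has already developed; the theorem is essentially a packaging statement.
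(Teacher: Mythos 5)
Your proposal is correct and follows essentially the same route as the paper: Theorem \ref{theoremmaintermerror47} is presented there precisely as a packaging of \eqref{presumS} together with the evaluation of $\mathcal{S}_d$ via Cauchy's formula \eqref{cauchydgeneral}, the Euler-product expansion with the nine local cases, and the reassembly into $\mathfrak{M}_{\alpha,\beta}$ of \eqref{generalintegrandM}, with the second term obtained by the $\alpha\mapsto-\beta$, $\beta\mapsto-\alpha$ symmetry and the $T^{-\alpha-\beta}$ prefactor, and the error absorbed through $\mathcal{E}_3$ from Theorem \ref{meanvalueintegral}.
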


Again, this is as far as one can go before things get out of hand and in fact it is bad enough as it is already. The procedure to obtain the main terms is exactly the same as it was in the case $d=1$. We give the general recipe below before we proceed.
\begin{enumerate}
\item[(1)] Choose the degree $d$ of the polynomial $Q$. The higher the degree, the more taxing the calculation.
\item[(2)] Choose a truncation $\ell, \barell$. Once more, the higher the truncation, the more taxing the calculation.
\item[(3)] Use the logarithmic derivative of \eqref{Arithemticcleangeneral} in conjunction with Fa\`{a} di Bruno's formula \eqref{faadibruno} and with \eqref{0pointvalue1} to isolate the derivatives of $A_{\alpha,\beta}(\{z_{q,i}\}, \{w_{\barq,j}\}; s, u)$ at $s=\alpha$, $u=\beta$ and $\{z_{q,i}\}=\{w_{\barq,j}\}=0$. Most of the derivatives will be $0$ and the remaining ones will go into an error term of size $\ll T/L$. In other words, only the terms that have a coefficient $A_{\alpha,\beta}(\mathbf{0},\mathbf{0};s,u)$ will survive.
\item[(4)] Insert the results back into $I_1$ (recall that $I_2$ is basically the same as $I_1$ up to symmetries) and deform the paths of integration to $\real(s)=\real(u)=\delta$ with $\delta>0$ then take $\delta \asymp L^{-1}$ and bound trivially. This allows us to change $A_{\alpha,\beta}(\mathbf{0},\mathbf{0};s,u)$ into $A_{\alpha,\beta}(\mathbf{0},\mathbf{0};\beta,\alpha)$ with an acceptable error term.
\item[(5)] Use Dirichlet convolutions of appropriate $\mathbf{1} \star \Lambda_1^{k_1} \star \Lambda_2^{k_2} \star \cdots \star \Lambda_d^{k_d}$ to separate the complex variables $s$ and $u$ along with knowledge of \eqref{0pointvalue1}. The truncation of the $n$-sum will be at $N$.
\item[(6)] Divide and conquer strategy: separate the resulting terms and apply Lemma \ref{contourlemma} below to each one of them.
\item[(7)] Sum over $i$ and $j$ to recover the polynomials $P_{d,\ell}$ and $P_{d,\barell}$.
\item[(7)] Use Euler-Maclaurin result (Lemma \ref{eulermaclaurinlemma}) below to turn the remaining sums involving arithmetical functions into integrals.
\end{enumerate}

\begin{remark}
If we look at the nine cases that we considered earlier in this section to get to the autocorrelation ratio of $\mathcal{S}_d$, it is clear  that each of these cases would remain unaffected by the presence of $\mu^2(d)$ and $\mu^2(e)$. Indeed each case would be multiplied by either $\mu^2(p^0)=\mu^2(1)=1$ or $\mu^2(p^1)=\mu^2(p)=(-1)^2=1$, where $p$ is always a prime. We will be explicit about this important point. If we had taken
\begin{align} \label{alternativemu2}
\mathfrak{a}_n = \frac{\mu^2(n)(\mu \star \Lambda_1^{\star \ell_1} \star \Lambda_2^{\star \ell_2} \star \cdots \star \Lambda_d^{\star \ell_d})(n)}{(\log N)^{\sum r \ell_r}},
\end{align}
instead of \eqref{specialgeneral1}, then the affected possibilities (once symmetries are accounted for) in the Euler product are
\begin{enumerate}
\item[(1)] The case $\{0,0,0,0\}$ yields
\begin{align*}
\frac{\mu^2(1)\mu^2(1)}{1 \times 1} = 1.
\end{align*}
\item[(2)] The case $\{1,0,0,0\}$ yields 
\begin{align*}
\frac{\mu^2(p) \mu^{\star L_d+1}(p) \mu^2(1)}{[p,1]} \frac{(p,1)^{\alpha+\beta}}{p^{\alpha+s}} = - \frac{L_d+1}{p^{1+\alpha+s}}.
\end{align*}
\item[(4)] The case $\{1,0,1,0\}$ yields
\begin{align*}
\frac{\mu^2(p)\mu^{\star L_d+1}(p)\mu^2(p)\mu^{\star \barL_d+1}(p)}{[p,p]} \frac{(p,p)^{\alpha+\beta}}{p^{\alpha+s}p^{\beta+u}} = \frac{(L_d+1)(\barL_d+1)}{p^{1+s+u}}.
\end{align*}
\item[(5)] The case $\{1,0,0,1\}$ yields 
\begin{align*}
\frac{\mu^2(p)\mu^{\star L_d+1}(p)\mu^2(1)}{[p,p]} \frac{(p,p)^{\alpha+\beta}}{p^{\alpha+s}p^{\beta+u+w_{\barq,j}}} = -\frac{L_d+1}{p^{1+s+u+w_{\barq,j}}}.
\end{align*}
\item[(7)] The case $\{0,1,0,0\}$ yields
\begin{align*}
\frac{\mu^2(1)\mu^2(1)}{[p,1]} \frac{(p,1)^{\alpha+\beta}}{p^{\alpha+s+z_{q,i}}} = \frac{1}{p^{1+\alpha+s+z_{q,i}}}.
\end{align*}
\item[(9)] The case $\{0,1,0,1\}$ yields
\begin{align*}
\frac{\mu^2(1)\mu^2(1)}{[p,p]} \frac{(p,p)^{\alpha+\beta}}{p^{\beta+u+w_{\barq,j}} p^{\alpha+s+z_{q,i}}} = \frac{1}{p^{1+s+u+z_{q,i}+w_{\barq,j}}}.
\end{align*}
\end{enumerate}
Moreover, the arithmetical factor that would arise from \eqref{alternativemu2} also behaves nicely.\\

It is likely that the present authors would have not noticed this had they not used the autocorrelation of ratios approach. Indeed this shows that the powerful autocorrelation of ratios technique is really the `way to do things'.\\

In any case, Feng's approach is certainly commendable for his main terms are accurate and his outstanding intuition about the size of the mollifier was spot on. 
\end{remark}

\subsection{Example: the quadratic case $d=2$}

We have illustrated the mechanism of this twisted moment in its easiest manifestation, that is when $d=1$ and when the truncations are small, i.e. $\ell_1, \ell_2 \le 2$. Now that we have given the general procedure, it is advisable to see what happens when the degree of $Q$ is increased from $1$ to $2$. Indeed, this will make the above presentation more agreeable and in fact these terms have never before appeared in the literature. 

With $d=2$, we have $L_d = \ell_1 + \ell_2$ and $\barL_d = \barell_1 + \barell_2$. Let us set $\ell_1 = \ell_2 = \barell_1 = \barell_2 = 1$ for simplicity and use $\widetilde{\mathfrak{M}}_{\alpha,\beta}(z_{1,1},z_{2,1},w_{1,1},w_{2,1};s,u)$ to denote $\mathfrak{M}_{\alpha,\beta}(z_{1,1},z_{2,1},w_{1,1},w_{2,1};s,u)$ with the arithmetical factor $A_{\alpha,\beta}$ replaced by $1$. 

We start with the arithmetical product. We can compute the logarithm from \eqref{Arithemticcleangeneral} of $A_{\alpha,\beta}$ so that
\begin{align} \label{logarithmgeneralarithmetic}
\log A_{\alpha,\beta}(\mathbf{z},\mathbf{w};s,u) = \sum_p & \bigg\{ \log \frac{(1-\tfrac{1}{p^{1+s+u}})^{(L_d+1)(\barL_d+1)}}{(1- \tfrac{1}{p^{1+\alpha+s}})^{(L_d+1)}(1- \tfrac{1}{p^{1+\beta+u}})^{(\barL_d+1)}} \nonumber \\
& +  \sum_{q=1}^d \sum_{i=1}^{\ell_q} \sum_{\barq=1}^{d} \sum_{j=1}^{\barell_\barq} \log \bigg( 1-\frac{1}{p^{1+s+u+z_{q,i}+w_{\barq,j}}}\bigg) \nonumber \\
& + \sum_{q=1}^d \sum_{i=1}^{\ell_q} \frac{1-\tfrac{1}{p^{1+\alpha+s+z_{q,i}}}}{(1-\tfrac{1}{p^{1+s+u+z_{q,i}}})^{(\barL_d+1)}} + \sum_{\barq=1}^d \sum_{j=1}^{\barell_\barq} \frac{1-\tfrac{1}{p^{1+\beta+u+w_{\barq,j}}}}{(1-\tfrac{1}{p^{1+s+u+w_{\barq,j}}})^{(L_d+1)}} \nonumber \\
& + \log \bigg[ 1-\frac{L_d+1}{p^{1+\alpha+s}} -\frac{\barL_d+1}{p^{1+\beta+u}} + \frac{(L_d+1)(\barL_d+1)}{p^{1+s+u}} \nonumber \\
&  \quad \quad \ - \sum_{\barq=1}^d \sum_{j=1}^{\barell_\barq} \frac{L_d+1}{p^{1+s+u+w_{\barq,j}}} - \sum_{q=1}^d \sum_{i=1}^{\ell_q} \frac{\barL_d+1}{p^{1+s+u+z_{q,i}}} \nonumber \\
&  \quad \quad \, + \sum_{q=1}^d \sum_{i=1}^{\ell_q} \frac{1}{p^{1+\alpha+s+z_{q,i}}} + \sum_{\barq=1}^d \sum_{j=1}^{\barell_\barq} \frac{1}{p^{1+\beta+u+w_{\barq,j}}} \nonumber \\
&  \quad \quad \ + \sum_{q=1}^d \sum_{i=1}^{\ell_q} \sum_{\barq=1}^d \sum_{j=1}^{\barell_\barq} \frac{1}{p^{1+s+u+z_{q,i}+w_{\barq,j}}} \bigg] \bigg\}.
\end{align}

Using \eqref{logarithmgeneralarithmetic} it is straightforward to show that
\begin{align*}
\frac{\partial}{\partial z_{1,1}} \log A_{\alpha,\beta}(\mathbf{z},\mathbf{w};\beta,\alpha) \bigg|_{\mathbf{z}=\mathbf{w}=\mathbf{0}} &= 0, \\
\frac{\partial^2}{\partial z_{2,1}^2} \log A_{\alpha,\beta}(\mathbf{z},\mathbf{w};\beta,\alpha) \bigg|_{\mathbf{z}=\mathbf{w}=\mathbf{0}} &= 0, \\
\frac{\partial^3}{\partial z_{1,1} \partial z_{2,1}^2} \log A_{\alpha,\beta}(\mathbf{z},\mathbf{w};\beta,\alpha) \bigg|_{\mathbf{z}=\mathbf{w}=\mathbf{0}} &= 0, \\
\frac{\partial^2}{\partial z_{1,1}\partial w_{1,1}} \log A_{\alpha,\beta}(\mathbf{z},\mathbf{w};\beta,\alpha) \bigg|_{\mathbf{z}=\mathbf{w}=\mathbf{0}} &= \sum_p \bigg(\frac{\log^2 p}{p^{1+\alpha+\beta}}-\frac{p^{1+\alpha+\beta} \log^2 p}{(p^{1+\alpha+\beta}-1)^2}\bigg), \\
\frac{\partial^3}{\partial z_{1,1}\partial w_{2,1}^2} \log A_{\alpha,\beta}(\mathbf{z},\mathbf{w};\beta,\alpha) \bigg|_{\mathbf{z}=\mathbf{w}=\mathbf{0}} &= \sum_p \bigg(\frac{p^{1+\alpha+\beta}(1+p^{1+\alpha+\beta})\log^3 p}{(p^{1+\alpha+\beta}-1)^3} - \frac{\log^3 p}{p^{1+\alpha+\beta}}\bigg), \\
\frac{\partial^4}{\partial z_{2,1}^2\partial w_{2,1}^2} \log A_{\alpha,\beta}(\mathbf{z},\mathbf{w};\beta,\alpha) \bigg|_{\mathbf{z}=\mathbf{w}=\mathbf{0}} &= \sum_p \bigg(\frac{\log^4 p}{p^{1+\alpha+\beta}} - \frac{p^{1+\alpha+\beta}(1+p^{1+\alpha+\beta}(4+p^{1+\alpha+\beta})) \log^4 p}{(p^{1+\alpha+\beta}-1)^4} \bigg), \\
\frac{\partial^4}{\partial z_{1,1}\partial z_{2,1}^2 \partial w_{1,1}} \log A_{\alpha,\beta}(\mathbf{z},\mathbf{w};\beta,\alpha) \bigg|_{\mathbf{z}=\mathbf{w}=\mathbf{0}} &= 0, \\
\frac{\partial^5}{\partial z_{1,1}\partial z_{2,1}^2 \partial w_{1,1} \partial w_{2,1}} \log A_{\alpha,\beta}(\mathbf{z},\mathbf{w};\beta,\alpha) \bigg|_{\mathbf{z}=\mathbf{w}=\mathbf{0}} &= 0, \\
\frac{\partial^6}{\partial z_{1,1}\partial z_{2,1}^2 \partial w_{1,1} \partial w_{2,1}^2} \log A_{\alpha,\beta}(\mathbf{z},\mathbf{w};\beta,\alpha) \bigg|_{\mathbf{z}=\mathbf{w}=\mathbf{0}} &= 0 ,
\end{align*}
etc... The pattern as to what terms will survive becomes clearer (some of the `balanced' mixed derivatives). However, it does not really matter because these surviving terms will get absorbed in an error term of size $O(T/L)$ much like in the case $d=1$ that we handled previously.

A lengthy residue calculus computation, again we are paying the toll for having used \eqref{cauchydgeneral}, shows that (\texttt{Mathematica 11.2} was used to produce this)
\begin{align*}
& \oint \oint \oint \oint \widetilde{\mathfrak{M}}_{\alpha,\beta}(z_{1,1},z_{2,1},w_{1,1},w_{2,1};s,u) \frac{dz_{1,1}}{z_{1,1}^2}\frac{dz_{2,1}}{z_{2,1}^3}\frac{dw_{1,1}}{w_{1,1}^2}\frac{dw_{2,1}}{w_{2,1}^3} = \frac{\zeta(1+s+u)\zeta(1+\alpha+\beta)}{\zeta (1+\alpha+s) \zeta (1+\beta+u)}  \\
& \times \bigg\{ 12 \frac{\zeta '(1+s+u)^6}{\zeta
   (1+s+u)^6}+12 \frac{\zeta '(1+\alpha+s) \zeta '(1+s+u)^5}{\zeta (1+\alpha+s)\zeta (1+s+u)^5} \\
& \quad +12\frac{ \zeta '(1+\beta+u) \zeta
   '(1+s+u)^5}{\zeta (1+\beta+u) \zeta (1+s+u)^5}-8\frac{
   \zeta '(1+\alpha+s)^2 \zeta '(1+s+u)^4}{\zeta (1+\alpha+s)^2 \zeta
   (1+s+u)^4} \\
& \quad -8 \frac{\zeta '(1+\beta+u)^2 \zeta '(1+s+u)^4}{
   \zeta (1+\beta+u)^2 \zeta (1+s+u)^4}-32 \frac{\zeta '(1+\alpha+s) \zeta
   '(1+\beta+u) \zeta '(1+s+u)^4}{\zeta (1+\alpha+s) \zeta (1+\beta+u) \zeta
   (1+s+u)^4} \\
& \quad -4 \frac{\zeta ''(1+\alpha+s) \zeta '(1+s+u)^4}{\zeta (1+\alpha+s)^2
   \zeta (1+\beta+u) \zeta (1+s+u)^3}-4 \frac{\zeta ''(1+\beta+u) \zeta
   '(1+s+u)^4}{\zeta (1+\beta+u) \zeta (1+s+u)^4} \\
& \quad -48\frac{
   \zeta ''(1+s+u) \zeta '(1+s+u)^4}{\zeta
   (1+s+u)^5}+8 \frac{\zeta '(1+\alpha+s) \zeta '(1+\beta+u)^2 \zeta
   '(1+s+u)^3}{\zeta (1+\alpha+s) \zeta (1+\beta+u)^2 \zeta (1+s+u)^3} \\
& \quad +8\frac{
   \zeta '(1+\alpha+s)^2 \zeta '(1+\beta+u) \zeta '(1+s+u)^3}{\zeta (1+\alpha+s)^2
   \zeta (1+\beta+u) \zeta (1+s+u)^3}-2 \frac{\zeta '(1+\alpha+s) \zeta
   ''(1+\alpha+s) \zeta '(1+s+u)^3}{\zeta (1+\alpha+s)^2 \zeta
   (1+s+u)^3} \\
& \quad +4 \frac{\zeta '(1+\beta+u) \zeta ''(1+\alpha+s) \zeta
   '(1+s+u)^3}{\zeta (1+\alpha+s) \zeta (1+\beta+u) \zeta (1+s+u)^3}+4\frac{
   \zeta '(1+\alpha+s) \zeta ''(1+\beta+u) \zeta '(1+s+u)^3}{\zeta (1+\alpha+s) \zeta
   (1+\beta+u) \zeta (1+s+u)^3} \\
& \quad -2 \frac{\zeta '(1+\beta+u) \zeta ''(1+\beta+u)
   \zeta '(1+s+u)^3}{\zeta (1+\beta+u)^2 \zeta
   (1+s+u)^3}-2 \frac{\zeta '(1+\alpha+s) \zeta ''(1+s+u) \zeta
   '(1+s+u)^3}{\zeta (1+\alpha+s) \zeta (1+s+u)^4} \\
& \quad -2\frac{
   \zeta '(1+\beta+u) \zeta ''(1+s+u) \zeta '(1+s+u)^3}{\zeta
   (1+\beta+u) \zeta (1+s+u)^4}+8 \frac{\zeta ^{(3)}(1+s+u) \zeta
   '(1+s+u)^3}{\zeta (1+s+u)^4} \\
& \quad +43\frac{
   \zeta ''(1+s+u)^2 \zeta '(1+s+u)^2}{\zeta
   (1+s+u)^4}+4 \frac{\zeta '(1+\alpha+s) \zeta '(1+\beta+u) \zeta ''(1+\alpha+s) \zeta
   '(1+s+u)^2}{\zeta (1+\alpha+s)^2 \zeta (1+\beta+u) \zeta (1+s+u)^2} \\
& \quad +4\frac{
   \zeta '(1+\alpha+s) \zeta '(1+\beta+u) \zeta ''(1+\beta+u) \zeta '(1+s+u)^2}{\zeta
   (1+\alpha+s) \zeta (1+\beta+u)^2 \zeta (1+s+u)^2} \\
& \quad +14 \frac{\zeta '(1+\alpha+s)^2
   \zeta ''(1+s+u) \zeta '(1+s+u)^2}{\zeta (1+\alpha+s)^2 \zeta
   (1+s+u)^3} \\
& \quad +14\frac{ \zeta '(1+\beta+u)^2 \zeta ''(1+s+u) \zeta
   '(1+s+u)^2}{\zeta (1+\beta+u)^2 \zeta (1+s+u)^3} \\
& \quad +40\frac{
   \zeta '(1+\alpha+s) \zeta '(1+\beta+u) \zeta ''(1+s+u) \zeta '(1+s+u)^2}{\zeta
   (1+\alpha+s) \zeta (1+\beta+u) \zeta (1+s+u)^3} \\
& \quad +7 \frac{\zeta ''(1+\alpha+s)
   \zeta ''(1+s+u) \zeta '(1+s+u)^2}{\zeta (1+\alpha+s) \zeta
   (1+s+u)^3}+7 \frac{\zeta ''(1+\beta+u) \zeta ''(1+s+u) \zeta
   '(1+s+u)^2}{\zeta (1+\beta+u) \zeta (1+s+u)^3} \\
& \quad + 8 \frac{
   \zeta '(1+\alpha+s) \zeta ^{(3)}(1+s+u) \zeta '(1+s+u)^2}{\zeta (1+\alpha+s)
   \zeta (1+s+u)^3}+8 \frac{\zeta '(1+\beta+u) \zeta
   ^{(3)}(1+s+u) \zeta '(1+s+u)^2}{\zeta (1+\beta+u) \zeta
   (1+s+u)^3} \\
& \quad -25 \frac{\zeta '(1+\alpha+s) \zeta ''(1+s+u)^2 \zeta
   '(1+s+u)}{\zeta (1+\alpha+s) \zeta (1+s+u)^3}-25 \frac{
   \zeta '(1+\beta+u) \zeta ''(1+s+u)^2 \zeta '(1+s+u)}{\zeta
   (1+\beta+u) \zeta (1+s+u)^3} \\
& \quad -2 \frac{\zeta '(1+\alpha+s) \zeta '(1+\beta+u)^2
   \zeta ''(1+\alpha+s) \zeta '(1+s+u)}{\zeta (1+\alpha+s)^2 \zeta
   (1+\beta+u)^2 \zeta(1+s+u)} \\
& \quad -2 \frac{\zeta '(1+\alpha+s)^2 \zeta '(1+\beta+u) \zeta ''(1+\beta+u)
   \zeta '(1+s+u)}{\zeta (1+\alpha+s)^2 \zeta (1+\beta+u)^2 \zeta(1+s+u)} \\
& \quad -\frac{\zeta '(1+\alpha+s)
   \zeta ''(1+\alpha+s) \zeta ''(1+\beta+u) \zeta '(1+s+u)}{\zeta (1+\alpha+s)^2 \zeta
   (1+\beta+u) \zeta(1+s+u)} \\
& \quad -\frac{\zeta '(1+\beta+u) \zeta ''(1+\alpha+s) \zeta ''(1+\beta+u) \zeta
   '(1+s+u)}{\zeta (1+\alpha+s) \zeta (1+\beta+u)^2 \zeta(1+s+u)} \\
& \quad -16 \frac{\zeta '(1+\alpha+s)
   \zeta '(1+\beta+u)^2 \zeta ''(1+s+u) \zeta '(1+s+u)}{\zeta (1+\alpha+s) \zeta
   (1+\beta+u)^2 \zeta (1+s+u)^2} \\
& \quad -16 \frac{\zeta '(1+\alpha+s)^2 \zeta '(1+\beta+u)
   \zeta ''(1+s+u) \zeta '(1+s+u)}{\zeta (1+\alpha+s)^2 \zeta (1+\beta+u) \zeta
   (1+s+u)^2} \\
& \quad +\frac{\zeta '(1+\alpha+s) \zeta ''(1+\alpha+s) \zeta ''(1+s+u) \zeta
   '(1+s+u)}{\zeta (1+\alpha+s)^2 \zeta (1+s+u)^2} \\
& \quad -8 \frac{\zeta
   '(1+\beta+u) \zeta ''(1+\alpha+s) \zeta ''(1+s+u) \zeta '(1+s+u)}{\zeta
   (1+\alpha+s) \zeta (1+\beta+u) \zeta (1+s+u)^2} \\
& \quad -8 \frac{\zeta '(1+\alpha+s) \zeta
   ''(1+\beta+u) \zeta ''(1+s+u) \zeta '(1+s+u)}{\zeta (1+\alpha+s) \zeta
   (1+\beta+u) \zeta (1+s+u)^2} \\
& \quad +\frac{\zeta '(1+\beta+u) \zeta ''(1+\beta+u) \zeta
   ''(1+s+u) \zeta '(1+s+u)}{\zeta (1+\beta+u)^2 \zeta
   (1+s+u)^2} \\
& \quad -2 \frac{\zeta '(1+\alpha+s)^2 \zeta ^{(3)}(1+s+u) \zeta
   '(1+s+u)}{\zeta (1+\alpha+s)^2 \zeta (1+s+u)^2}-2 \frac{\zeta
   '(1+\beta+u)^2 \zeta ^{(3)}(1+s+u) \zeta '(1+s+u)}{\zeta
   (1+\beta+u)^2 \zeta (1+s+u)^2} \\
& \quad -16 \frac{\zeta '(1+\alpha+s) \zeta '(1+\beta+u) \zeta
   ^{(3)}(1+s+u) \zeta '(1+s+u)}{\zeta (1+\alpha+s) \zeta (1+\beta+u) \zeta
   (1+s+u)^2} \\
& \quad -\frac{\zeta ''(1+\alpha+s) \zeta ^{(3)}(1+s+u) \zeta
   '(1+s+u)}{\zeta (1+\alpha+s) \zeta (1+s+u)^2} \\
& \quad -\frac{\zeta
   ''(1+\beta+u) \zeta ^{(3)}(1+s+u) \zeta '(1+s+u)}{\zeta
   (1+\beta+u) \zeta (1+s+u)^2}-20 \frac{\zeta ''(1+s+u) \zeta ^{(3)}(1+s+u)
   \zeta '(1+s+u)}{\zeta
   (1+s+u)^3} \\
& \quad -\frac{\zeta '(1+\alpha+s) \zeta ^{(4)}(1+s+u) \zeta
   '(1+s+u)}{\zeta (1+\alpha+s) \zeta (1+s+u)^2}-\frac{\zeta
   '(1+\beta+u) \zeta ^{(4)}(1+s+u) \zeta '(1+s+u)}{\zeta
   (1+\beta+u) \zeta (1+s+u)^2} \\
& \quad +4 \frac{\zeta ''(1+s+u)^3}{\zeta (1+s+u)^3}-\frac{2 \zeta '(1+\alpha+s)^2 \zeta
   ''(1+s+u)^2}{\zeta (1+\alpha+s)^2 \zeta (1+s+u)^2} \\
& \quad -2\frac{
   \zeta '(1+\beta+u)^2 \zeta ''(1+s+u)^2}{\zeta (1+\beta+u)^2
   \zeta (1+s+u)^2}+16 \frac{\zeta '(1+\alpha+s) \zeta '(1+\beta+u) \zeta
   ''(1+s+u)^2}{\zeta (1+\alpha+s) \zeta (1+\beta+u) \zeta
   (1+s+u)^2} \\
& \quad -\frac{\zeta ''(1+\alpha+s) \zeta ''(1+s+u)^2}{\zeta (1+\alpha+s)
   \zeta (1+s+u)^2}-\frac{\zeta ''(1+\beta+u) \zeta
   ''(1+s+u)^2}{\zeta (1+\beta+u) \zeta (1+s+u)^2} \\
& \quad +\frac{\zeta
   ^{(3)}(1+s+u)^2}{\zeta
   (1+s+u)^2}+\frac{\zeta (1+s+u) \zeta '(1+\alpha+s) \zeta '(1+\beta+u) \zeta
   ''(1+\alpha+s) \zeta ''(1+\beta+u)}{\zeta (1+\alpha+s)^2 \zeta (1+\beta+u)^2 \zeta(1+s+u)} \\
& \quad + 4 \frac{
   \zeta '(1+\alpha+s)^2 \zeta '(1+\beta+u)^2 \zeta ''(1+s+u)}{\zeta (1+\alpha+s)^2
   \zeta (1+\beta+u)^2 \zeta(1+s+u)}+2 \frac{\zeta '(1+\beta+u)^2 \zeta ''(1+\alpha+s) \zeta
   ''(1+s+u)}{\zeta (1+\alpha+s) \zeta (1+\beta+u)^2 \zeta(1+s+u)} \\
& \quad -\frac{\zeta '(1+\alpha+s) \zeta
   '(1+\beta+u) \zeta ''(1+\alpha+s) \zeta ''(1+s+u)}{\zeta (1+\alpha+s)^2 \zeta
   (1+\beta+u) \zeta(1+s+u)}+2 \frac{\zeta '(1+\alpha+s)^2 \zeta ''(1+\beta+u) \zeta
   ''(1+s+u)}{\zeta (1+\alpha+s)^2 \zeta (1+\beta+u) \zeta(1+s+u)} \\
& \quad -\frac{\zeta '(1+\alpha+s) \zeta
   '(1+\beta+u) \zeta ''(1+\beta+u) \zeta ''(1+s+u)}{\zeta (1+\alpha+s) \zeta
   (1+\beta+u)^2 \zeta(1+s+u)}+\frac{\zeta ''(1+\alpha+s) \zeta ''(1+\beta+u) \zeta
   ''(1+s+u)}{\zeta (1+\alpha+s) \zeta (1+\beta+u) \zeta(1+s+u)} \\
& \quad +2 \frac{\zeta '(1+\alpha+s)
   \zeta '(1+\beta+u)^2 \zeta ^{(3)}(1+s+u)}{\zeta (1+\alpha+s) \zeta
   (1+\beta+u)^2 \zeta(1+s+u)}+2 \frac{\zeta '(1+\alpha+s)^2 \zeta '(1+\beta+u) \zeta
   ^{(3)}(1+s+u)}{\zeta (1+\alpha+s)^2 \zeta (1+\beta+u) \zeta(1+s+u)} \\
& \quad +\frac{\zeta '(1+\beta+u)
   \zeta ''(1+\alpha+s) \zeta ^{(3)}(1+s+u)}{\zeta (1+\alpha+s) \zeta
   (1+\beta+u) \zeta(1+s+u)}+\frac{\zeta '(1+\alpha+s) \zeta ''(1+\beta+u) \zeta
   ^{(3)}(1+s+u)}{\zeta (1+\alpha+s) \zeta (1+\beta+u) \zeta(1+s+u)} \\
& \quad +5 \frac{\zeta '(1+\alpha+s)
   \zeta ''(1+s+u) \zeta ^{(3)}(1+s+u)}{\zeta (1+\alpha+s)
   \zeta (1+s+u)^2}+5 \frac{\zeta '(1+\beta+u) \zeta ''(1+s+u) \zeta
   ^{(3)}(1+s+u)}{\zeta (1+\beta+u) \zeta
   (1+s+u)^2} \\
& \quad +\frac{\zeta '(1+\alpha+s) \zeta '(1+\beta+u) \zeta
   ^{(4)}(1+s+u)}{\zeta (1+\alpha+s) \zeta (1+\beta+u) \zeta(1+s+u)}+\frac{\zeta ''(1+s+u)
   \zeta ^{(4)}(1+s+u)}{\zeta (1+s+u)^2} \bigg\}.
\end{align*}
Each of these cases is now treated as in $\mathsection$5.1 with the assistance of Lemma \ref{contourlemma} and Lemma \ref{eulermaclaurinlemma}, see below. A way to automate this process would be most welcome.

\subsection{Resuming the general case $d \ge 0$}

We shall finish our delineation of the main terms. An inspection shows that a general term of (using the compactified notation of Theorem \ref{theoremmaintermerror47})
\begin{align*}
\frac{1}{(2 \pi i)^{L_d+\barL_d}} \oint \oint \mathfrak{M}_{\alpha,\beta}(\mathbf{w},\mathbf{z};s,u)  \mathscr{D} Z \mathscr{D}W
\end{align*}
is of the form 
\begin{align} \label{generalterm}
& \frac{\zeta(1+s+u)\zeta(1+\alpha+\beta)}{\zeta(1+\alpha+s) \zeta(1+\beta+u)} \Psi(k_1, k_2, \cdots, k_{d}; l_1, l_2, \cdots, l_d; m_1, m_2, \cdots, m_{d}) \nonumber \\
& \times \bigg(\frac{\zeta'}{\zeta}(1+s+u)\bigg)^{k_1}\bigg(\frac{\zeta''}{\zeta}(1+s+u)\bigg)^{k_2} \times \cdots \times \bigg(\frac{\zeta^{(d+d)}}{\zeta}(1+s+u)\bigg)^{k_{d}} \nonumber \\
& \times \bigg(\frac{\zeta'}{\zeta}(1+\alpha+s)\bigg)^{l_1}\bigg(\frac{\zeta''}{\zeta}(1+\alpha+s)\bigg)^{l_2} \times \cdots \times \bigg(\frac{\zeta^{(d)}}{\zeta}(1+\alpha+s)\bigg)^{l_d} \nonumber \\
& \times \bigg(\frac{\zeta'}{\zeta}(1+\beta+u)\bigg)^{m_1}\bigg(\frac{\zeta''}{\zeta}(1+\beta+u)\bigg)^{m_2} \times \cdots \times \bigg(\frac{\zeta^{(d)}}{\zeta}(1+\beta+u)\bigg)^{m_{d}},
\end{align}
where $\Psi(k_1, k_2, \cdots, k_{d}; l_1, l_2, \cdots, l_d; m_1, m_2, \cdots, m_{d})$ is a constant independent of the complex variables $\alpha, \beta, s$ and $u$. The combinatorial meaning of 
\[
\mathbf{k} = \{k_1, k_2, \cdots, k_{d}\}, \quad \mathbf{l} = \{l_1, l_2, \cdots, l_d\} \quad \textnormal{and} \quad \mathbf{m} = \{m_1, m_2, \cdots, m_d\}
\]
follows from the residue calculus process and it involves partitioning the structure of $d$ and $L_d, \barL_d$. What is critically important about this is not so much the powers and coefficients, which are achievable in an elementary if ugly way as we just showed, but its structure in terms of the variables $\alpha, \beta, s$ and $u$. In $\mathsection$9, we raise the question of whether some of the combinatorial tools such as \cite[Lemma 2.5.1]{cfkrs} could be helpful in elucidating the meaning of these contour integrals. In any case, in what follows, we will need to decouple $s$ and $u$ and to do this, we shall use the Dirichlet convolution
\begin{align} \label{dconvolvedLambdas}
(\mathbf{1} \star \Lambda_1^{\star k_1} \star \Lambda_2^{\star k_2} \star \cdots \star \Lambda_{d+d}^{\star k_{d}})(n).
\end{align}
This should not be surprising as it somewhat tallies up with the structure of the coefficients in Theorem \ref{meanvalueintegral}.

Going back to \eqref{presumS} with these new coefficients and considering only the first part of the integral (i.e. the one involving $\zeta(1+\alpha+\beta)$) yields
\begin{align*}
I_{1,d}(\alpha,\beta) = \frac{T\widehat{\Phi}(0)}{(\log N)^{\sum r \ell_r} (\log N)^{\sum \barr \barell_\barr}} \sumtwo_{i,j} \frac{p_{d,\ell_d,i}p_{d,\barell_d,j}i!j!}{(\log N)^{i+j}} J_{1,d} + O(\mathcal{E}_3),
\end{align*}
where
\begin{align*}
J_{1,d} = \frac{1}{(2 \pi i)^2} \int_{(1)}\int_{(1)}  \frac{1}{(2 \pi i)^{L_d+\barL_d}} \oint \oint \mathfrak{M}_{\alpha,\beta}(\mathbf{w},\mathbf{z};s,u)  \mathscr{D} Z \mathscr{D}W N^{s+u}\frac{ds}{s^{i+1}}\frac{du}{u^{j+1}}.
\end{align*}
Deforming the path of integration of the $s$, $u$-integrals to $\real(s)=\real(u)=\delta$ with $\delta>0$, we come to see that
\begin{align*}
J_{1,d} = \frac{1}{(2 \pi i)^2} \int_{(\delta)} & \int_{(\delta)} \frac{\zeta(1+s+u)\zeta(1+\alpha+\beta)}{\zeta(1+\alpha+s) \zeta(1+\beta+u)} A_{\alpha, \beta}^{(m,n)}(\mathbf{0},\mathbf{0};s,u) \sum_{\mathbf{k},\mathbf{l},\mathbf{m}}\Psi(\mathbf{k}, \mathbf{l}, \mathbf{m}) \nonumber \\
& \times \bigg(\frac{\zeta'}{\zeta}(1+s+u)\bigg)^{k_1}\bigg(\frac{\zeta''}{\zeta}(1+s+u)\bigg)^{k_2} \times \cdots \times \bigg(\frac{\zeta^{(d+d)}}{\zeta}(1+s+u)\bigg)^{k_{d}} \nonumber \\
& \times \bigg(\frac{\zeta'}{\zeta}(1+\alpha+s)\bigg)^{l_1}\bigg(\frac{\zeta''}{\zeta}(1+\alpha+s)\bigg)^{l_2} \times \cdots \times \bigg(\frac{\zeta^{(d)}}{\zeta}(1+\alpha+s)\bigg)^{l_d} \nonumber \\
& \times \bigg(\frac{\zeta'}{\zeta}(1+\beta+u)\bigg)^{m_1}\bigg(\frac{\zeta''}{\zeta}(1+\beta+u)\bigg)^{m_2} \times \cdots \times \bigg(\frac{\zeta^{(d)}}{\zeta}(1+\beta+u)\bigg)^{m_{d}} N^{s+u} \frac{ds}{s^{i+1}}\frac{du}{u^{j+1}}.
\end{align*}
Following the recipe take $\delta \asymp L^{-1}$ and bound trivially to get $J_{1,d} \ll L^{i+j+\sum r \ell_r +\sum {\bar r} \ell_{\bar r}}$. Next, use a Taylor expansion $A_{\alpha,\beta}^{(m,n)}(\mathbf{0},\mathbf{0};s,u)=A_{\alpha, \beta}^{(m,n)}(\mathbf{0},\mathbf{0};\beta, \alpha)+ O(|s-\beta|+|u-\alpha|)$. This puts us in the following situation
%\begin{align*}
%J_{1,d} = \frac{1}{\alpha+\beta} & \frac{1}{(2 \pi i)^2} \int_{\asymp(L^{-1})}\int_{\asymp(L^{-1})} \frac{\zeta(1+s+u)}{\zeta(1+\alpha+s) \zeta(1+\beta+u)} \sum_{\mathbf{k},\mathbf{l},\mathbf{m}} \Psi(\mathbf{k},\mathbf{l},\mathbf{m}) \nonumber \\
%& \times \bigg(\frac{\zeta'}{\zeta}(1+s+u)\bigg)^{k_1}\bigg(\frac{\zeta''}{\zeta}(1+s+u)\bigg)^{k_2} \times \cdots \times \bigg(\frac{\zeta^{(d+d)}}{\zeta}(1+s+u)\bigg)^{k_{d}} \nonumber \\
%& \times \bigg(\frac{\zeta'}{\zeta}(1+\alpha+s)\bigg)^{l_1}\bigg(\frac{\zeta''}{\zeta}(1+\alpha+s)\bigg)^{l_2} \times \cdots \times \bigg(\frac{\zeta^{(d)}}{\zeta}(1+\alpha+s)\bigg)^{l_d} \nonumber \\
%& \times \bigg(\frac{\zeta'}{\zeta}(1+\beta+u)\bigg)^{m_1}\bigg(\frac{\zeta''}{\zeta}(1+\beta+u)\bigg)^{m_2} \times \cdots \times \bigg(\frac{\zeta^{(d)}}{\zeta}(1+\beta+u)\bigg)^{m_{d}} N^{s+u}\frac{ds}{s^{i+1}}\frac{du}{u^{j+1}}.
%\end{align*}
\begin{align*}
J_{1,d} = \frac{1}{\alpha+\beta}& \sum_{n \le N}  \frac{(\mathbf{1} \star \Lambda_1^{\star k_1} \star \Lambda_2^{\star k_2} \star \cdots \star \Lambda_{d+d}^{\star k_{d}})(n)}{n} \sum_{\mathbf{k},\mathbf{l},\mathbf{m}} \Psi(\mathbf{k},\mathbf{l},\mathbf{m}) L_d (\mathbf{l};\alpha,i,n) L_d (\mathbf{m};\beta,j,n),
\end{align*}
by the aid of \eqref{dconvolvedLambdas} and where
\begin{align*}
L_d (\mathbf{l};\alpha,i,n) = \frac{1}{2 \pi i} &\int_{\asymp(L^{-1})} \frac{1}{\zeta(1+\alpha+s)} \\
&\times \bigg(\frac{\zeta'}{\zeta}(1+\alpha+s)\bigg)^{l_1}\bigg(\frac{\zeta''}{\zeta}(1+\alpha+s)\bigg)^{l_2} \times \cdots \times \bigg(\frac{\zeta^{(d)}}{\zeta}(1+\alpha+s)\bigg)^{l_d} \bigg(\frac{N}{n}\bigg)^s \frac{ds}{s^{i+1}},
\end{align*}
as well as
\begin{align*}
L_d (\mathbf{m};\beta,j,n) = \frac{1}{2 \pi i} &\int_{\asymp(L^{-1})} \frac{1}{\zeta(1+\beta+u)} \\
& \times \bigg(\frac{\zeta'}{\zeta}(1+\beta+u)\bigg)^{m_1}\bigg(\frac{\zeta''}{\zeta}(1+\beta+u)\bigg)^{m_2} \times \cdots \times \bigg(\frac{\zeta^{(d)}}{\zeta}(1+\beta+u)\bigg)^{m_{d}} \bigg(\frac{N}{n}\bigg)^u \frac{du}{u^{j+1}}.
\end{align*}

We shall need one further tool from complex analysis before proceeding.

\begin{lemma} \label{contourlemma}
Suppose $0< \delta \asymp L^{-1}$, $i\geq 1$ and $\alpha \ll L^{-1}$. 
Let $N \ge n \ge 0$ and  $l_r=0,1,2,\cdots$ for $r=1,\cdots,d$. 
One then has for some  $\nu \asymp \log\log N$ that
\begin{align*}
 \frac{1}{2 \pi i} \int_{(\delta)} \bigg(\frac{N}{n}\bigg)^s \frac{1}{\zeta(1+\alpha+s)} & \bigg(\frac{\zeta'}{\zeta}(1+\alpha+s)\bigg)^{l_1}\bigg(\frac{\zeta''}{\zeta}(1+\alpha+s)\bigg)^{l_2} \times \cdots \times \bigg(\frac{\zeta^{(d)}}{\zeta}(1+\alpha+s)\bigg)^{l_d} \frac{ds}{s^{i+1}} \\
& = \Upsilon(d,\mathbf{l}) 
%+ \textcolor{red}{O(E_1)+O(E_2)},
+ O({L^{{\sum_{r=1}^{d}r l_r-2+i}}}) + O\bigg( {{{\bigg( {\frac{{{N}}}{n}} \bigg)}^{ - \nu }}{\log(N)^\varepsilon }} \bigg),
\end{align*}
where
\begin{align*}
\Upsilon(d,\mathbf{l}) := (1!(-1)^{1})^{l_1}(2!(-1)^2)^{l_2} \cdots (d!(-1)^d)^{l_d} \frac{1}{2 \pi i} \oint \frac{1}{(\alpha+s)^{1 \times l_1 + 2 \times l_2 + \cdots + d \times l_d - 1}} \bigg(\frac{N}{n}\bigg)^s \frac{ds}{s^{i+1}},
\end{align*}
%\[
%\Upsilon(d,g) = (r!(-1)^r)^{l_r} \frac{1}{2 \pi i} \oint \frac{1}{(\alpha+s)^{rl_r-d}} \bigg(\frac{N}{n}\bigg)^s \frac{ds}{s^{i+1}},
%\]
with the contour of integration being a circle of radius one centered at the origin and enclosing $-\alpha$.
\end{lemma}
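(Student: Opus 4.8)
The plan is to evaluate the contour integral by residues at $s=0$, isolating the main term and controlling the error. First I would shift the line of integration $\real(s)=\delta$ to the left, crossing the pole at $s=0$ (which has high order, coming from the $s^{-(i+1)}$ factor together with the factors $\zeta^{-1}(1+\alpha+s)$ and the powers of $\zeta^{(r)}/\zeta(1+\alpha+s)$), and pushing the new contour to a line $\real(s) = -\nu$ with $\nu \asymp \log N$ chosen so as to stay comfortably inside the classical zero-free region $\real(s) \geq 1 - c/\log(|\imag s|+2)$ of $\zeta(1+\alpha+s)$; the shift is legitimate because the integrand decays polynomially in $|\imag s|$ on account of the $s^{-(i+1)}$ with $i \geq 3$, and standard bounds for $1/\zeta$ and $\zeta^{(r)}/\zeta$ near the $1$-line (polynomial in $\log|\imag s|$) make the horizontal segments negligible. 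The integral over the far-left vertical line $\real(s) = -\nu$ is then $O((N/n)^{-\nu}\log(N)^\varepsilon)$ since $(N/n)^s$ contributes the decay factor $(N/n)^{-\nu}$ and everything else is at most $\log(N)^{O(1)}$ there, which gives the second error term in the statement.

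Next I would compute the residue at $s=0$. The key analytic input is the local expansions near $s=0$ of the zeta-quotients, namely
\begin{align*}
\zeta(1+\alpha+s) &= \frac{1}{\alpha+s} + C_0 + O(\alpha+s), \\
\frac{\zeta^{(r)}}{\zeta}(1+\alpha+s) &= \frac{(-1)^r r!}{(\alpha+s)^{r}}\big(1 + O(\alpha+s)\big),
\end{align*}
the latter following by differentiating $\zeta(1+x) = 1/x + C_0 + O(x)$ $r$ times and dividing. Hence
\begin{align*}
\frac{1}{\zeta(1+\alpha+s)}\prod_{r=1}^{d}\bigg(\frac{\zeta^{(r)}}{\zeta}(1+\alpha+s)\bigg)^{l_r} = \prod_{r=1}^{d}\big((-1)^r r!\big)^{l_r}\cdot \frac{\alpha+s}{(\alpha+s)^{\sum_r r l_r}}\big(1+O(\alpha+s)\big),
\end{align*}
so the integrand equals $\Upsilon(d,\mathbf{l})$-type data times $(\alpha+s)^{1-\sum_r r l_r}(N/n)^s s^{-(i+1)}$ plus a remainder of one higher order in $(\alpha+s)$. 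Taking the residue at $s=0$ of the leading piece reproduces exactly $\Upsilon(d,\mathbf{l})$ written as a small-circle integral (the small circle of radius one encloses both $s=0$ and, since $\alpha \ll L^{-1}$, the point $s=-\alpha$, which is why $\Upsilon$ is well-defined as stated). The residue of the $O(\alpha+s)$ correction term, together with the contributions of the Euler-$C_0$ constants, is one power of $(\alpha+s)$ smaller in the pole order; since $\alpha \ll L^{-1} \asymp \delta$ and the radius of the enclosing circle is $\asymp 1$, a direct estimate of that residue (expanding $(N/n)^s = \sum_k (s\log(N/n))^k/k!$ and reading off the coefficient) bounds it by $O(L^{\sum_r r l_r - 2 + i})$, which is the first error term in the statement. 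Summing the residue and the two error contributions yields the claimed formula.

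The main obstacle I anticipate is bookkeeping the order of the pole at $s=0$ and verifying that the subleading residue really loses exactly one power of $L$ relative to the main term: the pole order is $i+1 + \sum_r r l_r - 1$, and one must track carefully how the $O(\alpha+s)$ factors, the $C_0$ constants, and the Taylor expansion of $(N/n)^s$ interact when extracting that residue, since a naive bound would be off by a factor of $L$. A secondary technical point is making the contour shift fully rigorous uniformly in $n \le N$ and in the parameters $\alpha, \mathbf{l}$: one needs the zero-free region and the attendant bounds $1/\zeta(1+\alpha+s) \ll \log(|\imag s|+2)$ and $\zeta^{(r)}/\zeta(1+\alpha+s) \ll \log(|\imag s|+2)^{r+1}$ to hold with constants independent of these parameters, which is standard but must be invoked with care so that the horizontal and far-left pieces are genuinely absorbed into the stated error terms. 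Everything else — the residue computation itself and the passage to the small-circle representation of $\Upsilon$ — is routine residue calculus of the kind already carried out for the special cases $\ell_1=\ell_2=1,2$ earlier in the paper.
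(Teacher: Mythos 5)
Your residue extraction is essentially the paper's own argument: you collect the contributions near the origin as a single integral over a small circle enclosing both $s=0$ and $s=-\alpha$, insert the expansions $1/\zeta(1+\alpha+s)=(\alpha+s)\left(1+O(\alpha+s)\right)$ and $\zeta^{(r)}/\zeta(1+\alpha+s)=(-1)^r r!\,(\alpha+s)^{-r}\left(1+O(\alpha+s)\right)$, identify the leading piece with $\Upsilon(d,\mathbf{l})$, and argue that the $O(\alpha+s)$ correction loses one power of $L$. Two small slips there: the singularities of the factors $\zeta^{(r)}/\zeta(1+\alpha+s)$ sit at $s=-\alpha$, not at $s=0$, and $1/\zeta(1+\alpha+s)$ contributes a zero rather than a pole (your later treatment of the circle enclosing both points effectively repairs this); and the one-power-of-$L$ loss is immediate on a circle of radius $\asymp L^{-1}$, as in the paper, whereas on your circle of radius $1$ the factor $(N/n)^s$ has size $N/n$ and only the Taylor-coefficient bookkeeping you allude to saves the estimate.

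The genuine gap is the contour shift. You cannot move the line of integration to $\real(s)=-\nu$ with $\nu\asymp\log N$ while ``staying inside the classical zero-free region'': that region extends only a distance $\asymp 1/\log(|\imag s|+2)$ to the left of the $1$-line, so on your proposed line one has $\real(1+\alpha+s)\approx 1-\nu$, far outside it. There the integrand has poles at the (shifted) trivial and nontrivial zeros of $\zeta$, the shift would cross infinitely many of them, and the bounds $1/\zeta\ll\log(2+|t|)$ and $\zeta^{(r)}/\zeta\ll\log^{r}(4+|t|)$ you invoke are false; the horizontal connectors would likewise cross the critical strip. Hence the error term $O\left((N/n)^{-\nu}\log(N)^{\varepsilon}\right)$ cannot be produced by this route. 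The paper's proof instead uses a truncated contour: it shifts only to $\real(s)=-c/\log U$ on the range $|\imag s|\le U$, joins with horizontal segments at height $\pm U$, and handles $|\imag s|\ge U$ on the original line, where the factor $|s|^{-(i+1)}$ (amply sufficient since $i\ge 3$) makes the tail $\ll (\log U)^{1+\sum_r r l_r}U^{-i}$; the saving from the shifted segment is only $(N/n)^{-c/\log U}$, and one then chooses $U\asymp\log N$. Your argument needs to be rebuilt on such a modest shift plus truncation, with the bounds holding uniformly in $n\le N$, $\alpha\ll L^{-1}$ and $\mathbf{l}$.
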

\begin{proof}
The idea behind this proof is to use the standard zero-free region of $\zeta$, see for instance \cite[Lemma 6.1]{bcy}. 
Let $U$ be a large parameter with $U\to\infty$ and $U = o(T)$ as $T\to\infty$, which will be chosen at the end of the proof. 
The above integral is, by Cauchy's theorem, equal to the sum of the residues at $s=0$ and at $s=-\alpha$ plus the sum over line integrals over the segments $\gamma_1 = \{s=it \,:\, t \in \R, \; |t| \ge U\}$, $\gamma_2 = \{ s = \sigma \pm iU \, : \, -c/\log U \le \sigma \le 0\}$, and $\gamma_3 = \{ s = -c/\log U + it \, : \, |t| \le U\}$, where $c$ is some fixed positive constant such that $\zeta(1+\alpha+s)$ has no zeros in the region on the right-hand side of the contour determined by the $\gamma_i$'s. Another two requirements on $c$ are that the estimate 
\begin{align*}
\frac{1}{\zeta(\sigma+it)} \ll \log (2+|t|)
\end{align*}
holds in this region, and that
\begin{align*}
\frac{\zeta^{(j)}}{\zeta}(\sigma+it) \ll \log^j(4+|t|), \quad j=1,2,\cdots,
\end{align*}
see \cite[Theorem 6.7]{montvaug}. The below diagram \ref{gammacontours} illustrates the contour of integration.
\begin{figure}[H] \label{gammacontours}
\centering
\includegraphics[width=.35\textwidth]{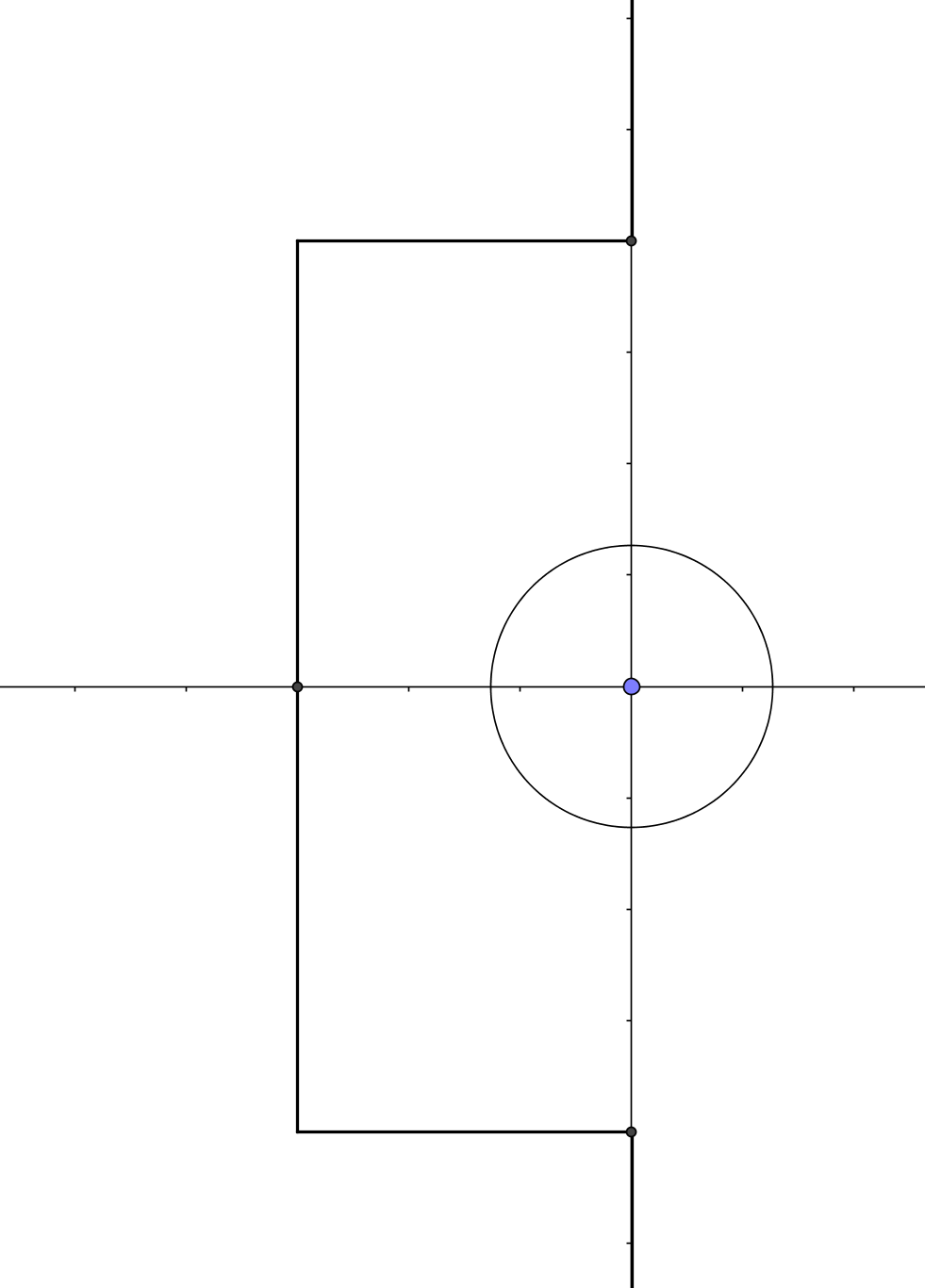}
   \put(-61,92){\mbox{$0$}}
%   \put(-93,27){\mbox{$\gamma_1$}}
   \put(-47,188){\mbox{$iU$}}
    \put(-47,30){\mbox{$-iU$}}
   \put(-149,118){\mbox{$-\frac{c}{\log U}$}}
%  \put(-23,82){\mbox{\scriptsize$\varphi$}}
%  \put(-57.8,79){\mbox{\scriptsize$\alpha$}}
 \caption{Curve $\gamma$ in the proof of Lemma~\ref{contourlemma}.}	
\end{figure}
%----------------
Then, one has
\[
\int_{\gamma_1} \ll {\int_U^\infty \frac{(\log t )^{1+\sum_{r=1}^{d}r l_r}}{t^{i+1}}\, dt} \ll
{\frac{(\log U )^{1+\sum_{r=1}^{d}r l_r}}{U^{i}}},
\]
since $i \ge 1$. Moreover, since $n\leq N$
\[
\int_{\gamma_2} \ll {\int_{-c/\log U}^0 (\log U )^{1+\sum_{r=1}^{d}r l_r} \left(\frac{N}{n}\right)^x \frac{1}{U^{i+1}} \, dx} 
\ll {\frac{ (\log U )^{\sum_{r=1}^{d}r l_r}}{U^{i+1}}} ,
\]
and finally
\begin{align*}
\int_{\gamma_3} &\ll {\int_{-U}^{U}  (\log(4+|t|))^{1+\sum_{r=1}^{d}r l_r} \frac{(N/n)^{-c/\log U}}{(c^2/\log^2 U +t^2)^{(i+1)/2}} \, dt}\\
&\ll 
(N/n)^{-c/\log U}\bigg( \int_{0}^{c/\log U} + \int_{c/\log U}^1+ \int_{1}^{U} \bigg)
 \frac{(\log(4+|t|))^{1+\sum_{r=1}^{d}r l_r}}{(c^2/\log^2 U +t^2)^{(i+1)/2}} \, dt\\
&\ll
 (N/n)^{-c/\log U} \int_{0}^{c/\log U}   \frac{(\log 5 )^{1+\sum_{r=1}^{d}r l_r}}{|c/\log U|^{(i+1)}}dt
 +
 (N/n)^{-c/\log U}\int_{c/\log U}^1    \frac{(\log 5 )^{1+\sum_{r=1}^{d}r l_r}}{t^{(i+1)}}dt\\
 & \quad +
 (N/n)^{-c/\log U} \int_{1}^U   \frac{(\log(4+U))^{1+\sum_{r=1}^{d}r l_r}}{t^{(i+1)}}dt\\
&\ll
(N/n)^{-c/\log U} ( \log^{i} U + (\log U )^{1+\sum_{r=1}^{d}r l_r}).
%(\log U )^{i+\sum_{r=1}^{d}r l_r}(N/n)^{-c/\log U} \\
%\ll
\end{align*}

Appropriately choosing $U \asymp \log N$ gives an error of size $O((\log \log N)^{i+\sum_{r=1}^{d}r l_r}) = O(\log N)$.
%-----------
The last thing we need to do is collect the residues, which we write as the contour integral 
\begin{align*}
 \frac{1}{2 \pi i} \oint_{\Omega} \bigg(\frac{N}{n}\bigg)^s \frac{1}{\zeta(1+\alpha+s)} & \bigg(\frac{\zeta'}{\zeta}(1+\alpha+s)\bigg)^{l_1}\bigg(\frac{\zeta''}{\zeta}(1+\alpha+s)\bigg)^{l_2} \times \cdots \times \bigg(\frac{\zeta^{(d)}}{\zeta}(1+\alpha+s)\bigg)^{l_d} \frac{ds}{s^{i+1}},
\end{align*}
where the contour is now a small circle $\Omega$ of radius $\asymp L^{-1}$ centered around the origin such that $-\alpha \in \Omega$. Since the radius of the circle tends to zero as $T \to \infty$, we may use the Laurent expansions around $s = -\alpha$ 
\[
\frac{1}{\zeta(1+\alpha+s)} = (\alpha+s) \left(1+ O(\alpha+s)\right),
\]
where, we recall, $C_0$ is the Euler constant, as well as
\[
\bigg(\frac{\zeta^{(r)}}{\zeta}(1+\alpha+s)\bigg)^{l_r} 
= 
\bigg(\frac{r!(-1)^r}{(\alpha+s)^r}\bigg)^{l_r} \left(1+ O(\alpha+s)\right),
\]
to finally arrive at
\begin{align*}
&\frac{1}{2 \pi i}  \oint_{\Omega} \bigg(\frac{N}{n}\bigg)^s \frac{1}{\zeta(1+\alpha+s)} \bigg(\frac{\zeta'}{\zeta}(1+\alpha+s)\bigg)^{l_1}\bigg(\frac{\zeta''}{\zeta}(1+\alpha+s)\bigg)^{l_2} \times \cdots \times \bigg(\frac{\zeta^{(d)}}{\zeta}(1+\alpha+s)\bigg)^{l_d} \frac{ds}{s^{i+1}} \\
& = \frac{1}{2 \pi i} \oint \bigg(\frac{N}{n}\bigg)^s (\alpha+s) \bigg(\frac{1!(-1)^1}{(\alpha+s)^1}\bigg)^{l_1}  \bigg(\frac{2!(-1)^2}{(\alpha+s)^2}\bigg)^{l_2} \times \cdots  \times \bigg(\frac{d!(-1)^d}{(\alpha+s)^d}\bigg)^{l_d} \left(1+ O(\alpha+s)\right) \frac{ds}{s^{i+1}}.
\end{align*}
Using a direct estimation yields that the right-hand side above is equal to 
\begin{align*}
(1!(-1)^{1})^{l_1}(2!(-1)^2)^{l_2} \cdots (d!(-1)^d)^{l_d} \frac{1}{2 \pi i} \oint \frac{1}{(\alpha+s)^{1 \times l_1 + 2 \times l_2 + \cdots + d \times l_d - 1}} \bigg(\frac{N}{n}\bigg)^s \frac{ds}{s^{i+1}},
\end{align*}
which is precisely the definition of $\Upsilon(d,\mathbf{l})$ given in the statement of the lemma.
\end{proof}

The resulting contour integral leads to three very different cases which we now need to distinguish. Set 
\begin{align} \label{defomega}
\omega(d,\mathbf{l}) := 1 \times l_1 + 2 \times l_2 + \cdots + d \times l_d - 1
\end{align}
\subsubsection{Case $A$: Terms for which $\omega(d,\mathbf{l}) = -1$} In this case we have
\begin{align*}
\Upsilon_A(d,\mathbf{l}) &= \mathcal{U}(d,\mathbf{l}) \frac{1}{2 \pi i} \oint (\alpha+s) \bigg(\frac{N}{n}\bigg)^s \frac{ds}{s^{i+1}} = \mathcal{U}(d,\mathbf{l}) \frac{d}{dx} e^{\alpha x} \frac{1}{2 \pi i} \oint \bigg(\frac{N e^x}{n}\bigg)^s \frac{ds}{s^{i+1}} \bigg|_{x=0} \\ & = \mathcal{U}(d,\mathbf{l})\frac{1}{i!}\frac{d}{dx}e^{\alpha x}\bigg(x + \log \frac{N}{n} \bigg)^i \bigg|_{x=0} = \mathcal{U}(d,\mathbf{l})\frac{\log^i N}{i!\log N}\frac{d}{dx}N^{\alpha x}\bigg(x  + \frac{\log(N/n)}{\log N} \bigg)^i \bigg|_{x=0},
\end{align*}
by the change of variable $x \to x \log N$ and where we have adopted the shorter notation
\begin{align} \label{defU}
\mathcal{U}(d,\mathbf{l}) = \mathbf{1}\{\omega(d,\mathbf{l}) = -1\} (1!(-1)^{1})^{l_1}(2!(-1)^2)^{l_2} \cdots (d!(-1)^d)^{l_d} ,
\end{align}
with
\begin{equation}
\mathbf{1}\{\omega(d,\mathbf{l}) = -1\} = \begin{cases}
1, & \mbox{ if $\omega(d,\mathbf{l}) = -1$}, \nonumber \\
0, & \mbox{ otherwise}. \nonumber 
\end{cases}
\end{equation}
We take the chance now to sum over $i$ (the index coming from the polynomial decomposition of $P_{d,\ell}$), so that
\[
\sum_i \frac{p_{d,\ell_d,i}i!}{\log^i N} \mathcal{U}(d,\mathbf{l})\frac{\log^i N}{i!\log N}\frac{d}{dx}N^{\alpha x}\bigg(x  + \frac{\log(N/n)}{\log N} \bigg)^i \bigg|_{x=0} = \frac{\mathcal{U}(d,\mathbf{l})}{\log N}\frac{d}{dx}N^{\alpha x} P_{d,\ell}\bigg(x  + \frac{\log(N/n)}{\log N} \bigg)\bigg|_{x=0}. 
\]
\subsubsection{Case $B$: Terms for which $\omega(d,\mathbf{l}) = 0$} We now deal with
\begin{align*}
\Upsilon_B(d,\mathbf{l}) &= \mathcal{V}(d,\mathbf{l}) \frac{1}{2 \pi i} \oint \bigg(\frac{N}{n}\bigg)^s \frac{ds}{s^{i+1}} = -\mathcal{V}(d,\mathbf{l}) \frac{\log^i N}{i!} \bigg(\frac{\log(N/n)}{\log N}\bigg)^i,
\end{align*}
by the use of Cauchy's integral theorem and where 
\begin{align} \label{defV}
\mathcal{V}(d,\mathbf{l}) = \mathbf{1}\{\omega(d,\mathbf{l}) = 0\}(1!(-1)^{1})^{l_1}(2!(-1)^2)^{l_2} \cdots (d!(-1)^d)^{l_d}.
\end{align}
We perform the sum over $i$ so that
\[
\sum_i \frac{p_{d,\ell_d,i}i!}{\log^i N} \mathcal{V}(d,\mathbf{l}) \frac{\log^i N}{i!} \bigg(\frac{\log(N/n)}{\log N}\bigg)^i = -\mathcal{V}(d,\mathbf{l}) P_{d,\ell}\bigg(\frac{\log(N/n)}{\log N}\bigg).
\]
\subsubsection{Case $C$: Terms for which $\omega(d,\mathbf{l}) > 0$} 
This is the most complicated case. We have
\[
\Upsilon_C(d,\mathbf{l}) = \mathcal{W}(d,\mathbf{l}) \frac{1}{2 \pi i} \oint \frac{1}{(\alpha+s)^{\omega(d,\mathbf{l})}} \bigg(\frac{N}{n}\bigg)^s \frac{ds}{s^{i+1}},
\]
where
\begin{align} \label{defW}
\mathcal{W}(d,\mathbf{l}) = \mathbf{1}\{\omega(d,\mathbf{l}) > 0\}(1!(-1)^{1})^{l_1}(2!(-1)^2)^{l_2} \cdots (d!(-1)^d)^{l_d}.
\end{align}
For the third case we employ
\begin{align} \label{tauidentity}
\int_{1/q}^1 t^{\alpha+s-1} \log^\tau t dt = \frac{(-1)^\tau \tau!}{(\alpha+s)^{\tau+1}} - \frac{q^{-\alpha-s}}{(\alpha+s)^{\tau+1}}P(s,\alpha,\log q),
\end{align}
for $\tau=0,1,2,\cdots$. Here $P$ is a polynomial in $\log q$ of degree $\tau$, see \cite[p. 56]{bcy} and \cite[p. 285]{rrz01}. Only the first term of the right-hand side above contributes when we insert this expression into $\Upsilon(d,\mathbf{l})$. This is because the contribution from the second term vanishes by taking the contour to be arbitrary large. The explicit calculation is as follows (temporarily set $\tau = \omega(d,\mathbf{l})-1$ and $q=\frac{N}{n}$):
\begin{align} 
\Upsilon_C(d,\mathbf{l}) &= \mathcal{W}(d,\mathbf{l})\frac{1}{2 \pi i} \oint (\alpha+s)^{-\omega(d,\mathbf{l})}\bigg(\frac{N}{n}\bigg)^s \frac{ds}{s^{i+1}} = \mathcal{W}(d,\mathbf{l})\frac{1}{2 \pi i} \oint \frac{1}{(\alpha+s)^{\tau+1}} \bigg(\frac{N}{n}\bigg)^s \frac{ds}{s^{i+1}} \nonumber \\
& = \mathcal{W}(d,\mathbf{l})\frac{1}{2 \pi i} \oint \bigg(\frac{N}{n}\bigg)^s \int_{1/q}^1 \frac{1}{(-1)^\tau \tau!} t^{\alpha+s-1} \log^\tau t dt \frac{ds}{s^{i+1}} \nonumber \\
& = \mathcal{W}(d,\mathbf{l})\frac{1}{(-1)^{\tau}\tau!} \int_{1/q}^1 \bigg(\frac{1}{2 \pi i} \oint (qt)^s \frac{ds}{s^{i+1}}\bigg) t^{\alpha-1} \log^\tau t dt \nonumber \\
& = \mathcal{W}(d,\mathbf{l})\frac{1}{(-1)^{\omega(d,\mathbf{l})-1}(\omega(d,\mathbf{l})-1)!} \frac{1}{i!} \int_{1/q}^1 (\log qt)^i t^{\alpha-1} (\log t)^{\omega(d,\mathbf{l})-1} dt \nonumber \\
& = \mathcal{W}(d,\mathbf{l})\frac{(-1)^{1-\omega(d,\mathbf{l})}}{i!(\omega(d,\mathbf{l})-1)!} \bigg(\log \frac{N}{n}\bigg)^{i+\omega(d,\mathbf{l})} \int_0^1 (1-a)^i a^{\omega(d,\mathbf{l})-1} \bigg(\frac{N}{n}\bigg)^{-\alpha a}da \nonumber \\
& = \mathcal{W}(d,\mathbf{l})\frac{(-1)^{1-\omega(d,\mathbf{l})}}{i!(\omega(d,\mathbf{l})-1)!} (\log N)^{\omega(d,\mathbf{l})} \bigg(\frac{\log (N/n)}{\log N}\bigg)^{\omega(d,\mathbf{l})} \bigg(\log \frac{N}{n}\bigg)^{i} \int_0^1 (1-a)^i a^{\omega(d,\mathbf{l})-1} \bigg(\frac{N}{n}\bigg)^{-\alpha a}da, \nonumber
\end{align}
by the use of the change of variables $t=q^{-a}$. Lastly we do the sum over $i$ and obtain
\begin{align*}
&\sum_i \frac{p_{d,\ell_d,i}i!}{\log^i N}\mathcal{W}(d,\mathbf{l})\frac{(-1)^{1-\omega(d,\mathbf{l})}}{i!(\omega(d,\mathbf{l})-1)!} (\log N)^{\omega(d,\mathbf{l})} \bigg(\frac{\log (N/n)}{\log N}\bigg)^{\omega(d,\mathbf{l})} \bigg(\log \frac{N}{n}\bigg)^{i} \int_0^1 (1-a)^i a^{\omega(d,\mathbf{l})-1} \bigg(\frac{N}{n}\bigg)^{-\alpha a}da \\
&= \mathcal{W}(d,\mathbf{l})\frac{(-1)^{1-\omega(d,\mathbf{l})}}{(\omega(d,\mathbf{l})-1)!} (\log N)^{\omega(d,\mathbf{l})}\bigg(\frac{\log (N/n)}{\log N}\bigg)^{\omega(d,\mathbf{l})}  \int_0^1 P_{d,\ell}\bigg((1-a)\frac{\log (N/n)}{\log N}\bigg) a^{\omega(d,\mathbf{l})-1} \bigg(\frac{N}{n}\bigg)^{-\alpha a}da.
\end{align*}

Let us now recap. We had $I_d(\alpha,\beta) = I_{1,d}(\alpha,\beta) + I_{2,d}(\alpha,\beta) = I_{1,d}(\alpha,\beta) + T^{-\alpha-\beta}I_{1,d}(-\beta,-\alpha)+O(T/L)$, where
\begin{align*}
I_{1,d}(\alpha,\beta) = \frac{T\widehat{\Phi}(0)}{(\log N)^{\sum_{r=1}^d r \ell_r} (\log N)^{\sum_{\barr=1}^d \barr \barell_\barr}} & \frac{1}{\alpha+\beta} \sum_{\mathbf{k}, \mathbf{l}, \mathbf{m}} \Psi(\mathbf{k},\mathbf{l},\mathbf{m})\\ 
& \times \sum_{n \le N} \frac{(\mathbf{1} \star \Lambda_1^{\star k_1} \star \Lambda_2^{\star k_2} \star \cdots \star \Lambda_{d+d}^{\star k_d})(n)}{n} F_d(\mathbf{l},\alpha,n) F_d(\mathbf{k},\beta,n).
\end{align*}
The terms $F_d$ are given by the three different cases
\begin{align*}
F_d(\mathbf{l},\beta,n) = \begin{cases}
\frac{\mathcal{U}(d,\mathbf{l})}{\log N}\frac{d}{dx}N^{\alpha x} P_{d,\ell}(x  + \frac{\log(N/n)}{\log N} )|_{x=0}, \\
\mathcal{V}(d,\mathbf{l}) P_{d,\ell}(\frac{\log(N/n)}{\log N}), \\
\mathcal{W}(d,\mathbf{l})\frac{(-1)^{1-\omega(d,\mathbf{l})}}{(\omega(d,\mathbf{l})-1)!} (\log N)^{\omega(d,\mathbf{l})}(\frac{\log (N/n)}{\log N})^{\omega(d,\mathbf{l})}  \int_0^1 P_{d,\ell}((1-a)\frac{\log (N/n)}{\log N}) a^{\omega(d,\mathbf{l})-1} (\frac{N}{n})^{-\alpha a}da ,
\end{cases}
\end{align*}
depending on whether $\omega(d,\mathbf{l}) = -1$, $\omega(d,\mathbf{l}) = 0$ and $\omega(d,\mathbf{l}) > 0$, respectively. Similarly
\begin{align*}
F_d(\mathbf{k},\alpha,n) = \begin{cases}
\frac{\mathcal{U}(d,\mathbf{k})}{\log N}\frac{d}{dy}N^{\beta y} P_{d,\ell}(y + \frac{\log(N/n)}{\log N} )|_{y=0}, \\
\mathcal{V}(d,\mathbf{k}) P_{d,\ell}(\frac{\log(N/n)}{\log N}), \\
\mathcal{W}(d,\mathbf{k})\frac{(-1)^{1-\omega(d,\mathbf{k})}}{(\omega(d,\mathbf{k})-1)!} (\log N)^{\omega(d,\mathbf{k})}(\frac{\log (N/n)}{\log N})^{\omega(d,\mathbf{k})}  \int_0^1 P_{d,\ell}((1-b)\frac{\log (N/n)}{\log N}) b^{\omega(d,\mathbf{k})-1} (\frac{N}{n})^{-\beta b}db,
\end{cases}
\end{align*}
depending on whether $\omega(d,\mathbf{k}) = -1$, $\omega(d,\mathbf{k}) = 0$ and $\omega(d,\mathbf{k}) > 0$, respectively. Here $\mathcal{U}, \mathcal{V}$ and $\mathcal{W}$ are given by \eqref{defomega}, \eqref{defU}, \eqref{defV}, \eqref{defW}.

So we must now cross each of all possible $3 \times 3 = 9$ terms. In fact, due to symmetries, there will only be six different cases, much like in the Euler product. 

To deal with the sum over $n$ we will need the following Euler-Maclaurin lemma.

\begin{lemma} \label{eulermaclaurinlemma}
We have
\begin{align*}
& \sum_{n \le z} \frac{(d_k \star \Lambda_1^{\star k_1} \star \cdots \star \Lambda_m^{\star k_m} )(n)}{n^{1+s}} F\bigg( \frac{\log(x/n)}{\log x} \bigg) H\bigg( \frac{\log(z/n)}{\log z} \bigg) \\
& = \frac{1^{k_1} (2!)^{k_2} \cdots (m!)^{k_m} (\log z)^{k + 1\times k_1 + \cdots m \times k_m}}{z^s(k + 1\times k_1 + \cdots m \times k_m-1)!} \int_0^1 (1-u)^{k + 1\times k_1 + \cdots m \times k_m - 1} F\bigg(1-(1-u)\frac{\log z}{\log x} \bigg) H(u)z^{us} du \\
& \quad + O\left((\log z)^{k + 1\times k_1 + \cdots m \times k_m-1}\right),
\end{align*}
for smooth functions $F$ and $H$ in the interval $[0,1]$.
\end{lemma}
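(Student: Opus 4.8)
The statement is a Perron-type asymptotic evaluation of a Dirichlet-polynomial sum weighted by two smooth cutoffs, and the natural strategy is to reduce it by Mellin inversion to the known asymptotics for partial sums of $\bigl(d_k \star \Lambda_1^{\star k_1} \star \cdots \star \Lambda_m^{\star k_m}\bigr)(n)$ and then integrate the cutoffs against the main term. First I would recall that the Dirichlet series $\sum_n (d_k \star \Lambda_1^{\star k_1}\star\cdots\star\Lambda_m^{\star k_m})(n) n^{-w}$ equals $\zeta(w)^k \prod_{r=1}^m \bigl((-1)^r \zeta^{(r)}/\zeta(w)\bigr)^{k_r}$, which near $w=1$ has a pole of order exactly $K := k + 1\cdot k_1 + 2\cdot k_2 + \cdots + m\cdot k_m$, with leading Laurent coefficient $1^{k_1}(2!)^{k_2}\cdots(m!)^{k_m}$ coming from $\zeta(w) \sim (w-1)^{-1}$ and $\zeta^{(r)}/\zeta(w) \sim (-1)^r r!\,(w-1)^{-r}$ (the signs cancel against the $(-1)^r$ in $\Lambda_r$). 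Standard contour-shifting against the classical zero-free region — exactly as in Lemma~\ref{contourlemma}, invoking the bounds $1/\zeta(\sigma+it)\ll\log(2+|t|)$ and $\zeta^{(j)}/\zeta(\sigma+it)\ll\log^j(4+|t|)$ from \cite[Theorem 6.7]{montvaug} — gives
\[
\sum_{n\le y}\bigl(d_k\star\Lambda_1^{\star k_1}\star\cdots\star\Lambda_m^{\star k_m}\bigr)(n) = \frac{1^{k_1}(2!)^{k_2}\cdots(m!)^{k_m}}{(K-1)!}\, y\,(\log y)^{K-1} + O\bigl(y(\log y)^{K-2}\bigr),
\]
and more precisely, after dividing by $n^{1+s}$ with $s\asymp L^{-1}$, one gets the smoothed version needed below.

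The second step is to insert the two cutoffs. I would write $F$ and $H$ via their Mellin–Laplace representation in the $\log$ variable: for a smooth $G$ on $[0,1]$, $G\bigl(\tfrac{\log(y/n)}{\log y}\bigr)$ is expanded (after polynomial approximation, or exactly if $G$ is a polynomial as in the application) as a combination of $(\log(y/n))^j/(\log y)^j$, each of which is $\tfrac{j!}{(\log y)^j}\,\tfrac{1}{2\pi i}\int_{(c)} (y/n)^w w^{-j-1}\,dw$. Substituting both representations and interchanging sums and integrals reduces the left-hand side to a double contour integral whose inner sum is a shifted Dirichlet series of the above type; shifting contours past $w=1$ as before, the main term comes from the order-$K$ pole and everything else is absorbed into the stated error $O\bigl((\log z)^{K-1}\bigr)$ (the loss of one logarithm relative to the main term's size is exactly what the classical zero-free region delivers, after choosing the truncation parameter $U\asymp\log z$ as in Lemma~\ref{contourlemma}). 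Collecting the residue and changing variables $n\mapsto u$ via $n = z^{1-u}$, i.e. $u = \tfrac{\log z}{\log z}\bigl(1-\tfrac{\log n}{\log z}\bigr)$ so that $\log(z/n)=u\log z$ and $\log(x/n) = \log x - (1-u)\log z$, turns the residue sum into
\[
\frac{1^{k_1}(2!)^{k_2}\cdots(m!)^{k_m}(\log z)^{K}}{z^s (K-1)!}\int_0^1 (1-u)^{K-1} F\!\Bigl(1-(1-u)\tfrac{\log z}{\log x}\Bigr) H(u)\, z^{us}\, du,
\]
which is the claimed main term; the factor $z^{-s}$ and $z^{us}$ track the $n^{-s}$ weight through the substitution.

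The main obstacle, and the step requiring genuine care rather than bookkeeping, is controlling the error uniformly in the parameters: the number of Laurent coefficients contributing below the top order grows with $K$ (hence with the $k_r$), and the lower-order residues carry sums over primes of the shape $\sum_{p\mid n}(\log p)^a/p \ll (\log\log 3n)^a$ — exactly the estimates already used in the $m$-sum and $d$-sum arguments of $\mathsection$6 — so one must check these are dominated by the single logarithm we are allowed to lose. Concretely I would verify that each residue term of order $K-1, K-2, \ldots$ contributes $O\bigl((\log z)^{K-1}\bigr)$ with an implied constant depending only on $K$, $F$, $H$ (and $\deg F$, $\deg H$), and that the horizontal and vertical contour pieces $\gamma_1,\gamma_2,\gamma_3$ contribute $O(z^{-c/\log z}\cdot(\text{polylog})) = O((\log z)^{\text{bounded}})$ after $U\asymp\log z$, which is negligible against $(\log z)^{K-1}$ once $K\ge 2$ (the cases $K=0,1$ being degenerate and handled separately). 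Since $F$ and $H$ are smooth on the compact interval $[0,1]$, polynomial approximation makes these uniformities routine but the nested sum/integral interchanges and the combinatorics of the partial-fraction decomposition of $\zeta(w)^k\prod_r(\zeta^{(r)}/\zeta(w))^{k_r}$ near $w=1$ are where the bulk of the work lies.
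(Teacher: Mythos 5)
Your route is sound, but it is genuinely different from the one the paper takes. You reduce everything to analysis of the Dirichlet series $\zeta(w)^k\prod_r\bigl((-1)^r\zeta^{(r)}/\zeta(w)\bigr)^{k_r}$ near its pole of order $K=k+\sum_r r k_r$ at $w=1$, inserting Mellin representations of the cutoffs and shifting contours through the classical zero-free region exactly as in Lemma \ref{contourlemma}; the main term then falls out of the order-$K$ residue (your identification of the leading Laurent coefficient $1^{k_1}(2!)^{k_2}\cdots(m!)^{k_m}$ and your substitution $n=z^{1-u}$, which produces the $z^{-s}z^{us}$ factors and the $(1-u)^{K-1}$ weight, are both correct). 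The paper instead argues entirely elementarily: it first proves the unweighted estimate \eqref{eq:euler_mac2} by induction, using the recurrence $\Lambda_{k+1}=\Lambda_k\log+\Lambda\star\Lambda_k$ from \eqref{arithmeticLambdaK}, partial-summation facts such as \eqref{eq:euler_mac3}--\eqref{eq:euler_mac5}, and a convolution rule \eqref{eq:euler_mac7} for two functions with summatory behaviour $cz\log^{k-1}z$, whose beta-integral identity $\int_0^1 u^{k_g-1}(1-u)^{k_f-1}\,du$ is what generates the factor $\frac{(k_f-1)!(k_g-1)!}{(k_f+k_g-1)!}$ and hence the constant $\frac{1^{k_1}(2!)^{k_2}\cdots(m!)^{k_m}}{(K-1)!}$; the weighted, smoothed statement is then obtained by the same partial-summation argument as in \cite[Lemma~2.4]{krz01}. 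The trade-off: the paper's approach needs no zero-free region, no interchange of sums and contour integrals, and no bookkeeping of lower-order Laurent coefficients, so the uniformity issues you flag simply do not arise; your approach, once those interchanges and the contributions of the subleading residues are checked (and the degenerate cases of small $K$ treated separately), has the advantage of producing the leading constant directly from the local expansion at $w=1$ and of extending more readily to situations where stronger error terms or non-polynomial weights are wanted. Your observation that the weights may be taken to be polynomials (as they are in the application) or approximated by them is the right way to make the smooth-function statement rigorous in your framework.
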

\begin{proof}
Let $g(n)$ be an arithmetic function with  
\begin{align}
\sum_{n \le z} g(n)= c_gz\log^{k_g-1}z + O(z\log^{k_g-2}z)
\label{eq:euler_mac1}
\end{align}
for some $c_g>0$ and $k_g\geq1$. Then 
\begin{align*}
\sum_{n \le z} \frac{g(n)}{n^{1+s}} F\bigg( \frac{\log(x/n)}{\log x} \bigg) H\bigg( \frac{\log(z/n)}{\log z} \bigg) 
&= \frac{c_g\log^{k_g}z}{z^s}  \int_0^1 (1-u)^{k_g - 1} F\bigg(1-(1-u)\frac{\log z}{\log x} \bigg) H(u)z^{us} du \\
 & \quad + O(\log^{k_g-1})
\end{align*}
for $F$, $H$ and $s$ as in the lemma. 
The proof of this statement is almost the same as the proof of \cite[Lemma~2.4]{krz01} and we thus omit it.
It is thus enough to show that 
\begin{align}
\sum_{n \le z} (d_k \star \Lambda_1^{\star k_1} \star \cdots \star \Lambda_m^{\star k_m} )(n)
&= \frac{1^{k_1} (2!)^{k_2} \cdots (m!)^{k_m}}{(k+1\times k_1+\ldots+m\times k_m -1)!} 
z (\log z)^{(k+1\cdot k_1+\ldots+m\cdot k_m)-1} \nonumber \\
& \quad \times \big(1+O(\log^{-1}z)\big)
\label{eq:euler_mac2}
\end{align}
To prove \eqref{eq:euler_mac2}, we need 
\begin{align}
\sum_{n \le z} g(n) \log (n)
&=
c_g z\log^{k_g}z + O(z\log^{k_g-1}z)
\label{eq:euler_mac3}\\
\sum_{n \le z} \frac{g(n) \log^\ell (n)}{n}
&=
\frac{c_g z\log^{k_g+\ell-1}z}{k_g+\ell} + O(z\log^{k_g+\ell-2}z) \nonumber \\
\sum_{n \le z} (g \star \Lambda)(n)
&=
\frac{c_g z\log^{k_g}z}{k_g} + O(z\log^{k_g-1}z)
\label{eq:euler_mac5}
\end{align}
with $g(n)$ as in \eqref{eq:euler_mac1}. 
The proof of these three equations use partial summation and we omit the details.
Finally, let $f(n)$ be an arithmetic function with  
\begin{align*}
\sum_{n \le z} f(n)= c_fz\log^{k_f-1}z + O(z\log^{k_f-2}z),
\end{align*}
then
\begin{align}
\sum_{n \le z} (g\star f)(n)= 
c_fc_g z(\log z)^{k_f+k_g-1} \frac{(k_f-1)!(k_g-1)!}{(k_f+k_g-1)!}+ O(z\log^{k_f+k_g-2}z)
\label{eq:euler_mac7}.
\end{align}
We have 
\begin{align*}
\sum_{n \le z} (g\star f)(n)
&= 
\sum_{a \le z} g(a) \sum_{z/a}f(b)
= 
\sum_{a \le z} g(a) \left(c_f\frac{z}{a}\log^{k_f-1}(z/a) + O(z\log^{k_f-2}z) \right)\\
&=
c_f z\bigg(\sum_{a \le z} \frac{g(a)}{a} \log^{k_f-1}(z/a)\bigg) + O(z\log^{k_f+k_g-2}z) \\
&=
c_f z \bigg(\sum_{j=0}^{k_f-1} \binom{k_f-1}{j}(-1)^j\sum_{a \le z} \frac{g(a)}{a} \log^{j}(a) \log^{k_f-1-j}(z)\bigg) + O(z\log^{k_f+k_g-2}z)\\
&=
%c_f z \bigg(\sum_{j=0}^{k_f-1} \binom{k_f-1}{j}(-1)^j \frac{c_g}{k_g+j} \log^{k_f+k_g-1}(z)\bigg) + O(z\log^{k_f+k_g-2}z)\\
%&=
c_fc_g z \bigg(\sum_{j=0}^{k_f-1} \binom{k_f-1}{j}(-1)^j \frac{1}{k_g+j}\bigg) \log^{k_f+k_g-1}(z) + O(z\log^{k_f+k_g-2}z).
\end{align*}
We now have 
\begin{align*}
\sum_{j=0}^{k_f-1} \binom{k_f-1}{j}(-1)^j \frac{1}{k_g+j} &= \sum_{j=0}^{k_f-1} \binom{k_f-1}{j}(-1)^j  \int_0^1 u^{k_g+j-1}du \\
& =
\int_0^1 u^{k_g-1}(1-u)^{k_f-1}du
=
\frac{(k_f-1)!(k_g-1)!}{(k_f+k_g-1)!}.
\end{align*}
The last equality can be proved for instance by induction over $k_f$, which completes the proof of \eqref{eq:euler_mac7}.
We now can show that we have for each $k\geq 1$
\begin{align*}
\sum_{n\leq z} \Lambda_k(n) = k z \log^{k-1}z + O(z \log^{k-2}z ).
\end{align*}
We prove this equation by induction. 
We have $\Lambda_1= \Lambda$ and thus the case $k=1$ is trivial. 
We thus assume the statement holds for a given $k$. 
We use \eqref{arithmeticLambdaK} together with \eqref{eq:euler_mac3} and \eqref{eq:euler_mac5} and get
\begin{align*}
\sum_{n\leq z} \Lambda_{k+1}(n) 
&= 
\sum_{n\leq z} \Lambda_{k}(n) \log n
+
\sum_{n\leq z} (\Lambda_{k}\star \Lambda)(n)
=
k z \log^{k}z
+
k \frac{z \log^{k}z}{k} + O(z \log^{k-1}z) \\
&=
(k+1) z \log^{k}z + O(z \log^{k-1}z ).
\end{align*}
In a similar way, one can show that 
\begin{align*}
\sum_{n\leq z} d_k(n) =  \frac{z \log^{k-1}z}{(k-1)!} + O(z \log^{k-2}z ).
\end{align*}
We now can prove \eqref{eq:euler_mac2} and thus complete the proof of the lemma. 
We argue by induction. The case $k_1=k_2=\ldots=k_m=0$ follows immediately from \eqref{eq:euler_mac7}.
Suppose now that \eqref{eq:euler_mac2} for given $k_1$, $k_2$, \ldots $k_m$.
We then get with \eqref{eq:euler_mac7}
\begin{align*}
\sum_{n \le z} (d_k \star \Lambda_1^{\star k_1} \star \cdots \star \Lambda_m^{\star {k_m+1}} )(n)
&=
\sum_{n \le z} ((d_k \star \Lambda_1^{\star k_1} \star \cdots \star \Lambda_m^{\star {k_m}})\star \Lambda_m )(n)\\
&=
z (\log z)^{(k+1\cdot k_1+\ldots+m\cdot k_m)+k_m-1}\left(\frac{1^{k_1} (2!)^{k_2} \cdots (m!)^{k_m}}{(k+1\cdot k_1+\ldots+m\cdot k_m-1)!}\right) k_m  \\
& \quad \times\frac{\big((k+1\cdot k_1+\ldots+m\cdot k_m)-1\big)!(k_m-1)!}{\big((k+1\cdot k_1+\ldots+m\cdot k_m)+k_m-1 \big)!} .
\end{align*}
This ends the proof.
\end{proof}

All but one of the $\log N$'s in $I_{1,d}$ will cancel with the $\log N$'s coming from the Euler-Maclaurin expression leaving us only with $\frac{1}{\log N}$ in $I_{1,d}$.

The last step is to apply the differential operators $Q$ on $I_{1,d}(\alpha,\beta)+T^{-\alpha-\beta}I_{1,d}(-\beta,-\alpha)$
\begin{align*}
I_d = Q \bigg(\frac{-1}{\log T} \frac{\partial}{\partial \alpha}\bigg) Q \bigg(\frac{-1}{\log T} \frac{\partial}{\partial \beta}\bigg) [I_{1,d}(\alpha,\beta)+T^{-\alpha-\beta}I_{1,d}(-\beta,-\alpha)] \bigg|_{\alpha=\beta=-R/L}.
\end{align*}

From the above procedure a term of the form $\log N$ (such as $\log (N^{x+y}T)$ for example) will come out. This term will be combined with the term $\frac{1}{\log N}$ above to produce an expression like $\frac{\log (N^{x+y}T)}{\log N} = \frac{\theta(x+y)+1}{\theta}$, thereby removing the $\log T$ dependence. For the terms involving derivatives of the arithmetical factor $A_{\alpha,\beta}$, it can be seen that the resulting expression after the application of the two $Q$'s will be of the form $\frac{\log N}{\log^2 N} \asymp \frac{1}{\log T}$, thereby producing error terms of size $\frac{T\widehat{\Phi}(0)}{\log T} \ll T/L$.

This ends our delineation of the final main terms.

\section{Numerical aspects}

The below numerical calculations are similar to the ones that appeared in the older version of Feng's paper on the \texttt{arXiv}. Feng obtains $\kappa \ge 0.417288$ which was rounded up to $0.4173$. Since we were never able to achieve a $\frac{4}{7}$ length in the past, we had to rely on our code of the main terms (which matched Feng's). However we are now able to reach $\frac{4}{7}$, we are only too happy to give credit to Feng and recover part of his numerical setup. Let us set $d=1$ and $K=3$ in \eqref{fengsmollifierlogp} as well as
\[
\theta = \frac{4}{7}-\varepsilon \quad \textnormal{and} \quad R=1.3036.
\]
Moreover, we slightly increase the number of terms of the polynomials $P$ of the mollifiers (Feng only used $\deg P_{1}=4$, $\deg P_{2}=2$ and $\deg P_{3}=1$) and take
\begin{align*}
P_{1}(x) &=   x + 0.261076 x (1 - x) - 
 1.071007 x (1 - x)^2 - 0.236840 x (1 - x)^3 + 
 0.260233 x (1 - x)^4, \\
P_{2}(x) &= 1.048274 x + 1.319912 x^2 - 
 0.940058 x^3, \\
P_{3}(x) &= 0.522811 x - 0.686510 x^2 - 
 0.049923 x^3,
\end{align*}
as well as
\begin{align*}
Q(x) = 0.490464 + 
 0.636851 (1 - 2 x) - 0.159327 (1 - 2 x)^3 + 
 0.032011(1 - 2 x)^5.
\end{align*}
Letting $\varepsilon \to 0$ yields $\kappa \ge 0.417293962$. Using only a linear polynomial in $Q$ yields the proportion of simple zeros on the critical line, see \cite{anderson, heathbrownsimple, levinsoncollected}. Taking 
\begin{align*}
P_{1}(x) &= x + 0.052703 x (1 - x) - 
 0.657999 x (1 - x)^2 - 
 0.003193 x (1 - x)^3 - 0.101832 x (1 - x)^4 \\
P_{2}(x) &= 1.049837 x  -0.097446 x^2 \\
P_{3}(x) &= 0.035113 x - 0.156465 x^2,
\end{align*}
as well as 
\begin{align*}
Q(x) = 0.483777 + (1 - 0.483777) (1 - 2 x);
\end{align*}
along with $R=1.1167$, $\theta=\frac{4}{7}$, $d=1$ and $K=3$ and letting $\varepsilon \to 0$ yields $\kappa^* \ge 0.407511457$.

\section{Further remarks and future work}

We end our paper with some questions and discussions on how the methodology we have presented could be used in future work.
\begin{enumerate}
\item[(1)] Naturally, the most pressing question is whether these ideas can be applied to other $L$-functions in the Selberg class. Along with this comes their associations with compact groups.
\item[(2)] Equally important would be the effect of these mollification refinements on other arithmetical objects such as discrete moments, logarithmic moments, $k$-th moments, and pair correlations as illustrated in e.g. \cite{cs}.
\item[(3)] In the proof of Theorem \ref{meanvalueintegral}, it was important to exploit the additivity of the logarithm in $a_n \sim (\mu \star \log)(n)$. This allowed us to use Vaughan and Heath-Brown identities \cite{heathbrown, vaughanidentity} and get $\theta = \frac{4}{7}-\varepsilon$. Moreover, in \cite{bcr} it is shown that the best size one can take for coefficients $a_n$ about which we know nothing (other than $a_n \ll_\varepsilon n^{\varepsilon}$) is $\theta = \frac{17}{33}-\varepsilon$. It is an interesting question to ask how one can increase the length of the Dirichlet polynomials without specializing the coefficients $a_n$ too much.
\item[(4)] As mentioned in $\mathsection$1.3, the mixing of \eqref{bcy} and the Feng mollifier ($d=1$) is technically difficult. It is possible to obtain a better result for $\kappa$ by working with a mollifier of the form
\begin{align*}
\psi(s) &= \sum_{\ell=0}^K (-1)^\ell \sum_{\ell_1 + \ell_2 + \cdots + \ell_d = \ell} \binom{\ell}{\ell_1, \ell_2, \cdots, \ell_d} \nonumber \\
& \quad \times \sum_{n \le y_d} \frac{n^{\sigma_0-1/2}}{n^s} \frac{(\mu \star \Lambda^{\star \ell_1} \star \Lambda_2^{\star \ell_2} \star \cdots \star \Lambda_d^{\star \ell_d}) (n)}{(\log y_d)^{\sum_{r=1}^d r \ell_r}} P_{d,\ell} \bigg( \frac{\log(y_d/n)}{\log y_d} \bigg) \\
& +  \chi(s + \tfrac{1}{s}-\sigma_0) \sum_{hk \le y_2} \frac{\mu_2(h)h^{\sigma_0-1/2}k^{1/2-\sigma_0}}{h^sk^{1-s}}P_2 \bigg(\frac{\log (y_2/hk)}{\log y_2}\bigg).
\end{align*}
If necessary, then further pieces from \cite{krz02, sono} may be brought in. This would qualify as a painful calculation. Moreover, it would also be interesting to set $d=1$ and analyze how a large truncation of $\ell$ could yield terms from a higher degree $d \ge 2$. What would be a good balance between the degree $d$ and the truncation $\ell$?
\item[(5)] Is there a way to find a better expression (if possible a comfortable one such as \cite[Lemma 2.5.1]{cfkrs}) for the contour integrals that yield the logarithmic derivatives of $\zeta$? See for instance the end of $\mathsection$6.2 and $\mathsection$7.1. Another way to re-write $\mathfrak{M}_{\alpha,\beta}$ in \eqref{generalintegrandM} is
\begin{align*}
\quad \quad \quad \mathfrak{M}_{\alpha,\beta}(\mathbf{z},\mathbf{w};s,u) & = (-1)^{1 \times \ell_1 + 2 \times \ell_2 + \cdots d \times \ell_d}(-1)^{1 \times \barell_1 + 2 \times \barell_2 + \cdots d \times \barell_d} \\
& \quad \times \frac{\zeta(1+s+u)^{(L_d+1)(\barL_d+1)}\zeta(1+\alpha+\beta)}{\zeta(1+\beta+u)^{\barL_d+1} \zeta(1+\alpha+s)^{L_d+1}} \\
& \quad \times \bigg(\prod_{q=1}^d \prod_{i=1}^{\ell_q} \frac{\partial^q}{\partial z_{q,i}^q}\bigg) \bigg(\prod_{\barq=1}^d \prod_{j=1}^{\barell_\barq} \frac{\partial^\barq}{\partial w_{\barq,j}^\barq}\bigg) \bigg\{ \bigg(\prod_{q=1}^d \prod_{i=1}^{\ell_q} \prod_{\barq = 1}^{d} \prod_{j=1}^{\barell_\barq} \zeta(1+s+u+w_{q,i}+z_{\barq,j})\bigg) \\
& \quad \times \bigg(\prod_{q=1}^d \prod_{i=1}^{\ell_q} \frac{\zeta(1+\alpha+s+z_{q,i})}{\zeta^{\barL_d + 1}(1+s+u+z_{q,i})}\bigg) \bigg(\prod_{\barq=1}^d \prod_{j=1}^{\barell_\barq} \frac{\zeta(1+\beta+u+w_{\barq,j})}{\zeta^{L_d+1}(1+s+u+w_{\barq,j})}\bigg)\\ 
& \quad \times  A_{\alpha,\beta}(\boldsymbol{z}, \boldsymbol{w}, s, u) \bigg\}\bigg|_{\boldsymbol{z}= \boldsymbol{w} = 0}.
\end{align*}
Some attempts to obtain a closed formula for this problem indicate that it is much too cumbersome and that the easiest way is to proceed with mathematical software.
\item[(6)] In \cite{fengwu}, Feng and Wu used the $d=1$ version of a close variant of $\psi_d$ to show that infinitely often consecutive non-trivial zeros of the Riemann zeta-function differ by at least 2.7327 times the average spacing, and infinitely often they differ by at most 0.5154 times the average spacing, under RH. In other words, if 
\begin{align*}
\lambda = \limsup (\gamma-\gamma')\frac{\log \gamma}{2\pi} \quad \textnormal{and}\quad \mu = \liminf (\gamma-\gamma')\frac{\log \gamma}{2\pi},
\end{align*}
then, under RH, they prove that $\lambda > 2.7327$ and $\mu < 0.5154$. See also \cite{bui2, buimilng} among others. Since $\psi_{d=1}$ is a special case of a wider general family, can one improve the results of Feng and Wu with a higher degree $d$? In a recent paper of Bui and Milinovich \cite{buimilinovich}, these bounds are improved. The question remains whether the technique could be useful. Very recently, Goldston and Turnage-Butterbaugh \cite{gtb} have assumed RH and improved results in this direction. In particular using the weights developed by Wu (which have their root in Feng's mollifier), they have shown that there are infinitely many zeros of the zeta function whose differences are smaller than $0.50412$ times the average space. 
\item[(7)] In \cite{cis1, cis2}, the asymptotic large sieve is used along with Levinson's method to obtain lower bounds for the proportion of simple zeros on the critical line of the twists by primitive Dirichlet characters of a fixed $L$-function of degree $1$, $2$, or $3$. For a certain family of Dirichlet $L$-functions, Conrey, Iwaniec and Soundararajan prove that at least $56\%$ of the zeros are on the critical line and are simple. For aesthetic reasons, it would be desirable to increase this proportion to more than $\frac{3}{5}$. This would necessitate an analysis of more complicated mollifiers. The asymptotic large sieve technology would not apply directly, and additional difficulties would arise.
\end{enumerate}

\section{Acknowledgments}
The authors would like to acknowledge Matthew Faust of the Illinois Geometry Lab project for helping them write the code that produced their results as well as Siegfried Baluyot for proofreading an earlier version of this manuscript. Moreover, the authors would like to thank Hung Bui and Arindam Roy for useful discussions.

For part of this work the first author was supported by NSF grant DMS-1501982.

The authors are extremely grateful to the anonymous referees for their meticulous checking, for thoroughly reporting countless typos and inaccuracies as well as for their valuable comments. These corrections and additions have made the manuscript clearer and more readable.
%%%%%%%%%%%%%%%%%%%%%%%%%%%%%%%%%%%%%%%%%%%%%%%%%%%%%%%%%%%%%%%%%%%%%

\end{document}